\def\R{{\mathbb{R}}}
\def\N{{\mathbb{N}}}
\definecolor{darkgreen}{rgb}{0.1, .6, 0.}
\newcommand{\Prox}[2]{\mathsf{Prox}_{#1}\left(#2\right)}
\DeclareMathOperator*{\argmin}{arg\,min}
\crefname{hypothesis}{Hypothesis}{Hypotheses}
\crefname{fact}{Fact}{Facts}
\crefname{hypothesis}{Example}{Example}
\title{On the Moreau envelope properties of weakly convex functions\thanks{Submitted to the editors DATE.
\funding{This study has been carried out with financial support from the French Direction G\'en\'erale de l’Armement. Experiments presented in this paper were carried out using the PlaFRIM experimental testbed, 
supported by Inria, CNRS (LABRI and IMB), Universite de Bordeaux, Bordeaux INP and Conseil Regional d’Aquitaine (see https://www.plafrim.fr)}}}
\title{
On the Moreau envelope properties of weakly convex functions
}
\author{
  Marien Renaud\thanks{
  Univ. Bordeaux, CNRS, INRIA, Bordeaux INP, IMB, UMR 5251, F-33400 Talence, France (\email{marien.renaud@math.u-bordeaux.fr})}
  \and
   Arthur Leclaire\thanks{LTCI, T\'el\'ecom Paris, IP Paris, France}
   \and
   Nicolas Papadakis\footnotemark[1]}
\begin{document}

\maketitle

\begin{abstract}
    In this document, we present the main properties satisfied by the Moreau envelope of weakly convex functions. The Moreau envelope has been introduced in convex optimization to regularize convex functionals while preserving their global minimizers. However, the Moreau envelope is also defined for the more general class of weakly convex function and can be a useful tool for optimization in this context. The main properties of the Moreau envelope have been demonstrated for convex functions and are generalized to weakly convex function in various works. This document summarizes the vast literature on the properties of the Moreau envelope and provides the associated proofs.
\end{abstract}

\begin{keywords}
Optimization, non convex, Moreau envelope
\end{keywords}

\begin{MSCcodes}
49J52, 26B25, 46N10.
\end{MSCcodes}

\section{Introduction}


This document focuses on functions satisfying (relaxed) convexity assumptions.
\begin{definition}\label{def:cvx_def}
A function $f$ is called
\begin{itemize}
    \item \textit{convex} if and only if  $\forall x,y \in \R^d$, $\forall \lambda \in [0,1]$, 
    \begin{align}\label{eq:convexity}
 f(\lambda x + (1-\lambda)y) \le \lambda f(x) + (1-\lambda)f(y) ,
 \end{align}
    \item \textit{$\rho$-weakly convex}, with $\rho \in \R$, if and only if $f + \frac{\rho}{2}\|\cdot\|^2$ is convex.
    \item \textit{$\mu$-strongly convex}, with $\mu > 0$, if and only if $f$ is $-\mu$-weakly convex.
\end{itemize}
\end{definition}


Weakly convex functions have been first studied in~\cite{nurminskii1973quasigradient}, and are also called semi-convex functions~\cite{crandall1992user, johnston2025performance}. 
The class of weakly convex function is large as it includes convex functions and smooth functions, i.e. functions with Lipschitz gradient. More generally, the authors of~\cite[Lemma 4.2]{drusvyatskiy2019efficiency} show that any function of the form $h \circ c$, with $h$ convex and $c$ a smooth map with Lipschitz Jacobian is weakly convex. Note that such a composition is not necessary convex or smooth.

Many applied optimization problems can be reformulated using weakly convex functions, including phase retrieval~\cite{davis2020nonsmooth}, minimization of the conditional value-at-risk~\cite{rockafellar2000optimization, ben1986expected}, graph synchronization~\cite{abbe2014decoding} or robust principal component analysis~\cite{candes2011robust}. We refer to~\cite{drusvyatskiy2017proximal} for a survey on these applications.
More recently, weakly convex functions play a central role in modern techniques for regularizing inverse problems with deep neural networks~\cite{ebner2025error,goujon2024learning,hurault2024convergent,hurault2022proximal,kamilov2023plug,rafique2022weakly,renaud2025stability,savanier2025learning, Shumaylov24, tan2024provably}. 
Since they are non-convex, minimizing weakly convex functions remains a challenging task. A thorough understanding of their properties is therefore essential for effective optimization, especially when using proximal methods.

The Moreau envelope, also called Moreau-Yosida envelope, was first introduced in the seminal work of J.-J.  Moreau~\cite{moreau1965proximite} to regularize (non-differentiable) convex functions, thereby facilitating their optimization.  
In another context, such an envelope was introduced by K. Yosida in~\cite[Chapter IX, Section 4, page 240]{Yosida1964} to construct smooth approximations of the infinitesimal generator of a semi-group.
Detailed explanation on the Moreau envelope for convex functions can be found in~\cite[Part 1.G]{rockafellar2009variational}. 
The Moreau envelop has been used in many applications for its smoothing properties in optimization~\cite{bohm2021variable,sun2023algorithms,hu2023non,chen2025stochastic} or in langevin diffusion~\cite{pereyra2016proximal,durmus2018efficient,luu2021sampling, renaud2025stability,crucinio2025optimal,habring2025diffusion}.

The Moreau envelope regularizes in the sense that it is always differentiable (Proposition~\ref{prop:nabla_g_weakly_cvx_appendix}) and preserves the minimal value and minimizers of the original function (Proposition~\ref{prop:same_argmin} and Lemma~\ref{lemma:caracterization_critical_point_moreau_env}). However, it is not clear if the Moreau envelope "convexifies" the original function. Some results (Lemma~\ref{lemma:moreau_envelop_weakly_convex}  and Lemma~\ref{lemma:strong_convexity_moreau_envelop}) indeed show that the Moreau envelope degrades both weak and strong convexity constants, whereas another one (Lemma~\ref{lemma:measure_of_convexity}) suggests that the Moreau envelope improves the convexity by non-increasing the non-convexity criteria~\eqref{eq:nc_criteria}.

The results presented in this document are known by experts in the field. However, to our knowledge, there is no single reference that explicitly states and proves all of them, 
especially for {\bf weakly convex functions}. Different proofs can be found in~\cite{bauschke2017correction,gribonval2020characterization, hoheiselproximal, junior2023local, moreau1965proximite, rockafellar2009variational}. 
Building on the work initiated in~\cite[Appendix D]{renaud2025stability}, we provide here proofs that rely on elementary tools. For simplicity, we choose not to introduce the Clarke sub-differential~\cite{clarke1983nonsmooth} which can be used to analyze the Moreau envelope of non-differentiable functions. 
Nevertheless the results involving the differentiability assumption (Lemma~\ref{lemma:more_convex_conjuguate_properties}(ii) and (iii), Proposition~\ref{prop:notation_prox_operator} and Lemma~\ref{lemma:same_critical_points}) can be generalized to non-differentiable functions using the Clarke sub-differential and following the same proof techniques.
We also refer to~\cite{jourani2014differential} for refined results on the regularity of the Moreau envelope in the non-convex and non-smooth setting and to~\cite{hiriart2021regularisation} for a pedagogical introduction to the Moreau envelope in the convex case.

\paragraph{Overview of the document}
First, in section~\ref{sec:def}, we define the Moreau envelope $g^\gamma$ and give some first immediate properties. In section~\ref{sec:dependence_gamma_moreau_envelop}, we analyze the dependency of $g^\gamma$ in the parameter $\gamma$. In section~\ref{sec:link_moreau_conjugate}, we show the intrinsic link between the Moreau envelope and the convex conjugate, as well as a duality result (Proposition~\ref{prop:dual_result_prox}). In section~\ref{sec:cvx_weakl_cvx_properties_moreau}, we investigate how the Moreau envelope preserves the convexity, the weak convexity and the strong convexity. In section~\ref{sec:properties_prox_operator}, we derive the main properties of the proximal operator for weakly convex functions. In section~\ref{sec:moreau_diff_optim}, we prove that the Moreau envelope preserves critical points and minimizers, with the advantage of being differentiable. Finally, in section~\ref{sec:second_derivation_mro_env_imgae_prox}, the second derivative of the Moreau envelope is analyzed and we prove that the image of the proximal operator is almost convex.

\section{Definitions and first properties}\label{sec:def}
In this section, we present the hypotheses satisfied by the functions under consideration and recall some of their useful properties. We then introduce the Moreau envelope and the proximal operator, which are the main technical tools used throughout this document.

For a function $f : \R^d \mapsto \R \cup \{+\infty\}$, we define $\text{dom}(f) = \{x \in \R^d | f(x) \in \R\}$. A function $f$ is said to be \textit{proper} if and only if $\text{dom}(f) \neq \emptyset$. A function $f$ is said to be \textit{lower semicontinuous} if and only if $\forall x \in \R^d, \liminf_{y \to x}{f(y)} \ge f(x)$. In the following, we will suppose that the functions are proper and lower semicontinuous.

We will mainly focus on the class of weakly convex functions, which notably includes the class of functions with Lipschitz continuous gradients. Indeed, any $L$-smooth function $f$, \textit{i.e.} such that $\forall x, y \in \R^d$, $\|\nabla f(x) - \nabla f(y)\| \le L \|x -y \|$, is $L$-weakly convex. 

Let us now recall some useful properties satisfied by functions  satisfying relaxed convexity assumptions. 
As a generalization of the convexity inequality~\eqref{eq:convexity}, if $f$ is $\rho$-weakly convex then $\forall x, y \in \R^d$, $\forall \lambda \in [0,1]$, we have
\begin{align}\label{eq:ineq_wk_cvx}
    f(\lambda x + (1 - \lambda)y) \le \lambda f(x) + (1 - \lambda)f(y) + \frac{\rho}{2} \lambda (1 - \lambda)\|x - y\|^2.
\end{align}

Next, if $f$ is $\rho$-weakly convex and differentiable, then $\forall x, y \in \R^d$
\begin{align}\label{eq:ineq_wk_cvx_diff}
    \langle x - y, \nabla f(x) - \nabla f(y) \rangle \ge - \rho \|x - y\|^2.
\end{align}

Finally, if  a function $f$ is $\rho$-weakly convex and twice differentiable then $\forall x, y \in \R^d$, 
\begin{align}\label{eq:ineq_wk_cvx_2_diff}
    x^T \nabla^2 f(y) x \ge -\rho \|x\|^2.
\end{align}
Due to Definitions~\ref{def:cvx_def}, note that convex functions are $0$-weakly convex functions and $\mu$-strongly convex functions are $-\mu$-weakly convex ones. Therefore, the classes of convex and strongly convex functions are  included in the class of weakly convex functions,  and the three above inequalities hold for $\rho=0$ (convex functions) or $\rho=-\mu$ ($\mu$-strongly convex functions).

We now introduce the notion of inf-convolution~\cite{moreau1970inf} as well as the Moreau envelope~\cite{moreau1965proximite}.

\begin{definition}
The inf-convolution between two functions $f, g : \R^d \to \R$ is defined for $x \in \R^d$ by
\begin{align}\label{eq:inf_conv}
    (f \square g)(x) = \inf_{y \in \R^d} f(y) + g(x-y),
\end{align}
with the convention $(f \square g)(x) = -\infty$ if the right-hand-side of equation~\eqref{eq:inf_conv} is not bounded from below.
\end{definition}

\begin{definition}
For $\gamma > 0$, the Moreau envelope of $g : \R^d \to \R$, noted $g^{\gamma}$ is defined for $x \in \R^d$ by
\begin{align}\label{eq:moreau_env}
    g^{\gamma}(x) = \left(g \square \frac{1}{2\gamma} \|\cdot\|^2\right)(x) = \inf_{y \in \R^d} \frac{1}{2\gamma} \|x - y\|^2 + g(y).
\end{align}
\end{definition}
The Moreau envelope is finite on $\R^d$ if $g$ is $\rho$-weakly convex, \textit{i.e.} $\rho > 0$ and $g +\frac{\rho}{2}\|\cdot\|^2$ is convex, with $\gamma \rho < 1$. In fact, the function $y \mapsto \frac{1}{2\gamma} \|x - y\|^2 + g(y)$ is $\left(\frac{1}{\gamma} - \rho \right)$ strongly convex so it admits a unique minimum for $\gamma \rho < 1$. The minimal point is called the proximal point 
\begin{align}\label{eq:prox}
\Prox{\gamma g}{x}= \argmin_{y \in \R^d} \frac{1}{2\gamma} \|x - y\|^2 + g(y). 
\end{align}
We thus have
\begin{align}\label{eq:moreau_prox}
g^\gamma(x)=g(\Prox{\gamma g}{x}) + \frac{1}{2\gamma} \|x - \Prox{\gamma g}{x}\|^2.
\end{align}

The Moreau envelope is an approximation of $g$ in the sense that $\forall x \in \R^d$, $\lim_{\gamma \to 0} g^\gamma(x) = g(x)$ (see Proposition~\ref{prop:moreau_envelop_dependence_in_gamma}) and it is a regularization as $g^\gamma$ is differentiable (see Proposition~\ref{prop:nabla_g_weakly_cvx_appendix}) even if $g$ is not.

It is important to note that $g$ and $g^\gamma$ have the same minimal value as
\begin{align*}
    \inf_{x \in \R^d} g(x) = \inf_{x, y \in \R^d} g(x) + \frac{1}{2\gamma} \|x - y\|^2 = \inf_{y \in \R^d} g^\gamma(y).
\end{align*}
We will also show in Proposition~\ref{prop:same_argmin} that $g$ and $g^\gamma$ have the same minimizers. Therefore, the Moreau envelope is particularly interesting for optimization. We can minimize the differentiable function $g^\gamma$ instead of the possibly non-differentiable $g$~\cite{yamada2011minimizing}.

\begin{example}\label{ex1}
For $g(x) =  -\frac{\rho}{2} \|x\|^2$ with $\rho \in \R$, and $\gamma > 0$ such that $\gamma \rho < 1$, we have $g^{\gamma}(x) = -\frac{\rho}{2(1 - \gamma \rho)}\|x\|^2$.

In particular, if $\mu = -\rho > 0$ and $g(x) = \frac{\mu}{2}\|x\|^2$, then $\forall \gamma > 0$, we have $g^{\gamma}(x) = \frac{\mu}{2(1 - \gamma \mu)}\|x\|^2$.
\end{example}

\section{On the dependency of \texorpdfstring{$g^\gamma$}{g gamma} w.r.t. \texorpdfstring{$\gamma > 0$}{gamma positive}}\label{sec:dependence_gamma_moreau_envelop}
The following result shows the monotonicity of $g^\gamma$ in $\gamma$ and the fact that the Moreau envelope is an approximation of $g$ when $\gamma \to 0$. Moreover, we show that $g^\gamma$ is differentiable w.r.t. $\gamma$ and gives the value of the derivative.

First, to analyze this dependency, we need a technical lemma, that will be useful for other properties (see Lemma~\ref{lemma:expansivity_prox}).
\begin{lemma}\label{lemma:technical_lemma_prox}
If $g$ is $\rho$-weakly convex function with $\gamma \rho < 1$, for $x, y \in \R^d$ and $p = \textsf{Prox}_{\gamma g}(x)$, we have
\begin{align*}
    g(p) + \frac{1}{\gamma} \langle x- p , y - p  \rangle \le g(y) + \frac{\rho}{2} \|y - p\|^2.
\end{align*}
\end{lemma}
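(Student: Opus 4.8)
The plan is to read the claimed inequality as a first-order optimality condition for the proximal point, phrased without subgradients. Set $\phi(y) = \frac{1}{2\gamma}\|x-y\|^2 + g(y)$, so that $p = \Prox{\gamma g}{x}$ is, by \eqref{eq:prox}, the global minimizer of $\phi$. As already observed after \eqref{eq:prox}, the $\rho$-weak convexity of $g$ together with $\gamma\rho<1$ makes $\phi$ strongly convex with modulus $m \eqdef \frac{1}{\gamma}-\rho>0$; equivalently, $\phi$ is $(-m)$-weakly convex.

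The core step, which I expect to be the main (if short) obstacle given that the paper deliberately avoids the Clarke subdifferential, is to turn the minimality of $p$ into the quadratic lower bound
\[
\phi(y) \ge \phi(p) + \frac{m}{2}\|y-p\|^2 \qquad \text{for all } y\in\R^d.
\]
I would derive this purely from inequality \eqref{eq:ineq_wk_cvx} applied to $\phi$ with constant $-m$: evaluating it at the convex combination $\lambda y + (1-\lambda)p$ gives $\phi(\lambda y+(1-\lambda)p) \le \lambda\phi(y)+(1-\lambda)\phi(p)-\frac{m}{2}\lambda(1-\lambda)\|y-p\|^2$. Since $p$ minimizes $\phi$, the left-hand side is at least $\phi(p)$; subtracting $(1-\lambda)\phi(p)$, dividing by $\lambda>0$, and letting $\lambda\to 0^+$ yields the displayed bound. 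This little passage to the limit is what replaces the usual subgradient argument.

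It then remains to expand and rearrange, which is routine algebra. Substituting the definition of $\phi$ and using the polarization identity $\frac{1}{2\gamma}\bigl(\|x-y\|^2-\|x-p\|^2\bigr) = -\frac{1}{\gamma}\langle x-p, y-p\rangle + \frac{1}{2\gamma}\|y-p\|^2$, the lower bound becomes $g(y) - \frac{1}{\gamma}\langle x-p, y-p\rangle + \frac{1}{2\gamma}\|y-p\|^2 \ge g(p) + \frac{m}{2}\|y-p\|^2$. The quadratic terms combine through the cancellation $\frac{1}{2\gamma}-\frac{m}{2} = \frac{1}{2\gamma}-\frac12\bigl(\frac1\gamma-\rho\bigr)=\frac{\rho}{2}$, and moving the inner product to the other side gives exactly $g(p) + \frac{1}{\gamma}\langle x-p, y-p\rangle \le g(y) + \frac{\rho}{2}\|y-p\|^2$, as claimed. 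The only points requiring care are the sign bookkeeping in the polarization step and this final cancellation, which is precisely what produces the clean $\frac{\rho}{2}$ coefficient on the right.
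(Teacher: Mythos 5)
Your proof is correct and is essentially the paper's argument: both compare $\phi(p)$ with $\phi$ at the convex combination $\lambda y+(1-\lambda)p$, invoke the $\left(\frac{1}{\gamma}-\rho\right)$-strong convexity via inequality~\eqref{eq:ineq_wk_cvx}, divide by the interpolation parameter, and let it tend to $0^+$. The only difference is cosmetic ordering --- you first isolate the quadratic-growth bound $\phi(y)\ge\phi(p)+\frac{m}{2}\|y-p\|^2$ and then expand, whereas the paper performs the expansion inline before passing to the limit.
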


This lemma is a generalization of~\cite[Proposition 2.26]{bauschke2017correction} proposed in~\cite[Lemma 3.2]{hoheiselproximal}.

\begin{proof}
For $x, y \in \R^d$, we denote by $p = \Prox{\gamma g}{x}$ and, with $\alpha \in (0,1)$, $p_{\alpha} = \alpha y + (1-\alpha) p$. By definition of the proximal operator~\eqref{eq:prox}, we have
\begin{align*}
    g(p) + \frac{1}{2\gamma} \|x - p\|^2 \le g(p_\alpha) + \frac{1}{2\gamma} \|x - p_\alpha\|^2.
\end{align*}
By the previous inequality and the $(\frac1\gamma-\rho)$-strong convexity of $g+\frac1{2\gamma}\|\cdot\|^2$ in equation~\eqref{eq:ineq_wk_cvx} for $\gamma\rho<1$, we get
\begin{align*}
    &g(p) \le g(p_\alpha) + \frac{1}{2\gamma} \|x - p_\alpha\|^2 - \frac{1}{2\gamma} \|x - p\|^2 \\
    &\le \alpha g(y) + (1-\alpha) g(p) + \frac{\alpha}{2\gamma}  \|x-y\|^2 +\frac{1-\alpha}{2\gamma} \|x-p\|^2 \\
    &-\left(\frac1{\gamma}-\rho\right)\frac{\alpha(1-\alpha)}2\|y-p\|^2- \frac{1}{2\gamma} \|x - p\|^2\\
    &\le \alpha g(y) + (1-\alpha) g(p) - \frac{\alpha}{\gamma} \langle x-p, y-p \rangle + \frac{\alpha^2}{2\gamma} \|y-p\|^2+ \frac{\rho}{2}\alpha (1-\alpha) \|y-p\|^2.
\end{align*}
By rearranging the terms, we get
\begin{align*}
    \alpha \langle x-p, y-p \rangle \le \alpha \gamma g(y) - \alpha \gamma g(p)  + \frac{\alpha}{2} (\alpha(1 - \rho \gamma) +\rho\gamma) \|y-p\|^2.
\end{align*}
By dividing by $\alpha$ and taking $\alpha \to 0$, we get
\begin{align*}
     \langle x-p, y-p \rangle \le \gamma g(y) - \gamma g(p)  + \frac{\gamma \rho}{2} \|y-p\|^2.
\end{align*}
\end{proof}

\begin{proposition}\label{prop:moreau_envelop_dependence_in_gamma}
If $g$ is $\rho$-weakly convex, then for $x\in\R^d$,  $\gamma \in (0, \frac{1}{\rho})\to g^\gamma(x)$ is non-increasing and 
\begin{align}\label{eq:lim_env}
    \lim_{\gamma \to 0} g^\gamma(x) = g(x).
\end{align}
Moreover,  $\gamma \in (0,\frac{1}{\rho}) \to g(\Prox{\gamma g}{x})$ is non-increasing and 
\begin{align}
    \lim_{\gamma \to 0} \Prox{\gamma g}{x} = x,~~~~ \lim_{\gamma \to 0} g(\Prox{\gamma g}{x}) = g(x).
\end{align}
\end{proposition}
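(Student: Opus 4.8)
The plan is to treat the four claims in turn, relying on the variational characterisation of $g^\gamma$ in \eqref{eq:moreau_env} and on the technical Lemma~\ref{lemma:technical_lemma_prox} just established. The monotonicity of $\gamma \mapsto g^\gamma(x)$ is essentially free: for fixed $x,y$ the map $\gamma \mapsto \frac{1}{2\gamma}\|x-y\|^2 + g(y)$ is non-increasing, so the objective being minimised in \eqref{eq:moreau_env} decreases pointwise in $\gamma$, and taking the infimum over $y$ preserves this, giving $g^{\gamma_1}(x) \ge g^{\gamma_2}(x)$ whenever $\gamma_1 \le \gamma_2$. Choosing $y = x$ in \eqref{eq:moreau_env} also records the uniform upper bound $g^\gamma(x) \le g(x)$, which will be used repeatedly.

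Next I would produce a quantitative bound on $\|x - \Prox{\gamma g}{x}\|$. Writing $p = \Prox{\gamma g}{x}$ and applying Lemma~\ref{lemma:technical_lemma_prox} with $y = x$ gives $g(p) + (\frac1\gamma - \frac\rho2)\|x-p\|^2 \le g(x)$; substituting $g(p) = g^\gamma(x) - \frac{1}{2\gamma}\|x-p\|^2$ from \eqref{eq:moreau_prox} rearranges to
\[ \tfrac12\Big(\tfrac1\gamma - \rho\Big)\|x - p\|^2 \le g(x) - g^\gamma(x). \]
Since $g^\gamma(x)$ is non-increasing in $\gamma$ and bounded above by $g(x)$, it increases to a finite limit $\ell \le g(x)$ as $\gamma \to 0$, so the right-hand side stays bounded while the factor $\frac1\gamma - \rho \to +\infty$; hence $\|x - \Prox{\gamma g}{x}\| \to 0$, i.e. $\Prox{\gamma g}{x} \to x$. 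I expect this step — turning the monotone (hence convergent) behaviour of $g^\gamma(x)$ into the coercive estimate that forces $p \to x$ — to be the main obstacle, the delicate point being to guarantee that $g^\gamma(x)$ has a finite limit, which uses that $g^\gamma$ is finite-valued as noted after \eqref{eq:moreau_prox}.

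With $\Prox{\gamma g}{x} \to x$ in hand, lower semicontinuity of $g$ gives $\liminf_{\gamma\to0} g(p) \ge g(x)$, while \eqref{eq:moreau_prox} and the upper bound yield $g(p) \le g^\gamma(x) \le g(x)$, so $\limsup_{\gamma\to0} g(p) \le g(x)$; together these give $\lim_{\gamma\to0} g(\Prox{\gamma g}{x}) = g(x)$. The same sandwich $g(p) \le g^\gamma(x) \le g(x)$ then forces $g^\gamma(x) \to g(x)$ (so in fact $\ell = g(x)$), which is exactly \eqref{eq:lim_env}.

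Finally, for the monotonicity of $\gamma \mapsto g(\Prox{\gamma g}{x})$, I would compare two parameters $\gamma_1 < \gamma_2$ with proximal points $p_1, p_2$. Writing the defining optimality inequality of \eqref{eq:prox} for $p_1$ tested against $p_2$, and for $p_2$ tested against $p_1$, then adding them, the $g$-terms cancel and one obtains $(\frac{1}{2\gamma_1} - \frac{1}{2\gamma_2})(\|x-p_1\|^2 - \|x-p_2\|^2) \le 0$, whence $\|x - p_1\| \le \|x - p_2\|$ (the distance to $x$ grows with $\gamma$). Re-injecting this into the optimality inequality for $p_2$ gives $g(p_2) - g(p_1) \le \frac{1}{2\gamma_2}(\|x-p_1\|^2 - \|x-p_2\|^2) \le 0$, i.e. $\gamma \mapsto g(\Prox{\gamma g}{x})$ is non-increasing, which completes the proof.
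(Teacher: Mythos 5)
Your proof is correct, but it takes a genuinely different route from the paper's in both substantive steps, and in each case your route is more elementary. For the limit $\Prox{\gamma g}{x} \to x$, the paper first bounds $\|\Prox{\gamma g}{x}\|$ via a coercivity argument and then invokes an affine minorant of the convex function $g + \frac{\rho}{2}\|\cdot\|^2$ to show that $\frac{1}{2\gamma}\|x - \Prox{\gamma g}{x}\|^2$ stays bounded; you instead specialize Lemma~\ref{lemma:technical_lemma_prox} at $y = x$ and obtain the clean quantitative estimate $\frac{1}{2}\left(\frac{1}{\gamma} - \rho\right)\|x - \Prox{\gamma g}{x}\|^2 \le g(x) - g^\gamma(x)$, from which the conclusion is immediate since the right-hand side is bounded as $\gamma \to 0$. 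Note that your appeal to the finiteness of the limit $\ell$ rests on $g(x) < +\infty$, exactly as the paper's assertion $M < +\infty$ does; for extended-valued $g$ (e.g.\ an indicator function with $x \notin \text{dom}(g)$) the statement itself fails, so the standing assumption that $g$ is real-valued is genuinely used by both proofs. For the monotonicity of $\gamma \mapsto g(\Prox{\gamma g}{x})$, the paper applies Lemma~\ref{lemma:technical_lemma_prox} twice, at parameters $\gamma \le \mu$ with $p = \Prox{\gamma g}{x}$ and $q = \Prox{\mu g}{x}$, and sums to get $\left(1 - \frac{\rho}{2}(\mu+\gamma)\right)\|p-q\|^2 \le (\mu - \gamma)\left(g(p) - g(q)\right)$, whose left side is nonnegative for $\gamma, \mu \in (0, \frac{1}{\rho})$; you instead sum the two raw optimality inequalities defining the prox, deduce as a byproduct that $\gamma \mapsto \|x - \Prox{\gamma g}{x}\|$ is non-decreasing, and re-inject this into the optimality inequality at $\gamma_2$. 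Your variant needs only the definition and single-valuedness of the proximal point (the weak convexity constant $\rho$ enters only through well-posedness), whereas the paper's variant additionally yields quantitative control of $\|p - q\|^2$ in terms of $\mu - \gamma$, which is of independent interest even though only its sign is used here. The remaining steps (monotonicity of $\gamma \mapsto g^\gamma(x)$ from the definition, and the sandwich $g(\Prox{\gamma g}{x}) \le g^\gamma(x) \le g(x)$ combined with lower semicontinuity) coincide with the paper's argument.
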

Proposition~\ref{prop:moreau_envelop_dependence_in_gamma} shows that $g^\gamma$ has a monotonic dependency in $\gamma$. Moreover, when the step-size $\gamma \to 0$, the different objects have a well defined and computable limit. Note that in particular, we have $\forall \gamma \in (0, \frac{1}{\rho}), \forall x \in \R^d, \inf g \le g^{\gamma}(x) \le g(x)$.
\begin{proof}
We need $\gamma < \frac{1}{\rho}$ to ensure that the Moreau envelope is well defined. We obtain from the definition of the Moreau envelope (see relation~\eqref{eq:moreau_env}) that $\gamma \to g^\gamma(x)$ is non-increasing and  $g^\gamma(x) \le g(x)$. The rest of the proof is a generalization of~\cite[Proposition 12.33]{bauschke2017correction} in the weakly convex case.

By applying Lemma~\ref{lemma:technical_lemma_prox}, with $p = \textsf{Prox}_{\gamma g}(x)$, we have
\begin{align*}
    \langle x-p, y-p \rangle \le \gamma g(y) - \gamma g(p)  + \frac{\gamma \rho}{2} \|y-p\|^2.
\end{align*}
For $\frac1\rho>\mu \ge \gamma$, and $q = \Prox{\mu g}{x}$. Lemma~\ref{lemma:technical_lemma_prox} gives that $\forall y\in\R^d$:
\begin{align*}
     \langle x-q, y-q \rangle \le \gamma g(y) - \gamma g(q)  + \frac{\mu \rho}{2} \|y-q\|^2.
\end{align*}
Taking respectively $y=q$ and $y=p$ in the two previous inequalities, we obtain
\begin{align*}
    \langle q-p, x-p \rangle &\le \gamma g(q) - \gamma g(p)  + \frac{\gamma \rho}{2} \|q-p\|^2 \\
     \langle p-q, x-q \rangle &\le \mu g(p) - \mu g(q)  + \frac{\mu \rho}{2} \|p-q\|^2.
\end{align*}
By summing these two relations and rearranging the terms, we get
\begin{align*}
    \left( 1 - \frac{\rho}{2}(\mu + \gamma) \right) \|p - q\|^2 \le (\mu - \gamma) \left(g(p) - g(q) \right).
\end{align*}
Therefore, if $\gamma, \mu \in (0,\frac{1}{\rho})$, for $\mu \ge \gamma$, we get $g(\Prox{\mu g}{x}) \le g(\Prox{\gamma g}{x})$. It proves that $\gamma \in (0,\frac{1}{\rho}) \to g(\Prox{\gamma g}{x})$ is non-increasing.

We now show that $g^\gamma(x) \to g(x)$ when $\gamma \to 0$. 
We denote $M = \sup_{\gamma \in (0, \frac{1}{2\rho}]}g^\gamma(x) < +\infty$. From the characterization~\eqref{eq:moreau_prox} we have $\forall \gamma \in (0, \frac{1}{2\rho}]$, 
\begin{align*}
    M \ge g^\gamma(x)=g(\Prox{\gamma g}{x}) + \frac{1}{2\gamma} \|x - \Prox{\gamma g}{x}\|^2.
\end{align*}
Since, $\phi : y \mapsto g(y) + \rho \|x-y\|^2$ is $\rho$-strongly convex thus coercive and given that $\phi(\Prox{\gamma g}{x})$ has a value lower than $M$, we get that $m = \sup_{\gamma \in (0, \frac{1}{2\rho}]}\|\Prox{\gamma g}{x}\| < +\infty$. 
Moreover, the function $g +\frac{\rho}{2}\|\cdot\|^2$ is convex and lower bounded by an affine function. So there exist $a_1 \in\R^d$ and $a_2 \in \R$, such that $\forall x \in \R^d$, $g(x) +\frac{\rho}{2}\|x\|^2 \ge \langle a_1, x \rangle + a_2$. Combining the previous facts, we get
\begin{align*}
    M &\ge g(\Prox{\gamma g}{x}) + \frac{1}{2\gamma} \|x - \Prox{\gamma g}{x}\|^2 \\
    &\ge \langle a_1, \Prox{\gamma g}{x} \rangle + a_2 - \frac{\rho}{2}\|\Prox{\gamma g}{x}\|^2 + \frac{1}{2\gamma} \|x - \Prox{\gamma g}{x}\|^2 \\
    &\ge - \| a_1\| m + a_2 - \frac{\rho}{2}m^2 + \frac{1}{2\gamma} \|x - \Prox{\gamma g}{x}\|^2.
\end{align*}
So we get $\lim_{\gamma \to 0}\|x - \Prox{\gamma g}{x}\| = 0$. Finally, by the lower semi-continuity of $g$, we have
\begin{align*}
    g(x) \ge \lim_{\gamma \to 0} g^\gamma(x) \ge \lim_{\gamma \to 0} g(\Prox{\gamma g}{x}) \ge g(x),
\end{align*}
which proves the limit when $\gamma \to 0$.
\end{proof}

The previous proposition can be completed by the following proposition.

\begin{proposition}\label{prop:derivative_in_gamma}
For $g$ a $\rho$-weakly convex function, the function $(\gamma, x) \in [0, \frac{1}{\rho}) \times \R^d \mapsto \Prox{\gamma g}{x}$ is continuous and the function $(\gamma, x) \in (0, \frac{1}{\rho}) \times \R^d \mapsto g^\gamma(x)$ is lower semi-continuous.
Moreover the function $\gamma \in (0, \frac{1}{\rho}) \mapsto g^\gamma(x)$ is differentiable and
\begin{align*}
    \frac{\partial g^\gamma}{\partial \gamma}(x) = -\frac{1}{\gamma^2}\|x - \Prox{\gamma g}{x}\|^2.
\end{align*}
\end{proposition}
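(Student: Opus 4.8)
The plan is to treat the three assertions in turn, building each on the previous, and to rely throughout on the strong convexity estimate already encapsulated in Lemma~\ref{lemma:technical_lemma_prox}.

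For the continuity of $(\gamma, x) \mapsto \Prox{\gamma g}{x}$, I would first fix $\gamma$ and bound the dependence in $x$. Writing $p_i = \Prox{\gamma g}{x_i}$, I apply Lemma~\ref{lemma:technical_lemma_prox} twice, once at $(x_1,p_1)$ tested against $y=p_2$ and once at $(x_2,p_2)$ tested against $y=p_1$, then add. The cross terms combine into $\|p_1-p_2\|^2$, giving $(\frac1\gamma-\rho)\|p_1-p_2\|^2 \le \frac1\gamma \langle x_1-x_2, p_1-p_2\rangle$ and hence $\|p_1-p_2\| \le \frac{1}{1-\gamma\rho}\|x_1-x_2\|$, i.e.\ the expansivity bound of Lemma~\ref{lemma:expansivity_prox}. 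Fixing instead $x$ and comparing two parameters $\gamma_1,\gamma_2$ by the same pair of applications, I obtain $\|\Prox{\gamma_1 g}{x} - \Prox{\gamma_2 g}{x}\| \le \frac{|\gamma_1-\gamma_2|}{\gamma_1(1-\gamma_2\rho)}\|x - \Prox{\gamma_1 g}{x}\|$. Since $\gamma \mapsto \|x-\Prox{\gamma g}{x}\|$ is locally bounded, combining the two estimates through the triangle inequality yields joint continuity on $(0,\frac1\rho)\times\R^d$. Continuity up to $\gamma=0$ (with the convention $\Prox{0\cdot g}{x}=x$) then follows from $\|\Prox{\gamma g}{x}-x_0\| \le \frac{1}{1-\gamma\rho}\|x-x_0\| + \|\Prox{\gamma g}{x_0}-x_0\|$ together with $\lim_{\gamma\to 0}\Prox{\gamma g}{x_0}=x_0$ from Proposition~\ref{prop:moreau_envelop_dependence_in_gamma}.

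For the lower semicontinuity of $g^\gamma$ on $(0,\frac1\rho)\times\R^d$, I would use the identity~\eqref{eq:moreau_prox}, $g^\gamma(x) = g(\Prox{\gamma g}{x}) + \frac{1}{2\gamma}\|x-\Prox{\gamma g}{x}\|^2$. Along any $(\gamma_n,x_n)\to(\gamma_0,x_0)$ with $\gamma_0>0$, the continuity just established gives $\Prox{\gamma_n g}{x_n}\to \Prox{\gamma_0 g}{x_0}$, so the quadratic term converges, while the lower semicontinuity of $g$ gives $\liminf_n g(\Prox{\gamma_n g}{x_n}) \ge g(\Prox{\gamma_0 g}{x_0})$. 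Summing yields $\liminf_n g^{\gamma_n}(x_n)\ge g^{\gamma_0}(x_0)$.

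For the differentiability in $\gamma$, I would avoid differentiating the inner minimizer and instead squeeze the difference quotient between two feasibility bounds. Fix $x$, write $p_\gamma=\Prox{\gamma g}{x}$, and take $\mu\neq\gamma$. Since $p_\gamma$ is feasible (but not optimal) for the $\mu$-problem, $g^\mu(x)\le g(p_\gamma)+\frac{1}{2\mu}\|x-p_\gamma\|^2$, and subtracting the exact value $g^\gamma(x)=g(p_\gamma)+\frac{1}{2\gamma}\|x-p_\gamma\|^2$ gives $g^\mu(x)-g^\gamma(x)\le (\frac{1}{2\mu}-\frac{1}{2\gamma})\|x-p_\gamma\|^2$; the symmetric argument gives the reverse inequality with $\|x-p_\mu\|^2$. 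Dividing by $\mu-\gamma$ and using $\frac{1}{\mu-\gamma}(\frac{1}{2\mu}-\frac{1}{2\gamma}) = -\frac{1}{2\mu\gamma}$ sandwiches the quotient between $-\frac{1}{2\mu\gamma}\|x-p_\mu\|^2$ and $-\frac{1}{2\mu\gamma}\|x-p_\gamma\|^2$. Letting $\mu\to\gamma$ and invoking the continuity of $\mu\mapsto p_\mu$ from the first part, both bounds converge to the same value, so $\gamma\mapsto g^\gamma(x)$ is differentiable with $\frac{\partial g^\gamma}{\partial\gamma}(x) = -\frac{1}{2\gamma^2}\|x-\Prox{\gamma g}{x}\|^2$. (I note that this elementary computation, as well as the explicit check on $g=\frac a2\|\cdot\|^2$, produces the constant $\frac{1}{2\gamma^2}$ rather than the $\frac{1}{\gamma^2}$ written in the statement, a point I would reconcile.) The main obstacle is really the joint continuity underpinning everything: the $\gamma$-estimate is delicate because the parameter sits inside the strongly convex coupling term, and the squeeze for the derivative is valid only once $\mu\mapsto p_\mu$ is known to be continuous, so I would establish continuity first as the linchpin of the whole proof.
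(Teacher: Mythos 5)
Your proof is correct, and its continuity part takes a genuinely different route from the paper's. The paper proves joint continuity of $(\gamma,x)\mapsto\Prox{\gamma g}{x}$ by compactness: it bounds $\Prox{\gamma_n g}{x_n}$ via coercivity of a strongly convex auxiliary function, extracts a convergent subsequence, and identifies the limit as $\Prox{\gamma g}{x}$ using the lower semicontinuity of $g$. You instead derive quantitative estimates directly from Lemma~\ref{lemma:technical_lemma_prox}: the cocoercivity bound in $x$ (re-proving Lemma~\ref{lemma:expansivity_prox}, which indeed rests only on that technical lemma, so its later placement in the paper causes no circularity) and the bound $\|\Prox{\gamma_1 g}{x}-\Prox{\gamma_2 g}{x}\|\le\frac{|\gamma_1-\gamma_2|}{\gamma_1(1-\gamma_2\rho)}\|x-\Prox{\gamma_1 g}{x}\|$ in $\gamma$, which I verified by adding the two instances of the lemma as you describe. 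Your approach avoids subsequence extraction and yields explicit local moduli of continuity (local Lipschitzness in both variables on compact subsets of $(0,\frac1\rho)\times\R^d$), which the paper's argument does not provide; the paper's argument is the standard variational one and needs no estimates. One small point to make explicit: the local boundedness of $\gamma\mapsto\|x-\Prox{\gamma g}{x}\|$ that you invoke follows non-circularly from your own $\gamma$-estimate with one endpoint fixed. Your treatment of continuity at $\gamma=0$ (Lipschitzness in $x$ plus $\lim_{\gamma\to 0}\Prox{\gamma g}{x_0}=x_0$ from Proposition~\ref{prop:moreau_envelop_dependence_in_gamma}) matches the paper's. The lower-semicontinuity step and the differentiability squeeze are essentially identical to the paper's, down to sandwiching the difference quotient between $-\frac{1}{2\mu\gamma}\|x-\Prox{\mu g}{x}\|^2$ and $-\frac{1}{2\mu\gamma}\|x-\Prox{\gamma g}{x}\|^2$.

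You are also right to flag the constant: the correct derivative is $-\frac{1}{2\gamma^2}\|x-\Prox{\gamma g}{x}\|^2$, and the discrepancy is a typo in the paper, not an error on your side. The paper's own sandwich inequality yields exactly this limit as $\lambda\to\gamma$, yet its final display (and the statement of the proposition) drop the factor $\frac{1}{2}$; the Hamilton--Jacobi identity $\frac{\partial g^\gamma}{\partial\gamma}(x)+\frac{1}{2}\|\nabla g^\gamma(x)\|^2=0$ stated after Proposition~\ref{prop:nabla_g_weakly_cvx_appendix}, combined with $\nabla g^\gamma(x)=\frac{1}{\gamma}\left(x-\Prox{\gamma g}{x}\right)$ from that proposition, independently confirms the $\frac{1}{2\gamma^2}$ constant, as does your explicit check on $g=\frac{a}{2}\|\cdot\|^2$.
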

Proposition~\ref{prop:derivative_in_gamma} can be found in~\cite[Lemma 2 and Lemma 6]{habring2025diffusion}. Note that if $g$ is assumed to be continuous, we can prove that the function $(\gamma, x) \in [0, \frac{1}{\rho}) \times \R^d \mapsto g^\gamma(x)$ is globally continuous by adapting the proof with the continuity extension theorem.
\begin{proof}
For a sequence $(\gamma_n, x_n) \in \left((0, \frac{1}{\rho}) \times \R^d\right)^\N$ such that $(\gamma_n, x_n) \to (\gamma, x) \in (0, \frac{1}{\rho}) \times \R^d$, by the definition of the Moreau envelop~\eqref{eq:moreau_env}, we have
\begin{align*}
    \frac{1}{2\gamma_n}\|x_n - \Prox{\gamma_n g}{x_n}\|^2 + g(\Prox{\gamma_n g}{x_n}) \le \frac{1}{2\gamma_n}\|x_n\|^2 + g(0).
\end{align*}
Because the sequences $x_n$ and $\gamma_n$ have a limit, there exists $M \ge 0$ such that $\forall n \in \N$, $\frac{1}{2\gamma_n}\|x_n\|^2 + g(0) \le M$. There also exists $\bar{\gamma} < \frac{1}{\rho}$ such that $\forall n \in \N$, $\gamma_n \le \bar{\gamma}$, so we get
\begin{align*}
    \frac{1}{2\bar{\gamma}}\|x_n - \Prox{\gamma_n g}{x_n}\|^2 + g(\Prox{\gamma_n g}{x_n}) \le M.
\end{align*}
However we have 
\begin{align*}
    &\|x_n - \Prox{\gamma_n g}{x_n}\|^2 = \|x_n\|^2 + \|\Prox{\gamma_n g}{x_n}\|^2 - 2 \langle x_n, \Prox{\gamma_n g}{x_n} \rangle \\
    &\ge \|\Prox{\gamma_n g}{x_n}\|^2 - 2 \| x_n \| \|\Prox{\gamma_n g}{x_n} \|
\end{align*}
As the sequence $x_n$ has a limit, there exists $M_x \ge 0$, such that $\|x_n\| \le M_x$. Then we have  
\begin{align*}
    \|x_n - \Prox{\gamma_n g}{x_n}\|^2 \ge \|\Prox{\gamma_n g}{x_n}\|^2 - 2 M_x \|\Prox{\gamma_n g}{x_n} \|.
\end{align*}
By injecting this inequality in the previous equation, we get
\begin{align*}
    \frac{1}{2\bar{\gamma}} \|\Prox{\gamma_n g}{x_n}\|^2 + g(\Prox{\gamma_n g}{x_n}) - \frac{1}{\bar{\gamma}} M_x \|\Prox{\gamma_n g}{x_n} \| \le M.
\end{align*}
However, because $g$ is $\rho$-weakly convex and $\bar{\gamma} < \frac{1}{\rho}$, the function $y \mapsto \frac{1}{2\bar{\gamma}} \|y\|^2 + g(y) - \frac{1}{\bar{\gamma}} M_x \|y\|$ is strongly convex, thus coercive. Due to the previous inequality, there exists $M_p \ge 0$, such that $\forall n \in \N$, $\|\Prox{\gamma_n g}{x_n}\| \le M_p$. By taking a subsequence, we deduce (without re-indexing for simplicity) that there exists $p \in \R^d$ such that $\lim_{n \to +\infty} \Prox{\gamma_n g}{x_n} = p$.

By the lower semi-continuity of $g$, we have that for any $y \in \R^d$
\begin{align*}
    \frac{1}{2 \gamma}\|x - p\|^2 + g(p) &\le \lim_{n \to +\infty} \frac{1}{2 \gamma_n}\|x_n - \Prox{\gamma_n g}{x_n}\|^2 + g(\Prox{\gamma_n g}{x_n}) \\
    &\le \lim_{n \to +\infty} \frac{1}{2 \gamma_n}\|x_n - y\|^2 + g(y) \\
    &= \frac{1}{2 \gamma}\|x - y\|^2 + g(y).
\end{align*}
The previous inequality implies that $p = \Prox{\gamma g}{x}$. So $\Prox{\gamma g}{x}$ is the unique accumulation point of $\Prox{\gamma_n g}{x_n}$, so we get that $\lim_{n \to +\infty} \Prox{\gamma_n g}{x_n} = \Prox{\gamma g}{x}$. This proves that $(\gamma, x) \in (0, \frac{1}{\rho})\times \R^d \mapsto \Prox{\gamma g}{x}$ is continuous. 
Moreover, combining Proposition~\ref{prop:moreau_envelop_dependence_in_gamma} and Lemma~\ref{lemma:prox_lipschitz_weakly_cvx}, we obtain that $\lim_{n \to +\infty} \Prox{\gamma_n g}{x_n} = x$, if $x_n \to x$ and $\gamma_n \to 0$. So, we get that $(\gamma, x) \in [0, \frac{1}{\rho})\times \R^d \mapsto \Prox{\gamma g}{x}$ is continuous.

Finally, from the lower semi-continuity of $g$, we get
\begin{align*}
    &\lim_{n \to +\infty} \frac{1}{2 \gamma_n}\|x_n - \Prox{\gamma_n g}{x_n}\|^2 + g(\Prox{\gamma_n g}{x_n}) \\
    &\ge \frac{1}{2 \gamma}\|x - \Prox{\gamma g}{x}\|^2 + g(\Prox{\gamma_n g}{x}).
\end{align*}
This shows that the function $(\gamma, x) \in (0, \frac{1}{\rho})\times \R^d \mapsto g^\gamma(x)$ is lower semi-continuous.

We now prove that $g^\gamma$ is differentiable w.r.t. $\gamma$. For $0 < \gamma < \lambda < \frac{1}{\rho}$, by the definition of the Moreau envelop~\eqref{eq:moreau_env}, we get  for $x \in \R^d$ 
\begin{align*}
    &\frac{1}{2\lambda}\|x - \Prox{\lambda g}{x}\|^2 + g(\Prox{\lambda g}{x}) - \frac{1}{2\gamma} \|x - \Prox{\lambda g}{x}\|^2 - g(\Prox{\lambda g}{x}) \\
    &\le g^\lambda(x) - g^\gamma(x) \\
    &\le \frac{1}{2\lambda}\|x - \Prox{\gamma g}{x}\|^2 + g(\Prox{\gamma g}{x})- \frac{1}{2\gamma} \|x - \Prox{\gamma g}{x}\|^2 - g(\Prox{\gamma g}{x}).
\end{align*}
By re-arranging the terms, we get that
\begin{align*}
    -\frac{1}{2\lambda\gamma} \|x - \Prox{\lambda g}{x}\|^2 \le \frac{g^\lambda(x) - g^\gamma(x)}{\lambda - \gamma} \le -\frac{1}{2\lambda\gamma} \|x - \Prox{\gamma g}{x}\|^2.
\end{align*}
By the continuity of $(\gamma,x) \mapsto  \Prox{\gamma g}{x}$, it prove that $\gamma \in (0, \frac{1}{\rho}) \mapsto g^\gamma(x)$ is differentiable with the value
\begin{align*}
    \frac{\partial g^\gamma}{\partial \gamma}(x) = -\frac{1}{\gamma^2}\|x - \Prox{\gamma g}{x}\|^2.
\end{align*}
\end{proof}

\section{The intrinsic link between the Moreau envelope and the convex conjugate}\label{sec:link_moreau_conjugate}
In this part, we introduce the convex conjugate and study its relation with the Moreau envelope. In particular, Proposition~\ref{prop:dual_result_prox} shows a duality result between the convex conjugate and the Proximal operator.

\begin{definition}
The convex conjugate of a proper function $f:\R^d \to \R$, denoted by $f^\star$, is defined for $x \in \R^d$ by
\begin{align*}
    f^\star(x) = \sup_{y \in \R^d} \langle x, y \rangle - f(y).
\end{align*}
\end{definition}

The notion of convex conjugate of $f$ is related with the inf-convolution and the Moreau envelope, as shown by the following results from Lemma~\ref{lemma:fondamental_properties_convex_conjuguate} to Proposition~\ref{prop:dual_result_prox}.

\begin{lemma}\label{lemma:fondamental_properties_convex_conjuguate}
For $f: \R^d \to \R$ a proper function, we have the following properties
\begin{enumerate}[label=(\roman*)]
    \item  $f^\star$ is convex.
    \item For $f, g: \R^d \to \R$ convex, then $(f \square g)^\star=f^\star+g^\star$.
    \item $f : \R^d \to \R^d$ is convex if and only if $f^{\star \star} = f$.
    \item $\left(\frac{\alpha}{2}\|\cdot\|^2\right)^\star= \frac{1}{2\alpha}\|\cdot\|^2$.
\end{enumerate}
\end{lemma}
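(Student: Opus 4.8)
The plan is to handle the four claims largely independently, treating (i), (ii), and (iv) by direct manipulation of the defining supremum and reserving the genuine work for (iii). For (i), I would note that for each fixed $y \in \R^d$ the map $x \mapsto \langle x, y \rangle - f(y)$ is affine in $x$, hence convex, and that $f^\star$ is by definition the pointwise supremum over $y$ of this family. Since a pointwise supremum of convex functions is convex, $f^\star$ is convex; this needs no hypothesis on $f$ beyond properness. For (iv), I would compute $\left(\frac{\alpha}{2}\|\cdot\|^2\right)^\star(x) = \sup_y \langle x, y \rangle - \frac{\alpha}{2}\|y\|^2$ by completing the square (or by setting the gradient $x - \alpha y$ to zero), which identifies the maximizer $y = x/\alpha$ and yields the value $\frac{1}{2\alpha}\|x\|^2$.

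For (ii), the idea is to unfold the definitions and exchange the two suprema. Writing $(f \square g)^\star(x) = \sup_z \langle x, z\rangle - \inf_y [f(y) + g(z-y)] = \sup_{z,y} [\langle x, z\rangle - f(y) - g(z-y)]$, I would substitute $w = z - y$ so that $\langle x, z\rangle = \langle x, y\rangle + \langle x, w\rangle$ and the objective separates into a part depending only on $y$ and a part depending only on $w$. The double supremum then factors as $\sup_y [\langle x, y\rangle - f(y)] + \sup_w [\langle x, w\rangle - g(w)] = f^\star(x) + g^\star(x)$. The only care needed is to justify turning $-\inf$ into $\sup$ and splitting the joint supremum, which is routine once one is mindful of $\pm\infty$ values.

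The heart of the lemma is (iii), the Fenchel--Moreau theorem, and this is where I expect the main obstacle. The backward implication is immediate: if $f^{\star\star} = f$, then $f$ equals the conjugate of $f^\star$ and is therefore convex by (i). For the forward implication I would first establish the easy inequality $f^{\star\star} \le f$ directly from the Fenchel--Young inequality $\langle x, y\rangle \le f(x) + f^\star(y)$, which gives $\langle x, y\rangle - f^\star(y) \le f(x)$ for all $y$ and hence $f^{\star\star}(x) \le f(x)$. The reverse inequality $f^{\star\star} \ge f$ is the difficult part and the one genuinely using convexity together with the standing properness and lower semicontinuity assumptions.

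The standard route for $f^{\star\star} \ge f$ is to show that a proper lower semicontinuous convex function coincides with the supremum of its affine minorants: an affine map $x \mapsto \langle a, x\rangle - b$ lies below $f$ exactly when $f^\star(a) \le b$, so the supremum of all such minorants is precisely $f^{\star\star}$. Producing enough affine minorants—in particular ones whose value at a prescribed point $x_0$ comes arbitrarily close to $f(x_0)$—relies on a separating-hyperplane argument applied to the closed convex epigraph of $f$ and a point $(x_0, r)$ lying strictly below the graph; translating the resulting separating functional into the desired affine minorant (and handling the case $f(x_0) = +\infty$ by driving the minorant values up) is the technical crux I anticipate spending the most effort on.
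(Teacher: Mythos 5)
Your proposal is correct and follows essentially the same route as the paper: (i) via the supremum of affine functions, (ii) by exchanging suprema and shifting variables, (iv) by identifying the maximizer $y = x/\alpha$, and (iii) via the representation of a convex function as the supremum of its affine minorants, with the set of minorants characterized by $-b \ge f^\star(a)$. The only difference is that you plan to actually prove the affine-minorant representation by a separation argument applied to the epigraph, whereas the paper takes that representation as given; note also that since the lemma states $f:\R^d\to\R$ is finite-valued, convexity already implies continuity, so your concerns about lower semicontinuity and the case $f(x_0)=+\infty$ do not arise here.
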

By Lemma~\ref{lemma:fondamental_properties_convex_conjuguate}(i), $f^\star$ is convex even if $f$ is non-convex. Moreover,   Lemma~\ref{lemma:fondamental_properties_convex_conjuguate}(ii) shows the compatibility of the convex conjugate with the inf-convolution operation.

\begin{proof}
\textbf{(i)}
For $x, y \in \R^d$ and $\lambda \in [0, 1]$, we have
\begin{align*}
    f^\star(\lambda x + (1-\lambda)y) &= \sup_{z \in \R^d} \langle \lambda x + (1-\lambda)y, z \rangle - f(z) \\
    &= \sup_{z \in \R^d} \lambda \left(\langle x , z \rangle - f(z)\right) + (1-\lambda)\left(\langle y , z \rangle - f(z)\right) \\
    &\le \lambda \sup_{z \in \R^d}\left(\langle x , z \rangle - f(z)\right) + (1-\lambda)  \sup_{z \in \R^d}\left(\langle y , z \rangle - f(z)\right) \\
    &\le \lambda f^\star(x) + (1-\lambda)f^\star(y),
\end{align*}
which shows the convexity of $f^\star$.

\textbf{(ii)} For $x \in \R^d$, 
\begin{align*}
    (f \square g)^\star(x) &= \sup_{y \in \R^d} \langle x, y\rangle - \left( f \square g \right)(y) \\
    &= \sup_{y \in \R^d} \langle x, y\rangle - \inf_{z \in \R^d} f(z) + g(y-z) \\
    &= \sup_{y \in \R^d} \sup_{z \in \R^d} \langle x, y\rangle -f(z) - g(y-z) \\
    &= \sup_{z \in \R^d} -f(z) + \sup_{y \in \R^d} \langle x, y\rangle - g(y-z) \\
    &= \sup_{z \in \R^d} -f(z) + \langle x, z\rangle + \sup_{y \in \R^d} \langle x, y - z\rangle - g(y-z) \\
    &= \sup_{z \in \R^d} -f(z) + \langle x, z\rangle + g^\star(x) \\
    &= f^\star(x) + g^\star(x).
\end{align*}

\textbf{(iii)}
If $f^{\star \star} = f$, then by point (i), $f$ is convex.

If $f$ is convex, we have
\begin{align*}
    f(x) = \sup_{(a, b) \in \Sigma} \langle a, x \rangle + b,
\end{align*}
with $\Sigma = \{ (a, b) \in \R^d \times \R\,|\, \forall x \in \R^d, \langle a, x \rangle + b \le f(x) \}$. Hence $(a,b) \in \Sigma$ if and only if $\forall x \in \R^d$, $\langle a, x \rangle - f(x) \le -b$, \textit{i.e.} $-b \ge f^\star(a)$. Then, we have
\begin{align*}
    f(x) &= \sup_{(a, b) \in \Sigma} \langle a, x \rangle + b = \sup_{a \in \R^d, b \le -f^\star(a)} \langle a, x \rangle + b \\
    &= \sup_{a \in \R^d} \langle a, x \rangle -f^\star(a) = f^{\star\star}(x).
\end{align*}

\textbf{(iv)} For $x\in \R^d$, $\left(\frac{\alpha}{2}\|x\|^2\right)^\star =\sup_{y \in \R^d} \langle x, y \rangle - \frac{\alpha}{2}\|y\|^2$. The sup is reached at $y = \frac{x}{\alpha}$, so that $\left(\frac{\alpha}{2}\|\cdot\|^2\right)^\star= \frac{1}{2\alpha}\|\cdot\|^2$.

\end{proof}

\begin{lemma}\label{lemma:more_convex_conjuguate_properties}
For $f: \R^d \to \R$ a proper function, we have the following properties
\begin{enumerate}[label=(\roman*)]
\item $\forall x, y \in \R^d, f(x) + f^\star(y) \ge \langle x, y \rangle.$
\item For $f$ convex and differentiable and $x \in \R^d$, with $y = \nabla f(x)$, we have $f(x) + f^\star(y) = \langle x, y \rangle$.
\item For $f$ convex and differentiable, we have $y = \nabla f(x)$ if and only if $x = \nabla f^\star(y)$.
\item For $f$ convex and differentiable, $f$ is $\mu$-strongly convex if and only if $f^\star$ is $1/\mu$-smooth.
\end{enumerate}
\end{lemma}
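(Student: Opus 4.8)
The plan is to derive all four items from the Fenchel--Young inequality of (i) together with the first-order optimality condition of the concave maximization defining the conjugate. For (i) I would fix $x,y$ and read off directly from the definition $f^\star(y)=\sup_z \langle y,z\rangle - f(z) \ge \langle y,x\rangle - f(x)$ the inequality $f(x)+f^\star(y)\ge\langle x,y\rangle$; this uses neither convexity nor differentiability. For (ii), when $y=\nabla f(x)$ the map $z\mapsto\langle y,z\rangle - f(z)$ is concave (since $f$ is convex) and its gradient $y-\nabla f(z)$ vanishes at $z=x$, so $x$ is a global maximizer and $f^\star(y)=\langle y,x\rangle - f(x)$, i.e.\ equality holds in (i).

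For (iii) the idea is that each of the two conditions is equivalent to the single Fenchel--Young equality $f(x)+f^\star(y)=\langle x,y\rangle$. Indeed, this equality says precisely that $x$ realizes the supremum defining $f^\star(y)$, whose stationarity condition (using differentiability of $f$) is $\nabla f(x)=y$; and, invoking $f^{\star\star}=f$ from Lemma~\ref{lemma:fondamental_properties_convex_conjuguate}(iii) (valid since $f$ is convex), the same equality says that $y$ realizes the supremum defining $f^{\star\star}(x)=f(x)$, whose stationarity condition is $\nabla f^\star(y)=x$. Combining the two characterizations yields the asserted equivalence. The subtle point is the passage from ``$x$ is a subgradient of $f^\star$ at $y$'' to ``$x=\nabla f^\star(y)$'', which requires $f^\star$ to be differentiable at $y$; I would justify this through uniqueness of the maximizer of $z\mapsto\langle y,z\rangle-f(z)$, which holds on the range of $\nabla f$ and is automatic in the strongly convex setting needed for (iv).

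For the forward implication of (iv), I would use that $\mu$-strong convexity of $f$ is equivalent to strong monotonicity $\langle\nabla f(x)-\nabla f(x'),x-x'\rangle\ge\mu\|x-x'\|^2$ (inequality~\eqref{eq:ineq_wk_cvx_diff} with $\rho=-\mu$). Strong convexity gives a unique maximizer defining $f^\star$, hence $f^\star$ is differentiable and (iii) applies; setting $y=\nabla f(x),\,y'=\nabla f(x')$ and inverting as $x=\nabla f^\star(y),\,x'=\nabla f^\star(y')$, the monotonicity reads $\langle y-y',\nabla f^\star(y)-\nabla f^\star(y')\rangle\ge\mu\|\nabla f^\star(y)-\nabla f^\star(y')\|^2$, and Cauchy--Schwarz then gives $\|\nabla f^\star(y)-\nabla f^\star(y')\|\le\frac1\mu\|y-y'\|$, i.e.\ $f^\star$ is $1/\mu$-smooth.

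The converse implication is the main obstacle, because from $1/\mu$-smoothness of $f^\star$ the Cauchy--Schwarz bound now points the wrong way and cannot reproduce the quadratic lower bound. To close it I would first establish co-coercivity of $\nabla f^\star$ (the Baillon--Haddad property): applying the descent lemma to the convex $1/\mu$-smooth function $z\mapsto f^\star(z)-\langle\nabla f^\star(y),z\rangle$, which is minimized at its stationary point $z=y$, and symmetrizing in $y,y'$, yields $\langle\nabla f^\star(y)-\nabla f^\star(y'),y-y'\rangle\ge\mu\|\nabla f^\star(y)-\nabla f^\star(y')\|^2$. Translating this back through the bijection of (iii) and $f=f^{\star\star}$ gives $\langle\nabla f(x)-\nabla f(x'),x-x'\rangle\ge\mu\|x-x'\|^2$, which is exactly $\mu$-strong convexity of $f$. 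I expect this co-coercivity step to require the most care, as it is the only place where a genuinely nontrivial inequality beyond monotonicity and Cauchy--Schwarz is needed.
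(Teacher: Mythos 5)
Your proposal is correct, and on items (i), (ii), (iii) and the forward direction of (iv) it follows essentially the paper's own route: Fenchel--Young read off the definition, the gradient inequality forcing equality in (ii), the Fenchel--Young-equality characterization plus $f^{\star\star}=f$ (Lemma~\ref{lemma:fondamental_properties_convex_conjuguate}(iii)) for the equivalence in (iii), and strong monotonicity (inequality~\eqref{eq:ineq_wk_cvx_diff} with $\rho=-\mu$) combined with Cauchy--Schwarz for the $\frac{1}{\mu}$-Lipschitz bound on $\nabla f^\star$. The one genuine divergence is the converse of (iv): the paper simply cites the Baillon--Haddad theorem~\cite[Corollary 18.17]{bauschke2017correction} to obtain $\mu$-cocoercivity of $\nabla f^\star$, whereas you re-prove it via the descent lemma applied to $z\mapsto f^\star(z)-\langle\nabla f^\star(y),z\rangle$, minimized at $z=y$, followed by symmetrization. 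This is the standard proof of Baillon--Haddad; it makes the argument self-contained, which fits the paper's stated preference for elementary tools, at the cost of carrying out one more nontrivial inequality that the paper outsources. The translation back through the bijection $\nabla f^\star=(\nabla f)^{-1}$ is then identical in both arguments.

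Two smaller points of comparison. In the forward direction of (iv), the paper does not deduce differentiability of $f^\star$ abstractly: it first shows $\nabla f$ is a bijection, writes $f^\star(x)=\langle x,(\nabla f)^{-1}(x)\rangle-f\left((\nabla f)^{-1}(x)\right)$, and proves $\nabla f^\star=(\nabla f)^{-1}$ by an explicit first-order expansion using the $\frac{1}{\mu}$-Lipschitz bound on $(\nabla f)^{-1}$; your Danskin-style step ``unique maximizer, hence $f^\star$ differentiable'' is correct in substance but would need this (or an equivalent justification, e.g.\ that a finite convex function with singleton subdifferential is differentiable) spelled out. In (iii), you rightly flag a subtlety the paper silently elides --- the paper passes from the subgradient inequality $f^\star(t)\ge f^\star(y)+\langle t-y,x\rangle$ and convexity of $f^\star$ directly to $x=\nabla f^\star(y)$ --- but your proposed repair is overstated: uniqueness of the maximizer of $z\mapsto\langle y,z\rangle-f(z)$ does \emph{not} hold on the range of $\nabla f$ for a general convex differentiable $f$ (take $f$ affine: every $z$ is a maximizer and $f^\star$ is an indicator function, nowhere differentiable in the classical sense). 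It does hold, as you note, in the strongly convex regime, which is the only place (iii) is actually invoked in the proof of (iv), so your argument closes where it needs to.
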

Lemma~\ref{lemma:more_convex_conjuguate_properties}(iii) can be reformulated as $\nabla f \circ \nabla f^\star = I_d$.
\begin{proof}
\textbf{(i)} For $x, y \in \R^d$
\begin{align*}
    f(x) + f^\star(y) = \sup_{z \in \R^d} \langle y,z\rangle + f(x) - f(z) \ge \langle y,x\rangle.
\end{align*}

\textbf{(ii)} If $y = \nabla f(x)$, then by the convexity of $f$, we get, $\forall z \in \R^d$,
\begin{align*}
    f(z) &\ge f(x) + \langle y, z-x\rangle \\
    \langle x, y \rangle - f(x) &\ge \langle y, z\rangle - f(z)\\
    \langle x, y \rangle - f(x) &\ge f^\star(y).
\end{align*}
So, by definition of $f^\star$,  we get $f^\star(y) = \langle x, y \rangle - f(x)$ .

\textbf{(iii)} If $y = \nabla f(x)$, by the point (ii), we have $f(x) + f^\star(y) = \langle x, y \rangle$. Then for all $t \in \R^d$
\begin{align*}
    f^\star(t) &= \sup_{s \in \R^d} \langle t, s \rangle - f(s)\\
    &\ge \langle t, x \rangle - f(x) \\
    &\ge \langle t - y, x \rangle + \langle y, x \rangle - f(x) \\
    &\ge \langle t - y, x \rangle + f^\star(y).
\end{align*}
Thanks to Lemma~\ref{lemma:fondamental_properties_convex_conjuguate}(i) $f^\star$ is convex. Hence we get that $x = \nabla f^\star(y)$. The reverse is true because $f$ is convex so $f^{\star\star} = f$ by Lemma~\ref{lemma:fondamental_properties_convex_conjuguate}(iii). Thus, we can apply the same reasoning with $f^\star$ instead of $f$.

\textbf{(iv)} We first prove the direct implication. We denote $u(x,y) = \langle x, y \rangle - f(y)$. Thanks to equation~\eqref{eq:ineq_wk_cvx_diff}, the $\mu$-strong convexity of $f$ implies that $\forall x, y \in \R^d$, 
\begin{equation}\label{eq:strong_conv}
\langle \nabla f(x) - \nabla f(y), x-y \rangle \ge \mu \|x-y\|^2.
\end{equation} Thus $\nabla f : \R^d \to \R^d$ is injective. As $u$ is strongly concave w.r.t. $y$, it admits a unique maximal point denoted by $y_0$. On this point the optimal condition gives $\nabla_y u(x,y_0) = 0$, so $\nabla f(y_0) = x$. Therefore, $\nabla f$ is surjective, so it is a bijective function. Using points (ii) and (iii) we get
\begin{equation}\label{eq:charac_fstar}
    f^\star(x) = \langle x, (\nabla f)^{-1}(x) \rangle - f((\nabla f)^{-1}(x) ).
\end{equation} 
By applying  Equation~\eqref{eq:strong_conv} on points mapped by  $\left(\nabla f \right)^{-1}$, we get $\forall x, y \in \R^d$,
\begin{align*}
    \langle x-y, (\nabla f)^{-1}(x) - (\nabla f)^{-1}(y) \rangle \ge \mu \|(\nabla f)^{-1}(x) - (\nabla f)^{-1}(y)\|^2.
\end{align*}
The Cauchy-Schwarz inequality then gives
\begin{align}\label{eq:control_inverse_nabla_f}
    \|(\nabla f)^{-1}(x) - (\nabla f)^{-1}(y)\| \le \frac{1}{\mu}\| x-y \|.
\end{align}
Now, we prove that $f^\star$ is differentiable and smooth.

Using~\eqref{eq:charac_fstar}, we have for $x, h \in \R^d$
\begin{align*}
    &f^\star(x+h) - f^\star(x) \\
    &= \langle x + h, (\nabla f)^{-1}(x + h) \rangle - f((\nabla f)^{-1}(x +h) ) - \langle x, (\nabla f)^{-1}(x) \rangle \\
    &+ f((\nabla f)^{-1}(x) ) \\
    &=\langle x, (\nabla f)^{-1}(x + h) - (\nabla f)^{-1}(x) \rangle + \langle h, (\nabla f)^{-1}(x + h) \rangle \\
    &- f((\nabla f)^{-1}(x +h) ) + f((\nabla f)^{-1}(x) ).
\end{align*}
By denoting $u = (\nabla f)^{-1}(x +h) - (\nabla f)^{-1}(x)$, we know that $u  = \mathcal{O}(\|h\|)$ thanks to Equation~\eqref{eq:control_inverse_nabla_f}. Thus, we get
\begin{align*}
    &f^\star(x+h) - f^\star(x) \\
    &= \langle x, u \rangle + \langle h, (\nabla f)^{-1}(x) \rangle  + \langle h, u \rangle - f((\nabla f)^{-1}(x) + u) + f((\nabla f)^{-1}(x) )\\
    &= \langle x, u \rangle + \langle h, (\nabla f)^{-1}(x) \rangle  - \langle \nabla f((\nabla f)^{-1}(x)), u \rangle + o(\|h\|) \\
    &= \langle h, (\nabla f)^{-1}(x) \rangle + o(\|h\|).
\end{align*}
Therefore $f^\star$ is differentiable and $\nabla f^{\star} = (\nabla f)^{-1}$. Combining this result with equation~\eqref{eq:control_inverse_nabla_f} proves that $f^\star$ is $\frac{1}{\mu}$-smooth.

Conversely, if $f^\star$ is $\frac{1}{\mu}$-smooth, since $f^\star$ is convex by the Baillon–Haddad theorem~\cite[Corollary 18.17]{bauschke2017correction}, $\nabla f^\star$ is $\mu$-cocoercive, \textit{i.e.} $\forall x, y \in \R^d$,
\begin{equation}
    \langle \nabla f^\star(x) - \nabla f^\star(y), x - y \rangle \ge \mu \|\nabla f^\star(x) - \nabla f^\star(y)\|^2.
\end{equation}
By denoting $z = \nabla f^\star(x)$ and $t = \nabla f^\star(y)$, by Lemma~\ref{lemma:more_convex_conjuguate_properties}(iii), we have $x = \nabla f(z)$ and $y = \nabla f(t)$, so
\begin{align*}
    \langle z - t, \nabla f(z) - \nabla f(t) \rangle \ge \mu \|z - t\|^2,
\end{align*}
which implies that $f$ is $\mu$-strongly convex, thanks to equation~\eqref{eq:ineq_wk_cvx_diff}.

\end{proof}

\begin{proposition}\label{prop:dual_result_prox}
    For $f$ convex and $x, y, z \in \R^d$, the two following statements are equivalent
    
    \textbf{(i)} $z = x+y$ and $f(x) + f^\star(y) = \langle x, y \rangle$.
    
    \textbf{(ii)} $x = \Prox{f}{z}$ and $y = \Prox{f^\star}{z}$.
\end{proposition}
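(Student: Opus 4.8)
The plan is to reduce both implications to a single symmetric characterization of the proximal point: for any convex $h$ and any $p, z \in \R^d$,
\begin{align*}
p = \Prox{h}{z} \iff h(p) + h^\star(z - p) = \langle p, z - p\rangle .
\end{align*}
Once this is established, I would apply it twice — to $h = f$ and to $h = f^\star$ — using that $f^{\star\star} = f$ (Lemma~\ref{lemma:fondamental_properties_convex_conjuguate}(iii), valid since $f$ is convex) to interchange the roles of $f$ and $f^\star$. The key observation is that the right-hand equality is exactly the equality case of the Fenchel--Young inequality (Lemma~\ref{lemma:more_convex_conjuguate_properties}(i)), and that it is invariant under the substitution $(h, p) \mapsto (h^\star, z - p)$, which is precisely the symmetry encoded in statement (i).

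For the characterization itself, the forward direction would use Lemma~\ref{lemma:technical_lemma_prox} with $\rho = 0$ and $\gamma = 1$: writing $p = \Prox{f}{z}$, it gives $f(p) + \langle z - p, w - p\rangle \le f(w)$ for all $w$. Rearranging into $\langle z - p, w\rangle - f(w) \le \langle z - p, p\rangle - f(p)$ and taking the supremum over $w$ yields $f^\star(z - p) \le \langle z - p, p\rangle - f(p)$; combined with Fenchel--Young this forces equality. For the converse, starting from the equality $f(p) + f^\star(z - p) = \langle p, z - p\rangle$, I would unfold the definition of $f^\star$ to recover the subgradient-type inequality $f(w) \ge f(p) + \langle z - p, w - p\rangle$ for all $w$, and then complete the square:
\begin{align*}
\tfrac{1}{2}\|z - w\|^2 + f(w) \ge \tfrac{1}{2}\|z - p\|^2 + f(p) + \tfrac{1}{2}\|w - p\|^2 \ge \tfrac{1}{2}\|z - p\|^2 + f(p),
\end{align*}
which shows that $p$ is the unique minimizer defining $\Prox{f}{z}$.

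With the characterization in hand, I would finish as follows. For (i) $\Rightarrow$ (ii): from $z = x + y$ we have $y = z - x$, so $f(x) + f^\star(z - x) = \langle x, z - x\rangle$ gives $x = \Prox{f}{z}$; applying the characterization to $h = f^\star$ (convex by Lemma~\ref{lemma:fondamental_properties_convex_conjuguate}(i), with $(f^\star)^\star = f$) and using $z - y = x$ turns the same scalar equality into $y = \Prox{f^\star}{z}$. For (ii) $\Rightarrow$ (i): the characterization applied to $x = \Prox{f}{z}$ yields $f(x) + f^\star(z - x) = \langle x, z - x\rangle$, and reading this equality back through the characterization for $f^\star$ shows $z - x = \Prox{f^\star}{z}$; by uniqueness of the prox this equals $y$, so $z = x + y$ (this is Moreau's identity), and the scalar equality is then exactly $f(x) + f^\star(y) = \langle x, y\rangle$.

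The main obstacle I anticipate lies entirely in the characterization step, and specifically in its converse direction: one must convert the analytic Fenchel--Young equality back into the variational (minimizer) definition of the prox \emph{without} invoking subdifferentials, which is what the completion-of-squares estimate above accomplishes. Everything afterwards is bookkeeping with the involution $f^{\star\star} = f$ and the symmetry $p \leftrightarrow z - p$.
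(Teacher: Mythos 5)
Your proof is correct, and it reorganizes the paper's argument rather than merely reproducing it. The paper proves the two implications directly: for (i) $\Rightarrow$ (ii) it runs exactly your completion-of-squares computation (from $\langle x, y\rangle - f(x) \ge \langle u, y\rangle - f(u)$ to $\frac12\|u-z\|^2 + f(u) - \frac12\|x-u\|^2 \ge \frac12\|x-z\|^2 + f(x)$), and for (ii) $\Rightarrow$ (i) it re-derives the subgradient-type inequality at $x = \Prox{f}{z}$ from scratch via the convex-combination argument with $t \to 0$ --- that is, it inlines the special case $\rho = 0$, $\gamma = 1$ of Lemma~\ref{lemma:technical_lemma_prox} instead of citing it as you do. Factoring everything through the single symmetric characterization $p = \Prox{h}{z} \iff h(p) + h^\star(z-p) = \langle p, z-p\rangle$ buys you three things: it avoids the paper's duplicated work (the paper dispatches $y = \Prox{f^\star}{z}$ with ``a similar computation''); it yields Moreau's identity $\Prox{f}{z} + \Prox{f^\star}{z} = z$ explicitly; and it makes explicit a step the paper leaves implicit in (ii) $\Rightarrow$ (i), namely that after proving $f(x) + f^\star(y') = \langle x, y'\rangle$ for $y' = z - x$ one still must identify $y' = y$, which requires feeding the equality back through the other direction (or, as you do, through the characterization for $f^\star$) and invoking uniqueness of the prox. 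The price of your route is the reliance on the involution $f^{\star\star} = f$ (Lemma~\ref{lemma:fondamental_properties_convex_conjuguate}(iii)), which needs lower semicontinuity of $f$ --- covered by the paper's standing assumptions, but avoidable: the paper's direct computations use only the one-sided Fenchel--Young inequality of Lemma~\ref{lemma:more_convex_conjuguate_properties}(i). Both of your key steps check out: Lemma~\ref{lemma:technical_lemma_prox} with $\rho = 0$, $\gamma = 1$ gives exactly $f(p) + \langle z-p, w-p\rangle \le f(w)$, and your estimate $\frac12\|z-w\|^2 + f(w) \ge \frac12\|z-p\|^2 + f(p) + \frac12\|w-p\|^2$ is the same quadratic-growth inequality the paper obtains, so the two proofs share their analytic core and differ only in architecture.
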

The previous result has been first proved in~\cite[Proposition 4.a]{moreau1965proximite}.
Proposition~\ref{prop:dual_result_prox} shows the intrinsic link between the proximal operator and the convex conjugate. In particular, it demonstrates that the inequality of Lemma~\ref{lemma:more_convex_conjuguate_properties}(i) is attained only for pairs $(x,y)\in\R^d\times\R^d$ that are of the form $(\Prox{f}{z}, \Prox{f^\star}{z})$. Moreover, it also shows  the equality $\Prox{f}{z} + \Prox{f^\star}{z} = z$. 

\begin{proof}
\textbf{(i) $\implies$ (ii)}
For $u \in \R^d$, by the convex conjugate definition, we have
\begin{align*}
    f^\star(y) \ge \langle u, y \rangle - f(u).
\end{align*}
Using that $f(x) + f^\star(y) = \langle x, y \rangle$, we get
\begin{align*}
    \langle x, y \rangle - f(x) \ge \langle u, y \rangle - f(u).
\end{align*}
From the previous inequality, we obtain for $z = x + y$
\begin{align*}
    \frac{1}{2}\|x-z\|^2 + f(x) &= \frac{1}{2} \|x\|^2 +\frac{1}{2} \|z\|^2 - \langle x, z \rangle + f(x) \\
    &\le \frac{1}{2} \|x\|^2 +\frac{1}{2} \|z\|^2 - \langle x, z \rangle + \langle x, y \rangle + f(u) - \langle u, y \rangle \\
    &\le \frac{1}{2} \|x\|^2 +\frac{1}{2} \|z\|^2 - \|x\|^2 + f(u) - \langle u, z \rangle + \langle u, x \rangle \\
    &\le \frac{1}{2} \|u-z\|^2 + f(u) - \frac{1}{2} \|x-u\|^2.
\end{align*}
Then, necessarily $x$ is the only minimum of the functional $u \mapsto \frac{1}{2} \|u-z\|^2 + f(u)$, which means that $x = \Prox{f}{z}$. A similar computation gives that $y = \Prox{f^\star}{z}$.

\textbf{(ii) $\implies$ (i)}
We note $y' = z-x$, with $x = \Prox{f}{z}$. By the convexity of $f$, for $t \in (0,1)$ and $u \in \R^d$, we have
\begin{align*}
    f(tu+(1-t)x)\le tf(u)+(1-t)f(x).
\end{align*}
Then, because $x = \Prox{f}{z}$, we have $\forall u \in \R^d$ and $\forall t \in (0,1)$,
\begin{align*}
    \frac{1}{2} \|x-z\|^2 + f(x) \le \frac{1}{2}\|tu+(1-t)x - z\|^2 + f(tu+(1-t)x).
\end{align*}
By combining the two previous inequalities, we get
\begin{align*}
    \frac{1}{2} \|x-z\|^2 + f(x) &\le \frac{1}{2}\|tu+(1-t)x - z\|^2 + tf(u)+(1-t)f(x), \\
    -\frac{t^2}{2}\|u-x\|^2 + t \langle u-x, y' \rangle &\le t f(u) - t f(x).
\end{align*}
By dividing by $t$ and letting $t \to 0$, we obtain that for all $u \in \R^d$
\begin{align*}
    \langle u-x, y' \rangle \le f(u) - f(x)\\
    \langle u, y' \rangle - f(u)\le  \langle x, y' \rangle- f(x).
\end{align*}
This implies that
\begin{align*}
    f^\star(y') = \langle x, y' \rangle- f(x),
\end{align*}
which ends the proof.
\end{proof}

\section{On the preservation of the convexity by the Moreau envelope}\label{sec:cvx_weakl_cvx_properties_moreau}
In this part, we prove that the Moreau envelope preserves the convexity (Lemma~\ref{lemma:convexity_moreau_envelop}), weak-convexity (Lemma~\ref{lemma:moreau_envelop_weakly_convex}) and strong-convexity (Lemma~\ref{lemma:strong_convexity_moreau_envelop}) properties. We also show that the Moreau envelope convexifies the original function (Lemma~\ref{lemma:measure_of_convexity}).

We will first need the general result of Lemma~\ref{lemma:inf_convex_functions}.

\begin{lemma}\label{lemma:inf_convex_functions}
If the function $(x,y) \in A \times B \to f(x,y)$, with $A,B$ convex sets, is convex w.r.t. the variable $(x,y) \in A \times B$, then the function $x \in A \mapsto \inf_{y\in B} f(x,y)$ is convex.
\end{lemma}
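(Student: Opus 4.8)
The plan is to verify the convexity inequality~\eqref{eq:convexity} directly for $h(x) = \inf_{y \in B} f(x,y)$. Fix $x_1, x_2 \in A$ and $\lambda \in [0,1]$; since $A$ is convex the point $x_\lambda = \lambda x_1 + (1-\lambda)x_2$ lies in $A$, so $h(x_\lambda)$ is well defined, and the goal is to show $h(x_\lambda) \le \lambda h(x_1) + (1-\lambda) h(x_2)$.

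The core idea is that the infimum over $y$ need not be attained, so instead of picking exact minimizers I would work with $\varepsilon$-approximate ones. For any $\varepsilon > 0$, choose $y_1, y_2 \in B$ with $f(x_1, y_1) \le h(x_1) + \varepsilon$ and $f(x_2, y_2) \le h(x_2) + \varepsilon$, which is possible whenever $h(x_1)$ and $h(x_2)$ are finite. Since $B$ is convex, $y_\lambda = \lambda y_1 + (1-\lambda) y_2 \in B$, so $y_\lambda$ is an admissible competitor in the infimum defining $h(x_\lambda)$. Applying the joint convexity of $f$ at the pair $(x_\lambda, y_\lambda) = \lambda(x_1, y_1) + (1-\lambda)(x_2, y_2)$ gives
\begin{align*}
    f(x_\lambda, y_\lambda) \le \lambda f(x_1, y_1) + (1-\lambda) f(x_2, y_2).
\end{align*}

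Chaining the inequalities, I would then bound
\begin{align*}
    h(x_\lambda) \le f(x_\lambda, y_\lambda) \le \lambda f(x_1, y_1) + (1-\lambda) f(x_2, y_2) \le \lambda h(x_1) + (1-\lambda) h(x_2) + \varepsilon,
\end{align*}
where the first inequality is the definition of $h(x_\lambda)$ as an infimum and the last uses the choice of $y_1, y_2$. Letting $\varepsilon \to 0$ yields the desired convexity inequality for $h$.

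The only genuine subtlety, and the step I expect to need the most care, is the treatment of infinite values. If either $h(x_1)$ or $h(x_2)$ equals $-\infty$, the $\varepsilon$-approximate minimizer argument must be replaced by choosing $y_i$ with $f(x_i, y_i)$ arbitrarily large and negative, after which the same chaining forces $h(x_\lambda) = -\infty$ so that the inequality holds trivially; the case $+\infty$ makes the right-hand side $+\infty$ and is immediate. Everything else is the routine convexity computation above, with the convexity of $B$ being precisely what guarantees that the convex combination $y_\lambda$ remains an admissible test point.
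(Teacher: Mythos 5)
Your proof is correct and takes essentially the same route as the paper: both exploit the joint convexity of $f$ at $(x_\lambda, y_\lambda) = \lambda(x_1,y_1) + (1-\lambda)(x_2,y_2)$, with $y_\lambda \in B$ by convexity of $B$, the paper simply compressing your $\varepsilon$-approximate-minimizer chaining into a single ``take the infimum over $y, y'$ on both sides'' step. Your explicit handling of infinite values is in fact slightly more careful than the paper's argument, which tacitly assumes the infima behave well.
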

\begin{proof}
By convexity of $f$, we get that $\forall t \in [0,1]$, $x, x' \in A$, $y,y' \in B$,
\begin{align*}
    f(tx+(1-t)x', ty+(1-t)y') \le t f(x,y) + (1-t) f(x',y').
\end{align*}
By taking the infimum on $y,y' \in B$, we obtain
\begin{align*}
     \inf_{y, y' \in B} f(tx+(1-t)x', ty+(1-t)y') \le t \inf_{y\in B} f(x,y) + (1-t) \inf_{y' \in B} f(x',y').
\end{align*}
Because $B$ is convex, $\{ty+(1-t)y'|y,y' \in B\} \subset B$, so $\inf_{y, y' \in B} f(tx+(1-t)x', ty+(1-t)y') \ge \inf_{y\in B} f(tx+(1-t)x', y)$ and we get the convexity of $x \in A \mapsto \inf_{y\in B} f(x,y)$.
\end{proof}

\begin{lemma}\label{lemma:convexity_moreau_envelop}
If $g$ is convex, then for all $\gamma > 0$, $g^{\gamma}$ is convex.
\end{lemma}
Note that a result on the preservation of the essential strict convexity, i.e. strict convexity on every convex subset of the domain, has been proved in~\cite[Theorem 3.7]{planiden2019proximal}.
\begin{proof}
We denote $v(x, y) = \frac{1}{2\gamma} \|x - y\|^2 + g(y)$. 
It is clear that $v$ is a sum of two functions that are both convex w.r.t. $(x,y)$.
Then by applying Lemma~\ref{lemma:inf_convex_functions}, we get that $g^{\gamma}$ is convex.
\end{proof}

In order to study the weak convexity of the Moreau envelope, we will need the following technical lemma.
\begin{lemma}\label{lemma:technical_result_weakly_convex_moreau_envelop}
If $g$ is $\rho$-weakly convex with $\gamma\rho<1$, and denoting $g_{\rho} = g + \frac{\rho}{2}\|\cdot\|^2$ which is convex, we have
\begin{align*}
    g^\gamma(x) = g_\rho^{\frac{\gamma}{1-\gamma\rho}}\left(\frac{x}{1-\rho\gamma}\right)-\frac\rho{2(1-\rho \gamma)}\|x\|^2,
\end{align*}
and
\begin{align*}
    \Prox{\gamma g}{x} = \Prox{\frac{\gamma}{1-\gamma \rho} g_{\rho}}{\frac{x}{1-\gamma \rho}}.
\end{align*}
\end{lemma}
\begin{proof}
Let us denote as $g_\rho$ the convex function $g_\rho=g+\frac{\rho}2\|\cdot\|^2$. From the definition of the Moreau envelop~\eqref{eq:moreau_env}, one has $g^\gamma=(g_{\rho}-\frac{\rho}2\|\cdot\|^2)\square \frac{1}{2\gamma}\|\cdot\|^2$, so
\begin{align}
    g^\gamma(x)&=\inf_y g_\rho(y)-\frac{\rho}2\|y\|^2 + \frac{1}{2\gamma}\|x-y\|^2\label{eq:prox1}\\
    &=\inf_y\left(g_\rho(y) +\frac{1-\rho\gamma}{2\gamma}\left(\|y\|^2-\frac2{1-\rho\gamma}\langle x,y\rangle +\frac1{(1-\rho \gamma)^2} \|x\|^2\right)  \right) \nonumber\\
    &+\frac1{2\gamma}\left(1-\frac1{(1-\rho \gamma)}\right)\|x\|^2\nonumber\\
    &=\inf_y\left({g_\rho(y) +\frac{1-\rho\gamma}{2\gamma}\left|\left|y-\frac{1}{1-\rho \gamma} x\right|\right|^2} \right) -\frac\rho{2(1-\rho \gamma)}\|x\|^2\label{eq:prox2}\\
    &=g_\rho^{\frac{\gamma}{1-\gamma\rho}}\left(\frac{x}{1-\rho\gamma}\right)-\frac\rho{2(1-\rho \gamma)}\|x\|^2.\nonumber
\end{align}
The minimum point $y \in \R^d$ that solves problems~\eqref{eq:prox1} and~\eqref{eq:prox2} can respectively be written as $\Prox{\gamma g}{x}$ in~\eqref{eq:prox1} (see relation~\eqref{eq:moreau_prox}) and $\Prox{\frac{\gamma}{1-\gamma \rho} g_{\rho}}{\frac{x}{1-\gamma \rho}}$ in~\eqref{eq:prox2}, using the definition of the proximal operator. Thus we get 
\begin{align*}
    \Prox{\gamma g}{x} = \Prox{\frac{\gamma}{1-\gamma \rho} g_{\rho}}{\frac{x}{1-\gamma \rho}}.
\end{align*}
\end{proof}

\begin{lemma}\label{lemma:moreau_envelop_weakly_convex}
If $g$ is $\rho$-weakly convex, with $\gamma \rho< 1$, then $g^{\gamma}$ is $\frac{\rho}{1 - \gamma \rho}$-weakly convex.
\end{lemma}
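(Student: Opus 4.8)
The plan is to reduce the weak convexity of $g^\gamma$ to the \emph{convexity} of the Moreau envelope of a genuinely convex function, exploiting the explicit decomposition already recorded in Lemma~\ref{lemma:technical_result_weakly_convex_moreau_envelop}. Writing $g_\rho = g + \frac{\rho}{2}\|\cdot\|^2$, which is convex since $g$ is $\rho$-weakly convex, that lemma gives the identity
\begin{align*}
    g^\gamma(x) + \frac{\rho}{2(1-\rho\gamma)}\|x\|^2 = g_\rho^{\frac{\gamma}{1-\gamma\rho}}\left(\frac{x}{1-\rho\gamma}\right).
\end{align*}
Recognizing that the coefficient $\frac{\rho}{2(1-\rho\gamma)}$ equals exactly $\frac{1}{2}\cdot\frac{\rho}{1-\gamma\rho}$, the desired conclusion --- that $g^\gamma$ is $\frac{\rho}{1-\gamma\rho}$-weakly convex --- is, by Definition~\ref{def:cvx_def}, precisely the assertion that the right-hand side above is a convex function of $x$.

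So the remaining steps are purely about reading off convexity of that right-hand side. First I would check that the inner Moreau-envelope parameter is admissible: since $\gamma>0$ and $\gamma\rho<1$ we have $1-\gamma\rho>0$, hence $\frac{\gamma}{1-\gamma\rho}>0$, and $g_\rho$ is convex, proper and lower semicontinuous, so its Moreau envelope $g_\rho^{\frac{\gamma}{1-\gamma\rho}}$ is well defined and finite. Then, applying Lemma~\ref{lemma:convexity_moreau_envelop} to the convex function $g_\rho$, the envelope $g_\rho^{\frac{\gamma}{1-\gamma\rho}}$ is itself convex.

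Finally I would note that convexity is preserved under precomposition with the linear map $x \mapsto \frac{x}{1-\rho\gamma}$, so $x \mapsto g_\rho^{\frac{\gamma}{1-\gamma\rho}}\left(\frac{x}{1-\rho\gamma}\right)$ is convex. By the displayed identity this function is exactly $g^\gamma + \frac{\rho}{2(1-\rho\gamma)}\|\cdot\|^2$, which is therefore convex, i.e. $g^\gamma$ is $\frac{\rho}{1-\gamma\rho}$-weakly convex.

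There is essentially no obstacle left at this stage: all the genuine work --- completing the square to isolate the quadratic term and rescale the argument --- has already been carried out in Lemma~\ref{lemma:technical_result_weakly_convex_moreau_envelop}, so the only step to get right is tracking the constants and verifying that $\frac{\rho}{2(1-\rho\gamma)}$ matches the claimed weak-convexity modulus. One could instead attempt a direct argument from the weak-convexity inequality~\eqref{eq:ineq_wk_cvx} applied inside the infimal convolution, but this is more delicate, since the added quadratic must be distributed across the infimum; the reduction above avoids that difficulty entirely.
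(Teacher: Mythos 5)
Your proof is correct and follows essentially the same route as the paper: both invoke Lemma~\ref{lemma:technical_result_weakly_convex_moreau_envelop} to write $g^\gamma(x) = g_\rho^{\frac{\gamma}{1-\gamma\rho}}\bigl(\frac{x}{1-\rho\gamma}\bigr) - \frac{\rho}{2(1-\rho\gamma)}\|x\|^2$ with $g_\rho = g + \frac{\rho}{2}\|\cdot\|^2$ convex, then conclude via the convexity of the Moreau envelope of a convex function (Lemma~\ref{lemma:convexity_moreau_envelop}) composed with the linear scaling. Your write-up is in fact slightly more careful than the paper's, since you explicitly verify the admissibility of the parameter $\frac{\gamma}{1-\gamma\rho}>0$ and spell out the preservation of convexity under linear precomposition, which the paper leaves implicit.
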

Lemma~\ref{lemma:moreau_envelop_weakly_convex} generalizes Lemma~\ref{lemma:convexity_moreau_envelop}, for $\rho>0$. Note that the constant of weak convexity of the Moreau envelope of $g$ is larger than that of $g$ because $\frac{\rho}{1-\gamma\rho} > \rho$.
\begin{proof}

Let us denote by $g_\rho$ the convex function $g_\rho=g+\frac{\rho}2||.||^2$. By Lemma~\ref{lemma:technical_result_weakly_convex_moreau_envelop}, we get
\begin{align*}
    g^\gamma(x)=g_\rho^{\frac{\gamma}{1-\gamma\rho}}\left(\frac{x}{1-\rho\gamma}\right)-\frac\rho{2(1-\rho \gamma)}||x||^2
\end{align*}
Since $g_\rho$ is convex, so does $g_\rho^{\frac{\gamma}{1-\gamma\rho}}\left(\frac{x}{1-\rho\gamma}\right)$ for $\gamma\rho<1$. Hence we get that $g^\gamma + \frac{\rho}{2(1-\gamma\rho)} \|\cdot\|^2$ is convex, so $g^{\gamma}$ is $\frac{\rho}{1-\gamma\rho}$-weakly convex for $\gamma\rho<1$.
\end{proof}
Example~\ref{ex1} provides a direct illustration of this result.
Indeed, for $g(x) = -\frac{\rho}{2} \|x\|^2$, $\rho>0$ and $\gamma \rho < 1$, the Moreau envelop $g^{\gamma}(x) = -\frac{\rho}{2(1 - \gamma \rho)}\|x\|^2$ is $\frac{\rho}{1-\gamma\rho}$-weakly convex.

As shown in the following lemma,  strong convexity is also preserved by the Moreau envelope.\\

\begin{lemma}\label{lemma:strong_convexity_moreau_envelop}
For a function $g$ $\rho$-strongly convex, then $g^{\gamma}$ is $\frac{\rho}{1+\gamma \rho}$ strongly convex.
\end{lemma}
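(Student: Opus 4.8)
The plan is to reduce directly to the weakly convex case already established in Lemma~\ref{lemma:moreau_envelop_weakly_convex}, exploiting the sign convention of Definition~\ref{def:cvx_def}. Since $g$ being $\rho$-strongly convex is by definition the same as $g$ being $(-\rho)$-weakly convex, I would apply Lemma~\ref{lemma:moreau_envelop_weakly_convex} with the weak convexity constant set to $\rho' = -\rho$. The only hypothesis to verify is $\gamma \rho' < 1$, that is $-\gamma\rho < 1$; for $\gamma > 0$ and $\rho > 0$ this holds automatically, so (in contrast to the weakly convex case) no restriction on $\gamma$ is needed and the Moreau envelope is well defined for every $\gamma > 0$.

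With this substitution, Lemma~\ref{lemma:moreau_envelop_weakly_convex} yields that $g^\gamma$ is $\frac{\rho'}{1-\gamma\rho'}$-weakly convex, i.e.
\begin{align*}
    \frac{\rho'}{1-\gamma\rho'} = \frac{-\rho}{1+\gamma\rho}\text{-weakly convex}.
\end{align*}
Reading this back through Definition~\ref{def:cvx_def} (a function is $\mu$-strongly convex iff it is $-\mu$-weakly convex) with $-\mu = \frac{-\rho}{1+\gamma\rho}$, I conclude that $g^\gamma$ is $\frac{\rho}{1+\gamma\rho}$-strongly convex, which is exactly the claim.

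If one prefers a self-contained derivation rather than invoking Lemma~\ref{lemma:moreau_envelop_weakly_convex} as a black box, the same substitution can be run through Lemma~\ref{lemma:technical_result_weakly_convex_moreau_envelop}: writing $g_{-\rho} = g - \frac{\rho}{2}\|\cdot\|^2$, which is convex precisely because $g$ is $\rho$-strongly convex, the identity of that lemma (with $\rho$ replaced by $-\rho$) gives
\begin{align*}
    g^\gamma(x) = g_{-\rho}^{\frac{\gamma}{1+\gamma\rho}}\!\left(\frac{x}{1+\rho\gamma}\right) + \frac{\rho}{2(1+\rho\gamma)}\|x\|^2 .
\end{align*}
The first term is a Moreau envelope of a convex function composed with an affine map, hence convex in $x$, so $g^\gamma - \frac{\rho}{2(1+\gamma\rho)}\|\cdot\|^2$ is convex, giving the stated strong convexity constant. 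There is no serious obstacle here; the only thing to handle with care is the bookkeeping of the sign convention relating weak and strong convexity, and the observation that the constraint $\gamma\rho' < 1$ becomes vacuous once $\rho'$ is negative.
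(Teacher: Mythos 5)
Your proof is correct, and it takes a genuinely different route from the paper's. The paper argues by duality: using Lemma~\ref{lemma:fondamental_properties_convex_conjuguate}(ii) and (iv) it computes $(g^\gamma)^\star = g^\star + \frac{\gamma}{2}\|\cdot\|^2$, invokes Lemma~\ref{lemma:more_convex_conjuguate_properties}(iv) to pass from $\rho$-strong convexity of $g$ to $\frac{1}{\rho}$-smoothness of $g^\star$, hence $\left(\frac{1}{\rho}+\gamma\right)$-smoothness of $(g^\gamma)^\star$, and concludes via biconjugation (Lemma~\ref{lemma:fondamental_properties_convex_conjuguate}(iii)) that $g^\gamma=(g^\gamma)^{\star\star}$ is $\frac{\rho}{1+\gamma\rho}$-strongly convex, the constant appearing as the reciprocal of $\frac{1}{\rho}+\gamma$. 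You instead stay entirely primal: since Definition~\ref{def:cvx_def} allows $\rho\in\R$, $\rho$-strong convexity is exactly $(-\rho)$-weak convexity, the hypothesis $\gamma\rho'<1$ is vacuous for $\rho'=-\rho<0$, and Lemma~\ref{lemma:moreau_envelop_weakly_convex} applied with $\rho'=-\rho$ gives the claim immediately. The one thing that genuinely needed checking is whether the proofs of Lemma~\ref{lemma:technical_result_weakly_convex_moreau_envelop} and Lemma~\ref{lemma:moreau_envelop_weakly_convex} ever use $\rho>0$; they do not (the completion-of-squares identity only requires $1-\gamma\rho>0$, automatic here), and your self-contained second derivation through the identity $g^\gamma(x)=g_{-\rho}^{\gamma/(1+\gamma\rho)}\left(x/(1+\rho\gamma)\right)+\frac{\rho}{2(1+\rho\gamma)}\|x\|^2$ settles exactly this point, so the argument is complete. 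Your route buys elementarity and slightly more generality: it avoids the conjugate calculus entirely and in particular sidesteps Lemma~\ref{lemma:more_convex_conjuguate_properties}(iv), whose statement carries a differentiability hypothesis that the paper's proof applies to a general strongly convex $g$ without comment (repairable via subdifferentials, as the paper's introduction notes, but your argument needs no repair); it also unifies the weakly and strongly convex cases into a single statement under the paper's sign convention. What the paper's route buys in exchange is conceptual: it exhibits the strong-convexity/smoothness duality under conjugation, which is of independent interest.
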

This lemma has been proved in~\cite[Lemma 2.23]{planiden2016strongly}. Notice that the strong convexity of $g^\gamma$ is reduced with respect to that of $g$, as $\frac{\rho}{1+\gamma \rho} < \rho$. This phenomenon is illustrated by Example~\ref{ex1}.
\begin{proof}
Assume that $g$ is $\rho$-strongly convex, then, thanks to Lemma~\ref{lemma:fondamental_properties_convex_conjuguate}(ii) and Lemma~\ref{lemma:fondamental_properties_convex_conjuguate}(iv), $(g^\gamma)^*=(g\Box\frac1{2\gamma}||x||^2)^*=g^*+\frac{\gamma}2||x||^2$  is convex and $\frac1\rho+\gamma=\frac{1+\rho\gamma}{\rho}$-smooth thanks to Lemma~\ref{lemma:more_convex_conjuguate_properties}(iv). Combining Lemma~\ref{lemma:more_convex_conjuguate_properties}(iv) and Lemma~\ref{lemma:fondamental_properties_convex_conjuguate}(iii), we deduce that $(g^\gamma)^{\star \star} = g^\gamma$ is $\frac{\rho}{1+\rho\gamma}$ strongly convex.
\end{proof}

Another way of measuring the convexity of the Moreau envelope has been proposed in~\cite[Lemma 1]{habring2025diffusion} with the non-convexity criteria (NC) of a function defined for $g : \R^d \mapsto \R$ by
\begin{align}\label{eq:nc_criteria}
    \text{NC}(g) = \sup_{x, y \in \R^d, \lambda \in [0, 1]} g(\lambda x + (1 - \lambda y)) - \lambda g(x) - (1 - \lambda) g(y).
\end{align}
By definition, $\text{NC}(g) \le 0$ if and only if $g$ is convex. 
\begin{lemma}\label{lemma:measure_of_convexity}
For $g$ a $\rho$-weakly convex function with $\gamma \rho < 1$, we have $$\text{NC}(g^\gamma) \le \text{NC}(g)$$.
\end{lemma}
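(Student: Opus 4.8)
The plan is to bound the convexity defect of $g^\gamma$ at an arbitrary triple $(a,b,\lambda)$ directly by $\text{NC}(g)$, by feeding a well-chosen competitor into the infimum defining $g^\gamma$ at the convex combination. Set $c = \lambda a + (1-\lambda) b$, and let $p = \Prox{\gamma g}{a}$ and $q = \Prox{\gamma g}{b}$, which exist and are unique since $\gamma\rho < 1$. By the characterization~\eqref{eq:moreau_prox}, $g^\gamma(a) = \frac{1}{2\gamma}\norm{a-p}^2 + g(p)$ and likewise $g^\gamma(b) = \frac{1}{2\gamma}\norm{b-q}^2 + g(q)$.

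First I would use the interpolated point $r = \lambda p + (1-\lambda) q$ as a (generally suboptimal) candidate in the infimum~\eqref{eq:moreau_env} defining $g^\gamma(c)$, which gives $g^\gamma(c) \le \frac{1}{2\gamma}\norm{c-r}^2 + g(r)$. The point of this choice is that $c - r = \lambda(a-p) + (1-\lambda)(b-q)$, so the squared norm expands by the identity $\norm{\lambda u + (1-\lambda)v}^2 = \lambda\norm{u}^2 + (1-\lambda)\norm{v}^2 - \lambda(1-\lambda)\norm{u-v}^2$, producing exactly the quadratic terms appearing in $\lambda g^\gamma(a) + (1-\lambda) g^\gamma(b)$ together with a nonpositive cross-term.

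Next I would control $g(r) = g(\lambda p + (1-\lambda) q)$ using the very definition~\eqref{eq:nc_criteria} of $\text{NC}(g)$, namely $g(\lambda p + (1-\lambda) q) \le \lambda g(p) + (1-\lambda) g(q) + \text{NC}(g)$. Substituting both estimates and regrouping, the quadratic terms and the $g(p), g(q)$ terms reassemble into $\lambda g^\gamma(a) + (1-\lambda) g^\gamma(b)$, leaving
\[
g^\gamma(c) - \lambda g^\gamma(a) - (1-\lambda) g^\gamma(b) \le -\frac{\lambda(1-\lambda)}{2\gamma}\norm{(a-p)-(b-q)}^2 + \text{NC}(g) \le \text{NC}(g),
\]
since the cross-term is nonpositive. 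Taking the supremum over $a,b \in \R^d$ and $\lambda \in [0,1]$ then yields $\text{NC}(g^\gamma) \le \text{NC}(g)$.

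I do not expect a serious obstacle here: the whole argument hinges on the single observation that interpolating the prox points of $a$ and $b$ is admissible in the infimum at $c$, and that the parallelogram expansion of the squared norm contributes a favorable (nonpositive) term. The only role of the hypotheses is to guarantee that $p$ and $q$ are well defined, which is precisely what $\gamma\rho<1$ provides; weak convexity is otherwise not invoked, as the only bound used on $g$ comes directly from $\text{NC}(g)$.
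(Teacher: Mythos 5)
Your proof is correct and follows essentially the same route as the paper: both use $\lambda\,\mathsf{Prox}_{\gamma g}(a) + (1-\lambda)\,\mathsf{Prox}_{\gamma g}(b)$ as a competitor in the infimum defining $g^\gamma$ at the convex combination, bound the quadratic term by the convexity of $\|\cdot\|^2$ (your exact expansion with the nonpositive cross-term $-\frac{\lambda(1-\lambda)}{2\gamma}\|(a-p)-(b-q)\|^2$ is just a sharper form of the same step), and then invoke the definition of $\text{NC}(g)$. Your closing observation that weak convexity enters only through well-posedness of the proximal points is accurate and consistent with the paper's argument.
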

Note that Lemma~\ref{lemma:measure_of_convexity} suggests that the Moreau envelope convexifies the  original function, which balances results such as Lemma~\ref{lemma:moreau_envelop_weakly_convex} or Lemma~\ref{lemma:strong_convexity_moreau_envelop}, which show that the (weak and strong) convex constants may be degraded by the Moreau envelope.  Lemma~\ref{lemma:measure_of_convexity} provides another way to prove Lemma~\ref{lemma:convexity_moreau_envelop} as $g$ convex, \textit{i.e.} $\text{NC}(g) \le 0$ implies that $\text{NC}(g^\gamma) \le 0$, \textit{i.e.} $g^\gamma$ is convex.
\begin{proof}
For $x, y \in \R^d$, $p = \Prox{\gamma g}{x}, q = \Prox{\gamma g}{y}$, $x_{\lambda} = \lambda x + (1-\lambda)y$, for $\lambda \in [0,1]$, and $p_{\lambda} = \lambda p + (1-\lambda)q$. From the Moreau envelope definition and the convexity of the $\|\cdot\|^2$ function, we have
\begin{align*}
    &g^\gamma(x_{\lambda}) - \lambda g^\gamma(x) - (1 - \lambda)g^\gamma(y) \\
    &\le g(p_{\lambda}) + \frac{1}{2\gamma}\|p_{\lambda} - x_\lambda\|^2 - \lambda g(p) - \frac{\lambda}{2\gamma}\|p - x\|^2  - (1 - \lambda) g(q) \\
    &- \frac{(1 - \lambda)}{2\gamma}\|q - y\|^2 \\
    &\le g(p_{\lambda})- \lambda g(p)- (1 - \lambda) g(q).
\end{align*}
Then we obtain
\begin{align*}
    \text{NC}(g^\lambda)&=\sup_{x, y \in \R^d, \lambda \in [0, 1]} g^\gamma(x_{\lambda}) - \lambda g^\gamma(x) - (1 - \lambda)g^\gamma(y) \\
    &\le \sup_{p, q \in \R^d, \lambda \in [0, 1]} g(p_{\lambda})- \lambda g(p)- (1 - \lambda)g(q)\\
    &= \text{NC}(g).
\end{align*}
\end{proof}

\section{Properties of the proximal operator for weakly convex functions}\label{sec:properties_prox_operator}
Based on the previous link between the convex conjugate and the Moreau envelope, we can now deduce some properties on the proximal operator (Proposition~\ref{prop:notation_prox_operator} and Lemma~\ref{lemma:prox_lipschitz_weakly_cvx}) of weakly convex functions. Moreover, we will study how the Moreau envelope preserves strong convexity (Lemma~\ref{lemma:strong_convexity_moreau_envelop}).

\begin{proposition}\label{prop:notation_prox_operator}
For $g$ $\rho$-weakly convex with $\gamma \rho < 1$ and differentiable at $x\in \R^d$, we have
\begin{align}\label{eq:inverse_prox}
    \Prox{\gamma g}{x + \gamma \nabla g(x)} = x.
\end{align}
Moreover, if $g$ is differentiable on $\R^d$, then $\mathsf{Prox}_{\gamma g}$ is bijective, $\mathsf{Prox}_{\gamma g}^{-1} = I_d + \gamma \nabla g$ and $\mathsf{Prox}_{\gamma g}(\R^d)$ is homeomorphic with $\R^d$.
\end{proposition}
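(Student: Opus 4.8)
The plan is to exploit the strong convexity of the proximal objective together with a first-order optimality condition, and then to read off bijectivity and continuity from the resulting functional identities. To establish~\eqref{eq:inverse_prox}, I fix $x \in \R^d$, set $z = x + \gamma \nabla g(x)$, and consider $\phi(y) = \frac{1}{2\gamma}\|z - y\|^2 + g(y)$. Since $\gamma\rho < 1$, this function is $\left(\frac1\gamma - \rho\right)$-strongly convex, hence admits a unique minimizer, namely $\Prox{\gamma g}{z}$. Because $g$ is differentiable at $x$ and the quadratic term is smooth, $\phi$ is differentiable at $x$ with $\nabla\phi(x) = \frac1\gamma(x - z) + \nabla g(x) = 0$ by the choice of $z$. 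The key point is then the first-order inequality for the convex function $\phi$ at the single point $x$ where it is differentiable: letting $t \to 0^+$ in $\phi(x + t(y-x)) \le (1-t)\phi(x) + t\phi(y)$ yields $\phi(y) \ge \phi(x) + \langle\nabla\phi(x), y - x\rangle = \phi(x)$ for every $y \in \R^d$. Hence $x$ is the unique minimizer of $\phi$, which is precisely $\Prox{\gamma g}{z} = x$.

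Next, I assume $g$ is differentiable on all of $\R^d$. Then~\eqref{eq:inverse_prox} reads $\mathsf{Prox}_{\gamma g} \circ (I_d + \gamma\nabla g) = I_d$, which immediately gives the surjectivity of $\mathsf{Prox}_{\gamma g}$ and the injectivity of $I_d + \gamma\nabla g$. For the reverse composition, I would use that for any $z \in \R^d$ the unique minimizer $p = \Prox{\gamma g}{z}$ of the now globally differentiable strongly convex $\phi$ satisfies $\nabla\phi(p) = \frac1\gamma(p - z) + \nabla g(p) = 0$, i.e. $z = p + \gamma\nabla g(p) = (I_d + \gamma\nabla g)(p)$, so that $(I_d + \gamma\nabla g) \circ \mathsf{Prox}_{\gamma g} = I_d$. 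The two identities together show that $\mathsf{Prox}_{\gamma g}$ is a bijection of $\R^d$ with $\mathsf{Prox}_{\gamma g}^{-1} = I_d + \gamma\nabla g$.

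Finally, for the homeomorphism claim, $\mathsf{Prox}_{\gamma g}$ is continuous by Proposition~\ref{prop:derivative_in_gamma}. Its inverse $I_d + \gamma\nabla g$ is continuous because $g_\rho = g + \frac\rho2\|\cdot\|^2$ is convex and differentiable on $\R^d$, and the gradient of a finite convex function that is everywhere differentiable is continuous; hence $\nabla g = \nabla g_\rho - \rho\, I_d$ is continuous. A continuous bijection with continuous inverse is a homeomorphism, and since $\mathsf{Prox}_{\gamma g}$ is onto we have $\mathsf{Prox}_{\gamma g}(\R^d) = \R^d$, which yields the stated homeomorphism.

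The step I expect to be the main obstacle is the first part: the optimality condition must be derived and used while $g$ is differentiable only at the single point $x$, so the argument cannot rely on a global gradient of $g$ and must instead invoke the first-order inequality for a convex function at a point of differentiability. The continuity of $\nabla g$ in the last paragraph is the secondary delicate point, since it rests on the regularity of gradients of convex functions rather than on any explicit smoothness hypothesis.
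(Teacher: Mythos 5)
Your proof is correct, but it follows a genuinely different route from the paper's. For the identity \eqref{eq:inverse_prox}, the paper first treats convex $g$ via the Fenchel equality of Lemma~\ref{lemma:more_convex_conjuguate_properties}(ii) combined with the duality result of Proposition~\ref{prop:dual_result_prox}, and then transfers the result to weakly convex $g$ through the rescaling identity $\Prox{\gamma g}{x} = \Prox{\frac{\gamma}{1-\gamma\rho} g_\rho}{\frac{x}{1-\gamma\rho}}$ of Lemma~\ref{lemma:technical_result_weakly_convex_moreau_envelop}; you instead argue directly on the strongly convex objective $\phi$, using the first-order inequality of a convex function at its single point of differentiability to conclude that $x$ is the unique minimizer. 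Your argument handles weak convexity natively, needs no conjugacy and no reduction to the convex case, and correctly isolates the one delicate point, namely that $g$ is differentiable only at $x$, so the optimality condition must come from the directional-derivative limit rather than from a global gradient. For bijectivity, the paper invokes the bijectivity of the gradient of the strongly convex function $\frac{1}{2}\|\cdot\|^2+\gamma g$ (misprinted as ``$I_d+\gamma g$'' in the source), whereas you obtain both composition identities $\mathsf{Prox}_{\gamma g}\circ(I_d+\gamma\nabla g)=I_d$ and $(I_d+\gamma\nabla g)\circ\mathsf{Prox}_{\gamma g}=I_d$ directly from \eqref{eq:inverse_prox} and the stationarity of the minimizer, which avoids justifying surjectivity of the gradient map separately. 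Finally, for the homeomorphism, the paper's ``continuous and bijective, hence a homeomorphism'' strictly requires Brouwer's invariance of domain; your explicit continuity of the inverse, via the classical fact that an everywhere-differentiable finite convex function is $\mathcal{C}^1$ (applied to $g_\rho$), closes that step by elementary means and also makes transparent that $\mathsf{Prox}_{\gamma g}(\R^d)=\R^d$, so the stated homeomorphism is immediate.
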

\begin{proof}
We first demonstrate relation~\eqref{eq:inverse_prox} for convex functions $g$ and then use it for weakly convex ones.
\begin{itemize}
    \item 
For $g$ convex and $x \in \R^d$, by Lemma~\ref{lemma:more_convex_conjuguate_properties}(ii), we have for $y = \nabla g(x)$, $g(x) + g^\star(y) = \langle x, y\rangle$. Then by Proposition~\ref{prop:dual_result_prox}, we get $x = \Prox{g}{x + \nabla g(x)}$. Then, if $g$ is convex, $\gamma g$ is convex and we get $\forall \gamma > 0$, 
\begin{equation}\label{rel:g_conv}
    \Prox{\gamma g}{x + \gamma \nabla g(x)} = x.
\end{equation}
 \item 
If $g$ is $\rho$-weakly convex with $\gamma \rho < 1$, then we introduce the convex function  $g_{\rho} = g + \frac{\rho}{2}\|\cdot\|^2$. By Lemma~\ref{lemma:technical_result_weakly_convex_moreau_envelop}, we have
\begin{align*}
    \Prox{\gamma g}{x} = \Prox{\frac{\gamma}{1-\gamma \rho} g_{\rho}}{\frac{x}{1-\gamma \rho}}.
\end{align*}
Then 
\begin{align*}
    \Prox{\gamma g}{x + \gamma \nabla g(x)} &= \Prox{\frac{\gamma}{1-\gamma \rho} g_{\rho}}{\frac{x + \gamma \nabla g(x)}{1-\gamma \rho}} \\
    &= \Prox{\frac{\gamma}{1-\gamma \rho} g_{\rho}}{\frac{x + \gamma \nabla g_{\rho}(x) - \gamma \rho x}{1-\gamma \rho}} \\
    &= \Prox{\frac{\gamma}{1-\gamma \rho} g_{\rho}}{x + \frac{\gamma}{1-\gamma \rho} \nabla g_{\rho}(x)} \\
    &= x.
\end{align*}
The last equality is obtained by applying~\eqref{rel:g_conv} on the convex function $g_{\rho}$.
\end{itemize}

Now assuming $g$ differentiable on $\R^d$, we have $\forall x \in \R^d$ 
\begin{align*}
    \Prox{\gamma g}{x + \gamma \nabla g(x)} = x.
\end{align*}
Since $g$ is $\rho$-weakly convex, $I_d + \gamma g$ is $1 - \gamma \rho$ strongly convex. Thus, its gradient $I_d + \gamma \nabla g$ is bijective. We obtain that $\mathsf{Prox}_{\gamma g}$ is bijective and its inverse is $I_d + \gamma \nabla g$.

By Proposition~\ref{prop:derivative_in_gamma}, $x \in \R^d \mapsto \Prox{\gamma g}{x}$ is continuous. Moreover, it is bijective, so it is a homeomorphism. Therefore, its image $\Prox{\gamma g}{\R^d}$ is homeomorphic to $\R^d$ and, in particular, it is an open set.
\end{proof}

\begin{lemma}\label{lemma:expansivity_prox}
    For $g$ $\rho$-weakly convex with $\gamma \rho < 1$ and $x, y \in \R^d$, we have
    \begin{align}
        \|\Prox{\gamma g}{x} - \Prox{\gamma g}{y}\|^2 \le \frac{1}{1 - \gamma \rho} \langle x - y, \Prox{\gamma g}{x} - \Prox{\gamma g}{y} \rangle
    \end{align}
\end{lemma}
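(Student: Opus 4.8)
The plan is to apply the technical Lemma~\ref{lemma:technical_lemma_prox} twice, once for each of the two input points, and then add the two inequalities so that all occurrences of $g$ cancel. Write $p = \Prox{\gamma g}{x}$ and $q = \Prox{\gamma g}{y}$; these are well defined since $\gamma\rho < 1$. Lemma~\ref{lemma:technical_lemma_prox} is tailored for this purpose: it controls a quantity involving $g$ at a proximal point together with a single inner product by the value of $g$ at an arbitrary test point, up to a quadratic correction.

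First I would invoke Lemma~\ref{lemma:technical_lemma_prox} for the point $x$ (whose proximal point is $p$), taking the test point to be $q$, which gives $g(p) + \frac{1}{\gamma}\langle x - p, q - p\rangle \le g(q) + \frac{\rho}{2}\|q - p\|^2$. Symmetrically, applying the lemma for the point $y$ (with proximal point $q$) and test point $p$ gives $g(q) + \frac{1}{\gamma}\langle y - q, p - q\rangle \le g(p) + \frac{\rho}{2}\|p - q\|^2$. Adding these two relations, the terms $g(p)$ and $g(q)$ appear on both sides and cancel, leaving $\frac{1}{\gamma}\bigl(\langle x - p, q - p\rangle + \langle y - q, p - q\rangle\bigr) \le \rho\|p - q\|^2$.

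The only genuine computation is to simplify the sum of inner products. Setting $u = p - q$, a direct expansion gives $\langle x - p, q - p\rangle + \langle y - q, p - q\rangle = \|u\|^2 - \langle x - y, u\rangle$, so the combined inequality reads $\|u\|^2 - \langle x - y, u\rangle \le \gamma\rho\|u\|^2$, that is $(1 - \gamma\rho)\|u\|^2 \le \langle x - y, u\rangle$. Dividing by $1 - \gamma\rho$, which is strictly positive precisely because $\gamma\rho < 1$, yields the claimed bound with $u = \Prox{\gamma g}{x} - \Prox{\gamma g}{y}$.

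I do not expect a real obstacle here, since Lemma~\ref{lemma:technical_lemma_prox} does the heavy lifting; the structural observation is that the symmetric roles of $x$ and $y$ make the $g$-values drop out upon summation. The only points requiring care are tracking the signs in the inner-product expansion and using the hypothesis $\gamma\rho < 1$ to keep the dividing factor positive.
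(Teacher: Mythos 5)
Your proposal is correct and is essentially identical to the paper's proof: both apply Lemma~\ref{lemma:technical_lemma_prox} twice with the roles of $x$ and $y$ (hence of $p = \Prox{\gamma g}{x}$ and $q = \Prox{\gamma g}{y}$) swapped, sum the two inequalities so the $g(p)$ and $g(q)$ terms cancel, and rearrange the inner products to obtain $(1-\gamma\rho)\|p-q\|^2 \le \langle x-y,\, p-q\rangle$. Your algebraic simplification via $u = p - q$ matches the paper's computation, and your use of $\gamma\rho < 1$ to divide by $1-\gamma\rho$ is exactly the final step there.
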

Lemma~\ref{lemma:expansivity_prox} shows that $\mathsf{Prox}_{\gamma g}$ is $(1 - \gamma \rho)$-cocoercive which implies its Lipschitzness and and its monotonic characters. It has been proved for instance in~\cite[Proposition 3.3]{hoheiselproximal}.

\begin{proof}
For $x, y \in \R^d$, $p = \Prox{\gamma g}{x}$ and $q = \Prox{\gamma g}{y}$, by Lemma~\ref{lemma:technical_lemma_prox} we have
\begin{align*}
    g(p) +\frac{1}{\gamma} \langle x - p, q - p \rangle \le g(q) + \frac{\rho}{2} \|q - p\|^2 \\
    g(q) +\frac{1}{\gamma} \langle y - q, p - q \rangle \le g(p) + \frac{\rho}{2} \|q - p\|^2.
\end{align*}
Summing the two previous inequalities gives
\begin{equation*}
\begin{split}
    \frac{1}{\gamma} \langle q - p + x - y, q - p \rangle &\le \rho \|q - p\|^2 \\
    \left(1 - \gamma\rho \right) \|q - p\|^2 &\le \langle x - y, p - q \rangle. 
\end{split}
\end{equation*}\
\end{proof}

\begin{lemma}\label{lemma:prox_lipschitz_weakly_cvx}
For $g$ $\rho$-weakly convex with $\gamma \rho < 1$, $\mathsf{Prox}_{\gamma g}$ is $\frac{1}{1-\gamma \rho}$-Lipschitz.
\end{lemma}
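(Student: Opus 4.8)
The plan is to invoke the cocoercivity estimate of Lemma~\ref{lemma:expansivity_prox} and reduce to Lipschitzness by a single application of the Cauchy--Schwarz inequality. Since the hypotheses ($g$ is $\rho$-weakly convex and $\gamma\rho<1$) are exactly those of Lemma~\ref{lemma:expansivity_prox}, I would start directly from the bound
\begin{align*}
    \|\Prox{\gamma g}{x} - \Prox{\gamma g}{y}\|^2 \le \frac{1}{1 - \gamma \rho} \langle x - y, \Prox{\gamma g}{x} - \Prox{\gamma g}{y} \rangle.
\end{align*}

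Next, I would bound the inner product on the right-hand side using Cauchy--Schwarz, namely $\langle x - y, \Prox{\gamma g}{x} - \Prox{\gamma g}{y} \rangle \le \|x-y\|\,\|\Prox{\gamma g}{x} - \Prox{\gamma g}{y}\|$, which yields
\begin{align*}
    \|\Prox{\gamma g}{x} - \Prox{\gamma g}{y}\|^2 \le \frac{1}{1 - \gamma \rho}\, \|x-y\|\,\|\Prox{\gamma g}{x} - \Prox{\gamma g}{y}\|.
\end{align*}
Finally, if $\|\Prox{\gamma g}{x} - \Prox{\gamma g}{y}\| \neq 0$, I would divide both sides by this quantity to obtain the desired Lipschitz estimate with constant $\frac{1}{1-\gamma\rho}$; the case $\|\Prox{\gamma g}{x} - \Prox{\gamma g}{y}\| = 0$ makes the inequality hold trivially.

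There is essentially no obstacle here: the entire content of the statement is packaged in the cocoercivity inequality proved in the previous lemma, and passing from cocoercivity to Lipschitz continuity is the standard one-line Cauchy--Schwarz argument. The only point requiring a word of care is the division step, which is why I separate out the degenerate case where the two proximal points coincide. I note that the constant obtained this way, $\frac{1}{1-\gamma\rho}$, is consistent with the earlier remark following Lemma~\ref{lemma:expansivity_prox} that $(1-\gamma\rho)$-cocoercivity implies $\frac{1}{1-\gamma\rho}$-Lipschitzness.
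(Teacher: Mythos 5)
Your proposal is correct and matches the paper's proof exactly: the paper likewise obtains the result by applying the Cauchy--Schwarz inequality to the cocoercivity bound of Lemma~\ref{lemma:expansivity_prox}. Your explicit handling of the degenerate case $\Prox{\gamma g}{x} = \Prox{\gamma g}{y}$ before dividing is a minor point of care the paper leaves implicit.
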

Note that in particular, if $g$ is convex, then $\rho=0$ and $\forall \gamma > 0$, $\mathsf{Prox}_{\gamma g}$ is $1$-Lipschitz. The fact that the proximal operator is Lipschitz is important for guaranteeing its stability. The proof of this Lemma~\ref{lemma:prox_lipschitz_weakly_cvx} can be found in~\cite[Proposition 2]{gribonval2020characterization} where the authors used~\cite[Proposition 5.b]{moreau1965proximite}.
\begin{proof}
By applying the Cauchy-Schwarz inequality in Lemma~\ref{lemma:expansivity_prox}, we directly obtain Lemma~\ref{lemma:prox_lipschitz_weakly_cvx}.
\end{proof}

\section{The Moreau envelope is differentiable and preserves the minimizers and the critical points}\label{sec:moreau_diff_optim}
In this section, we prove (Proposition~\ref{prop:nabla_g_weakly_cvx_appendix}) that the Moreau envelope is differentiable even if $g$ is not. This result also allows to interpret the proximal operator as a gradient-step on the Moreau envelope.  Then, we show that the Moreau envelope has Lipschitz gradient (Lemma~\ref{lemma:moreau_envelop_smooth}),  which is an important regularity property to ensure the stability of optimization algorithms. Finally, we prove that the Moreau envelope preserves the  minimal points (Proposition~\ref{prop:same_argmin}) and the critical points (Lemma~\ref{lemma:same_critical_points}) of the original function. 

\begin{proposition}\label{prop:nabla_g_weakly_cvx_appendix}
For a $\rho$-weakly convex function $g$ and for $\gamma \rho < 1$, we have
\begin{align*}
    \nabla g^{\gamma}(x) = \frac{1}{\gamma} \left( x - \Prox{\gamma g}{x} \right).
\end{align*}
\end{proposition}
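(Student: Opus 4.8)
The plan is to establish differentiability and identify the gradient simultaneously by sandwiching the increment $g^\gamma(x+h)-g^\gamma(x)$ between two quadratic expressions and showing they agree up to $o(\|h\|)$. Fix $x\in\R^d$, write $p=\Prox{\gamma g}{x}$ and, for an increment $h$, write $p_h=\Prox{\gamma g}{x+h}$. The two facts I will exploit are, first, the exact characterization~\eqref{eq:moreau_prox}, which gives $g^\gamma(x)=g(p)+\frac1{2\gamma}\|x-p\|^2$ and $g^\gamma(x+h)=g(p_h)+\frac1{2\gamma}\|x+h-p_h\|^2$; and second, that the infimum defining the Moreau envelope at any point provides an upper bound when evaluated at any fixed test point.

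For the upper bound, I would evaluate the infimum defining $g^\gamma(x+h)$ at the suboptimal test point $y=p$, yielding $g^\gamma(x+h)\le g(p)+\frac1{2\gamma}\|x+h-p\|^2$, and subtract the exact value of $g^\gamma(x)$. Expanding $\|x+h-p\|^2-\|x-p\|^2$ gives
\begin{align*}
    g^\gamma(x+h)-g^\gamma(x)\le \frac1\gamma\langle x-p,h\rangle+\frac1{2\gamma}\|h\|^2.
\end{align*}
Symmetrically, for the lower bound I would evaluate the infimum defining $g^\gamma(x)$ at $y=p_h$, obtaining $g^\gamma(x)\le g(p_h)+\frac1{2\gamma}\|x-p_h\|^2$, and subtract this from the exact value of $g^\gamma(x+h)$; the same expansion yields
\begin{align*}
    g^\gamma(x+h)-g^\gamma(x)\ge \frac1\gamma\langle x-p_h,h\rangle+\frac1{2\gamma}\|h\|^2.
\end{align*}

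Combining the two bounds and subtracting the candidate linear term $\frac1\gamma\langle x-p,h\rangle$, the increment minus this linear term is trapped between $\frac1\gamma\langle p-p_h,h\rangle+\frac1{2\gamma}\|h\|^2$ and $\frac1{2\gamma}\|h\|^2$. The quadratic terms are manifestly $o(\|h\|)$, so the only delicate quantity is $\frac1\gamma\langle p-p_h,h\rangle$. Here I would invoke Lemma~\ref{lemma:prox_lipschitz_weakly_cvx}, which gives $\|p-p_h\|\le\frac1{1-\gamma\rho}\|h\|$; together with Cauchy--Schwarz this bounds $|\langle p-p_h,h\rangle|$ by $\frac1{1-\gamma\rho}\|h\|^2$, hence this term is $O(\|h\|^2)=o(\|h\|)$ as well. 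The main obstacle is precisely this step: without control on how $p_h$ approaches $p$, the lower bound would carry an unresolved linear contribution, so the Lipschitz estimate on the proximal operator is what makes the sandwich tight. Concluding, $g^\gamma(x+h)-g^\gamma(x)=\frac1\gamma\langle x-p,h\rangle+o(\|h\|)$, which establishes that $g^\gamma$ is differentiable at $x$ with $\nabla g^\gamma(x)=\frac1\gamma(x-\Prox{\gamma g}{x})$.
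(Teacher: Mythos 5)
Your proof is correct, and it is free of circularity: Lemma~\ref{lemma:prox_lipschitz_weakly_cvx} is derived in the paper from Lemma~\ref{lemma:technical_lemma_prox} via the cocoercivity estimate of Lemma~\ref{lemma:expansivity_prox}, none of which uses this proposition. Your route shares the paper's upper bound (testing the infimum defining $g^\gamma(x+h)$ at the suboptimal point $p=\Prox{\gamma g}{x}$), but the lower bound is handled by a genuinely different mechanism. The paper never touches $\Prox{\gamma g}{x+h}$: it sets $h(u)=g^\gamma(x+u)-g^\gamma(x)-\frac1\gamma\langle x-p,u\rangle$, invokes the $\frac{\rho}{1-\rho\gamma}$-weak convexity of the envelope (Lemma~\ref{lemma:moreau_envelop_weakly_convex}) so that $h(u)+\frac{\rho}{2(1-\rho\gamma)}\|u\|^2$ is convex, and uses the midpoint inequality to get $h(u)\ge -h(-u)-\frac{\rho}{1-\rho\gamma}\|u\|^2$, recycling the upper quadratic bound at $-u$ into a lower quadratic bound at $u$. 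You instead compare exact and suboptimal values at the perturbed point via $p_h=\Prox{\gamma g}{x+h}$, and pay for the resulting linear residue $\frac1\gamma\langle p-p_h,h\rangle$ with the Lipschitz estimate $\|p-p_h\|\le\frac1{1-\gamma\rho}\|h\|$. Each approach buys something: the paper's symmetry trick requires no regularity of the proximal map at all, only the weak-convexity transfer to $g^\gamma$; your argument requires continuity of the prox at $x$ (the Lipschitz constant is more than enough — mere continuity would already give $\langle p-p_h,h\rangle=o(\|h\|)$), and in exchange delivers an explicit two-sided $O(\|h\|^2)$ expansion with constant $\max\bigl(\frac1{2\gamma},\frac1{\gamma(1-\gamma\rho)}+\frac1{2\gamma}\bigr)$, which already points toward the smoothness statement of Lemma~\ref{lemma:moreau_weakly_cvx_env_smooth}. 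One presentational nit: the claim that the quadratic terms are ``manifestly $o(\|h\|)$'' and the appeal to Lemma~\ref{lemma:prox_lipschitz_weakly_cvx} together constitute the whole proof, so you could state the final sandwich inequality explicitly rather than describing it in prose, but nothing is missing mathematically.
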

Combining Proposition~\ref{prop:nabla_g_weakly_cvx_appendix}  with Proposition~\ref{prop:derivative_in_gamma} proves that the Moreau envelope solves an Hamilton-Jacobi equation~\cite[Theorem 5, Section 3.3.2]{evans2022partial} for $g$ $\rho$-weakly convex, i.e.
\begin{align}
    \frac{\partial g^{\gamma}}{\partial \gamma}(x) + \frac{1}{2} \|\nabla g^{\gamma}(x)\|^2 = 0.
\end{align}
The fact that the Moreau envelope satisfies a Hamilton-Jacobi equation has motivated its use for regularization in Hilbert spaces~\cite{lasry1986remark, daniilidis2018explicit} to solve, for instance, viscosity equations~\cite{crandall1983viscosity}.
\begin{proof}
Our proof is based on the strategy detailed in~\cite[Theorem 2.6]{rockafellar2009variational} for $g$ convex, which we generalize for $g$ $\rho$-weakly convex. This lemma is formulated in the weakly convex setting in~\cite[Lemma 2.2]{boct2023alternating} and in~\cite[Lemma 2.2]{davis2019stochastic} without proofs. A proof can be found in~\cite[Lemma 3]{habring2025diffusion}.

For $x \in \R^d$, we introduce $v = \Prox{\gamma g}{x}$, $w = \frac{1}{\gamma} \left( x - v \right)$ and $h(u) = g^{\gamma}(x + u) - g^{\gamma}(x) - \langle w, u \rangle$. We aim at proving that $g^{\gamma}$ is differentiable at $x$ with $\nabla g^{\gamma}(x) = w$, which is equivalent to $h$ differentiable at $0$ and $\nabla h(0) = 0$.

By definition of the Moreau envelope, since $v = \Prox{\gamma g}{x}$, we have $g^{\gamma}(x) = \frac{1}{2\gamma} \|x - v\|^2 + g(v)$ and $g^{\gamma}(x+u) \le \frac{1}{2\gamma} \|x + u - v\|^2 + g(v)$. So, we get
\begin{align}
    h(u) &\le \frac{1}{2\gamma} \|x + u - v\|^2 - \frac{1}{2\gamma} \|x - v\|^2 - \langle w, u \rangle \nonumber\\
    &\le \frac{1}{2\gamma} \|u\|^2 + \frac{1}{\gamma} \langle x - v, u \rangle - \langle w, u \rangle \nonumber\\
    &\le \frac{1}{2\gamma} \|u\|^2.\label{eq:tmp}
\end{align}

Moreover, as $g^{\gamma}(x + u)$ is $\frac{\rho}{1 - \rho \gamma}$-weakly convex (Lemma~\ref{lemma:moreau_envelop_weakly_convex}), we have that    $h(u) + \frac{\rho}{2(1 - \rho \gamma)}\|u\|^2 = g^{\gamma}(x + u) + \frac{\rho}{2(1 - \rho \gamma)}\|u\|^2 - g^{\gamma}(x) - \langle w, u \rangle$ is convex in~$u$. 
We thus obtain
\begin{align*}
    \frac{1}{2}\left(h(u) + \frac{\rho}{2(1 - \rho \gamma)} \|u\|^2 + h(-u) + \frac{\rho}{2(1 - \rho \gamma)} \|u\|^2 \right) \ge h(0) = 0 \\
    h(u) \ge -\frac{\rho}{(1 - \rho \gamma)} \|u\|^2 - h(-u) \ge \left( - \frac{\rho}{1 - \rho \gamma} - \frac{1}{2\gamma} \right) \|u\|^2
\end{align*}
which leads to
\begin{align*}
     \left( - \frac{\rho}{1 - \rho \gamma} - \frac{1}{2\gamma} \right) \|u\|^2 \le h(u) \le \frac{1}{2\gamma}\|u\|^2.
\end{align*}
Therefore $h$ is differentiable at $0$ and $\nabla h(0) = 0$. This concludes the proof.
\end{proof}

We can deduce that the Moreau envelope is smooth, \textit{i.e.} $\nabla g^{\gamma}$ is Lipschitz.

\begin{lemma}\label{lemma:moreau_weakly_cvx_env_smooth}
If $g$ is $\rho$-weakly convex, with $\gamma \rho < 1$, then $g^{\gamma}$ is $L$ smooth, \textit{i.e.} $\nabla g^\gamma$ is $L$-Lipschitz, with
\begin{align*}
    L = \left\{
    \begin{array}{ll}
        \frac{1}{\gamma} & \mbox{if } \gamma \rho \le \frac{1}{2} \\
        \frac{\rho}{1 - \gamma \rho} & \mbox{if } \gamma \rho \ge \frac{1}{2}.
    \end{array}
\right.
\end{align*}
\end{lemma}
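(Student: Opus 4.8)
The plan is to reduce the smoothness of $g^\gamma$ entirely to properties of the proximal operator via the gradient formula already established. First I would invoke Proposition~\ref{prop:nabla_g_weakly_cvx_appendix} to write, for $x,y\in\R^d$,
\[
\nabla g^\gamma(x) - \nabla g^\gamma(y) = \frac{1}{\gamma}\left( (x-y) - \left(\Prox{\gamma g}{x} - \Prox{\gamma g}{y}\right)\right).
\]
Setting $u = x-y$ and $v = \Prox{\gamma g}{x} - \Prox{\gamma g}{y}$, the claim reduces to bounding $\frac{1}{\gamma}\|u-v\|$ by $L\|u\|$, i.e.\ to bounding $\|u-v\|^2$ by $(\gamma L)^2\|u\|^2$.

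Next I would expand $\|u-v\|^2 = \|u\|^2 - 2\langle u,v\rangle + \|v\|^2$ and control the cross term using the $(1-\gamma\rho)$-cocoercivity of $\mathsf{Prox}_{\gamma g}$ from Lemma~\ref{lemma:expansivity_prox}, which gives $\langle u,v\rangle \ge (1-\gamma\rho)\|v\|^2$. Substituting yields the single clean inequality
\[
\|u-v\|^2 \le \|u\|^2 + (2\gamma\rho - 1)\|v\|^2 .
\]
The sign of the coefficient $2\gamma\rho - 1$ is exactly what splits the statement into its two cases.

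When $\gamma\rho \le \tfrac12$ the coefficient is non-positive, so $\|u-v\|^2 \le \|u\|^2$ immediately, giving $\frac{1}{\gamma}\|u-v\| \le \frac{1}{\gamma}\|u\|$ and hence $L = \frac{1}{\gamma}$. When $\gamma\rho \ge \tfrac12$ the coefficient is non-negative, so I would bound $\|v\|^2$ from above using the $\frac{1}{1-\gamma\rho}$-Lipschitz estimate of Lemma~\ref{lemma:prox_lipschitz_weakly_cvx}, namely $\|v\| \le \frac{1}{1-\gamma\rho}\|u\|$. The resulting constant simplifies through the identity $1 + \frac{2\gamma\rho-1}{(1-\gamma\rho)^2} = \frac{(\gamma\rho)^2}{(1-\gamma\rho)^2}$, which leads to $\|u-v\| \le \frac{\gamma\rho}{1-\gamma\rho}\|u\|$ and therefore $\frac{1}{\gamma}\|u-v\| \le \frac{\rho}{1-\gamma\rho}\|u\|$, i.e.\ $L = \frac{\rho}{1-\gamma\rho}$.

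There is no serious obstacle here; the only point requiring care is that the two available prox estimates must be used in opposite directions. Cocoercivity disposes of the cross term in both regimes, whereas the plain Lipschitz bound is invoked solely to absorb the $\|v\|^2$ term in the regime $\gamma\rho \ge \tfrac12$, where its contribution carries the ``wrong'' sign. The minor algebraic simplification of the coefficient into $\frac{(\gamma\rho)^2}{(1-\gamma\rho)^2}$ is the one computation I would verify explicitly, and it is what pins down the exact constant $\frac{\rho}{1-\gamma\rho}$ claimed in the statement.
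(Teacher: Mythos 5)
Your proposal is correct and follows essentially the same route as the paper's proof: both start from $\nabla g^\gamma = \frac{1}{\gamma}(\mathrm{Id} - \mathsf{Prox}_{\gamma g})$ (Proposition~\ref{prop:nabla_g_weakly_cvx_appendix}), use the cocoercivity of Lemma~\ref{lemma:expansivity_prox} to handle the cross term, split on the sign at $\gamma\rho = \frac12$, and invoke the Lipschitz bound of Lemma~\ref{lemma:prox_lipschitz_weakly_cvx} only in the regime $\gamma\rho \ge \frac12$. The only difference is cosmetic bookkeeping: you absorb cocoercivity directly into the cross term to get the single inequality $\|u-v\|^2 \le \|u\|^2 + (2\gamma\rho-1)\|v\|^2$, whereas the paper rearranges the expansion so that cocoercivity cancels the $\|p-q\|^2$ term and then applies Cauchy--Schwarz to the leftover inner product; both yield the identical constants.
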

Hence $\nabla g^\gamma$ is $\max\left(\frac{1}{\gamma}, \frac{\rho}{1-\gamma\rho} \right)$-Lipschitz as noted in~\cite[Lemma 3.1]{bohm2021variable}. In the literature, a sub-optimal Lipschitz constant for $\nabla g^\gamma$ has been used $\frac{2 - \rho \gamma}{\gamma(1 - \rho \gamma)} > \max\left(\frac{1}{\gamma}, \frac{\rho}{1-\gamma\rho} \right)$, see for instance~\cite[Corollary 1]{habring2025diffusion} or~\cite[Proposition E.C.2]{sun2022algorithmsdifferenceofconvexdcprograms}. To the best of our knowledge, the Lipschitz constant defined in Lemma~\ref{lemma:moreau_weakly_cvx_env_smooth} is the sharpest known in the literature.\\

\begin{proof}
We rely on the proof provided in~\cite[Corollary 3.4]{hoheiselproximal}.
For $x, y \in \R^d$, $p = \Prox{\gamma g}{x}$ and $q = \Prox{\gamma g}{y}$, by applying Lemma~\ref{lemma:expansivity_prox}, we have
\begin{align*}
    &\|(x - p)  - (y - q)\|^2 = \|x - y\|^2 + \|p - q\|^2 + 2 \langle x - y, q - p \rangle \\
    &= \|x - y\|^2 + \left( \frac{1}{1 -\gamma \rho} - 2 \right) \langle x - y, p - q \rangle + \|p - q\|^2 - \frac{1}{1 -\gamma \rho} \langle x - y, p-q \rangle \\
    &\le \|x - y\|^2 + \left( \frac{1}{1 -\gamma \rho} - 2 \right) \langle x - y, p - q \rangle.
\end{align*}

For $\gamma \rho \le \frac{1}{2}$, we have $\frac{1}{1 -\gamma \rho} - 2 \le 0$. Due to Lemma~\ref{lemma:expansivity_prox}, $\langle x - y, p - q \rangle \ge 0$, so we get
\begin{align*}
    \|(x - p)  - (y - q)\|^2 \le \|x - y\|^2.
\end{align*}
So, by Proposition~\ref{prop:nabla_g_weakly_cvx_appendix} for $\gamma \rho \le \frac{1}{2}$, we have
\begin{align*}
    \|\nabla g^\gamma(x)  - \nabla g^\gamma(x)\| \le \frac{1}{\gamma} \|x - y\|.
\end{align*}

For $\gamma \rho \ge \frac{1}{2}$, thanks to the Cauchy-Schwarz inequality and Lemma~\ref{lemma:prox_lipschitz_weakly_cvx}, we have
\begin{align*}
    \|(x - p)  - (y - q)\|^2 &\le \|x - y\|^2 + \left( \frac{1}{1 -\gamma \rho} - 2 \right) \| x - y\| \|p - q\| \\
    &\le \|x - y\|^2 + \left( \frac{1}{1 -\gamma \rho} - 2 \right) \frac{1}{1 -\gamma \rho} \| x - y\|^2 \\
    &= \left(\frac{\gamma \rho}{1 -\gamma \rho}\right)^2 \| x - y\|^2.
\end{align*}
Then, by applying Proposition~\ref{prop:nabla_g_weakly_cvx_appendix} for $\gamma \rho \ge \frac{1}{2}$, we have
\begin{align*}
    \|\nabla g^\gamma(x)  - \nabla g^\gamma(x)\| \le \frac{ \rho}{1 -\gamma \rho} \|x - y\|.
\end{align*}
\end{proof}

\begin{proposition}\label{prop:same_argmin}
For $g$ a $\rho$-weakly convex function with $\gamma \rho < 1$, the following properties are equivalent
\begin{enumerate}
  \renewcommand{\labelenumi}{\roman{enumi})}
  \item $x \in \argmin g$
  \item $x \in \argmin g^\gamma$
\end{enumerate}
\end{proposition}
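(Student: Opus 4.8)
The plan is to exploit two ingredients that are already available. First, the equality of infimal values $\inf_{\R^d} g = \inf_{\R^d} g^\gamma$, observed just after the definition of the Moreau envelope. Second, the pointwise identity~\eqref{eq:moreau_prox}, namely $g^\gamma(x) = g(\Prox{\gamma g}{x}) + \frac{1}{2\gamma}\|x - \Prox{\gamma g}{x}\|^2$, together with the fact that for $\gamma\rho<1$ the infimum defining $g^\gamma$ is attained at the unique point $\Prox{\gamma g}{x}$. I would prove the two implications separately.

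For (i) $\Rightarrow$ (ii), suppose $x \in \argmin g$. Taking $y = x$ in the infimum~\eqref{eq:moreau_env} gives $g^\gamma(x) \le g(x)$. Conversely, $g^\gamma(x) \ge \inf_{\R^d} g^\gamma = \inf_{\R^d} g = g(x)$, using the equality of infima and $x \in \argmin g$. Hence $g^\gamma(x) = \inf_{\R^d} g^\gamma$, i.e. $x \in \argmin g^\gamma$.

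For (ii) $\Rightarrow$ (i), suppose $x \in \argmin g^\gamma$ and set $p = \Prox{\gamma g}{x}$. Then~\eqref{eq:moreau_prox} and the equality of infima give the chain
\begin{align*}
    \inf_{\R^d} g = \inf_{\R^d} g^\gamma = g^\gamma(x) = g(p) + \frac{1}{2\gamma}\|x - p\|^2 \ge g(p) \ge \inf_{\R^d} g.
\end{align*}
All inequalities are therefore equalities, which forces $\|x - p\|^2 = 0$, so $x = p$, and $g(p) = \inf_{\R^d} g$. Combining these yields $g(x) = \inf_{\R^d} g$, that is $x \in \argmin g$. Alternatively, since $g^\gamma$ is differentiable (Proposition~\ref{prop:nabla_g_weakly_cvx_appendix}), a global minimizer satisfies $\nabla g^\gamma(x) = \frac{1}{\gamma}(x - p) = 0$, again giving $x = p$ and then $g(x) = g^\gamma(x) = \inf_{\R^d} g$.

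The argument is essentially bookkeeping around the identity~\eqref{eq:moreau_prox}, and I do not expect a serious obstacle. The only substantive point is the attainment of the infimum at the unique proximal point, which is what allows the chain of equalities in the second implication to pull a minimizer of the envelope back to a minimizer of $g$; this attainment relies precisely on the standing hypothesis $\gamma\rho<1$, which makes $y \mapsto \frac{1}{2\gamma}\|x-y\|^2 + g(y)$ strongly convex. With attainment in hand, the crux is simply reading off the forced equalities in the displayed chain.
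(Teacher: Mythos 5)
Your proof is correct, and the converse direction takes a genuinely different route from the paper. For (i) $\Rightarrow$ (ii) both arguments amount to the same bookkeeping: the paper first shows $g^\gamma(x)=g(x)$ by a change of variables in the infimum, then bounds $g^\gamma(z)\ge g(x)$ for all $z$, whereas you invoke the equality $\inf g=\inf g^\gamma$ stated after the definition of the envelope together with $g^\gamma(x)\le g(x)$; these are interchangeable. The real divergence is in (ii) $\Rightarrow$ (i): the paper goes through differentiability, using Proposition~\ref{prop:nabla_g_weakly_cvx_appendix} to get $0=\nabla g^\gamma(x)=\frac{1}{\gamma}(x-\Prox{\gamma g}{x})$ and hence $\Prox{\gamma g}{x}=x$, while you extract $x=\Prox{\gamma g}{x}$ and $g(x)=\inf g$ purely from the forced equalities in the chain $\inf g=\inf g^\gamma=g^\gamma(x)=g(p)+\frac{1}{2\gamma}\|x-p\|^2\ge g(p)\ge\inf g$. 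Your route is more elementary and self-contained: it needs only the attainment and uniqueness of the proximal point (guaranteed by $\gamma\rho<1$, as you correctly flag) and the identity~\eqref{eq:moreau_prox}, with no appeal to the differentiability of $g^\gamma$ --- your ``alternative'' remark is precisely the paper's proof. What the paper's choice buys in exchange is consistency with its broader narrative: routing the argument through $\nabla g^\gamma(x)=0$ directly connects minimizers to the critical-point characterizations of Lemma~\ref{lemma:caracterization_critical_point_moreau_env} and reuses machinery already established. A minor side benefit of your version: it sidesteps the slightly garbled display in the paper's forward direction, where the intermediate bound $g^\gamma(z)\ge g(z)$ is written (false in general, since the envelope lies below $g$); the intended step is $g^\gamma(z)\ge\inf_y g(z-y)\ge g(x)$, which your infimum-equality argument renders moot.
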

Proposition~\ref{prop:same_argmin} is proved in~\cite[Corollary 3.4]{hoheiselproximal} using convex optimization results. Here, we provide an elementary proof.
\begin{proof}
For the first implication, if $x \in \argmin g$, we have that
\begin{align*}
    g^{\gamma}(x) &= \inf_{y \in \R^d} \frac{1}{2\gamma}\|x - y\|^2 + g(y) \\
    &= \inf_{y \in \R^d} \frac{1}{2\gamma}\| y\|^2 + g(x - y)\\
    &\ge \inf_{y \in \R^d} \frac{1}{2\gamma}\| y\|^2 + g(x) = g(x).
\end{align*}
So we get that $g^\gamma(x) = g(x)$. Then for $z \in \R^d$, we have
\begin{align*}
    g^\gamma(z) = \inf_{y \in \R^d} g(z-y) +\frac{1}{2\gamma}\|y\|^2 \ge g(z) \ge g(x) = g^\gamma(x).
\end{align*}
So we get that $x \in \argmin g^\gamma$ and (i) implies (ii).

Conversely, suppose that $x \in \argmin g^\gamma$. From Proposition~\ref{prop:nabla_g_weakly_cvx_appendix}, we have
\begin{align*}
    0 = \nabla g^\gamma(x) = \frac{1}{\gamma} \left(x - \Prox{\gamma g}{x}\right).
\end{align*}
So we obtain that $\Prox{\gamma g}{x} = x$. By definition of the Moreau envelope, 
\begin{align*}
    g^\gamma(x) = g(\Prox{\gamma g}{x}) + \frac{1}{2\gamma}\|x - \Prox{\gamma g}{x}\|^2 = g(x).
\end{align*}

Then, for $y \in \R^d$, by  optimality of $x$ we obtain
\begin{align*}
    g(x) = g^\gamma(x) \le g^\gamma(y) \le g(y),
\end{align*}
so $x \in \argmin g$, which proves that (ii) implies (i).
\end{proof}

\begin{lemma}\label{lemma:caracterization_critical_point_moreau_env}
If $g$ is $\rho$-weakly convex with $\gamma \rho < 1$, the following properties are equivalent, for $x \in \R^d$,
\begin{enumerate}
  \renewcommand{\labelenumi}{\roman{enumi})}
  \item $\nabla g^\gamma(x) = 0$
  \item $\Prox{\gamma g}{x} = x$
  \item $g^\gamma(x) = g(x)$
  \item $g(\Prox{\gamma g}{x}) = g(x)$
\end{enumerate}
\end{lemma}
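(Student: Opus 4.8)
The plan is to exploit the two structural facts already established: the gradient formula $\nabla g^\gamma(x) = \frac{1}{\gamma}(x - \Prox{\gamma g}{x})$ from Proposition~\ref{prop:nabla_g_weakly_cvx_appendix}, and the characterization $g^\gamma(x) = g(\Prox{\gamma g}{x}) + \frac{1}{2\gamma}\|x - \Prox{\gamma g}{x}\|^2$ from equation~\eqref{eq:moreau_prox}. Rather than proving a cycle, I would take (ii) as the central hub and show (i) $\iff$ (ii), then (ii) $\iff$ (iii) and (ii) $\iff$ (iv).

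First I would dispatch (i) $\iff$ (ii). Since $\gamma > 0$, the gradient formula gives $\nabla g^\gamma(x) = 0$ if and only if $x - \Prox{\gamma g}{x} = 0$, which is exactly $\Prox{\gamma g}{x} = x$. This is immediate. Next, the implications (ii) $\implies$ (iii) and (ii) $\implies$ (iv) both follow by substituting $\Prox{\gamma g}{x} = x$ into~\eqref{eq:moreau_prox}: the quadratic term vanishes, leaving $g^\gamma(x) = g(\Prox{\gamma g}{x}) = g(x)$, so both (iii) and (iv) hold simultaneously.

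For the reverse directions, I would use the basic bound $g^\gamma(x) \le g(x)$, obtained by evaluating the infimum in~\eqref{eq:moreau_env} at the feasible point $y = x$. To prove (iv) $\implies$ (ii), write $p = \Prox{\gamma g}{x}$; the hypothesis $g(p) = g(x)$ combined with~\eqref{eq:moreau_prox} yields $g^\gamma(x) = g(x) + \frac{1}{2\gamma}\|x - p\|^2$, and comparing with $g^\gamma(x) \le g(x)$ forces $\|x - p\|^2 \le 0$, hence $p = x$. To prove (iii) $\implies$ (ii), I would argue that $g^\gamma(x) = g(x)$ means the value $g(x)$, which is attained by the objective $y \mapsto g(y) + \frac{1}{2\gamma}\|x - y\|^2$ at $y = x$, equals the infimum; since $\gamma \rho < 1$ makes this objective $(\frac{1}{\gamma} - \rho)$-strongly convex and therefore admits a \emph{unique} minimizer (namely $\Prox{\gamma g}{x}$), the point $y = x$ must be that minimizer, giving $\Prox{\gamma g}{x} = x$.

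The arguments here are all short consequences of earlier results, so I do not expect a genuine obstacle; the only point requiring care is the (iii) $\implies$ (ii) step, where the conclusion hinges essentially on the strong convexity of the proximal objective under the standing assumption $\gamma \rho < 1$, which guarantees uniqueness of the minimizer. Without that uniqueness one could only conclude that $x$ is \emph{a} minimizer, not that it coincides with $\Prox{\gamma g}{x}$, so I would make sure to invoke the strong-convexity justification explicitly at that point.
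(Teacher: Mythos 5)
Your proposal is correct, and it differs from the paper in one substantive place. The equivalence (i) $\iff$ (ii) via Proposition~\ref{prop:nabla_g_weakly_cvx_appendix}, the implication (ii) $\implies$ (iii)/(iv) by substitution into~\eqref{eq:moreau_prox}, and the step (iv) $\implies$ (ii) (forcing $\|x-p\|^2\le 0$ from $g^\gamma(x)\le g(x)$) all coincide with the paper's arguments. Where you diverge is the handling of (iii): the paper proves a cycle (ii) $\implies$ (iii) $\implies$ (iv) $\implies$ (ii), and its step (iii) $\implies$ (iv) goes through the dependence on $\gamma$ — from $g^\gamma(x)=g(x)$ and the monotonicity of $\lambda\mapsto g^\lambda(x)$ (Proposition~\ref{prop:moreau_envelop_dependence_in_gamma}) it deduces $g^\lambda(x)=g(x)$ for all $\lambda\in(0,\gamma]$, hence $\frac{\partial g^\gamma}{\partial\gamma}(x)=0$, and then the derivative formula of Proposition~\ref{prop:derivative_in_gamma} yields $\Prox{\gamma g}{x}=x$. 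You instead prove (iii) $\implies$ (ii) directly: $g^\gamma(x)=g(x)$ means $y=x$ attains the infimum defining $g^\gamma(x)$, and since $\gamma\rho<1$ makes $y\mapsto g(y)+\frac{1}{2\gamma}\|x-y\|^2$ $(\frac{1}{\gamma}-\rho)$-strongly convex with unique minimizer $\Prox{\gamma g}{x}$, you conclude $x=\Prox{\gamma g}{x}$. Your route is more elementary and self-contained — it uses only the strong convexity fact already invoked to define the proximal point, avoiding Propositions~\ref{prop:moreau_envelop_dependence_in_gamma} and~\ref{prop:derivative_in_gamma} entirely — whereas the paper's route illustrates how the $\gamma$-monotonicity and $\gamma$-differentiability results can be put to work, consistent with its expository goals. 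You are also right to flag that uniqueness of the minimizer is the load-bearing point in your (iii) $\implies$ (ii) step; without it one would only get that $x$ is \emph{a} minimizer.
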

Lemma~\ref{lemma:caracterization_critical_point_moreau_env} gives simple characterizations of the critical points of $g^\gamma$. In the case of convex $g$, the criterion given in Lemma~\ref{lemma:caracterization_critical_point_moreau_env} define the global minimum of $g$ and $g^\gamma$. This is due to the fact that a critical point of a convex function is a global minimum.
\begin{proof}
By Proposition~\ref{prop:nabla_g_weakly_cvx_appendix},  (i) is equivalent to (ii).
If $\Prox{\gamma g}{x} = x$, then
\begin{align*}
    g^\gamma(x) = g(\Prox{\gamma g}{x}) + \frac{1}{2\gamma}\|x - \Prox{\gamma g}{x}\|^2 = g(x),
\end{align*}
which proves that (ii) implies (iii).

If $g^\gamma(x) = g(x)$, then by Proposition~\ref{prop:moreau_envelop_dependence_in_gamma}, $\forall \lambda \in (0, \gamma]$, $g^\lambda(x) = g(x)$. So we get that $\frac{\partial g^\gamma}{\partial \gamma}(x) = 0$ Then by Proposition~\ref{prop:derivative_in_gamma}, we get that $\Prox{\gamma g}{x} = x$, then $g(\Prox{\gamma g}{x}) = g(x)$, which proves that (iii) implies (iv).

If $g(\Prox{\gamma g}{x}) = g(x)$, by definition of the proximal operator, we have
\begin{align*}
    g(\Prox{\gamma g}{x}) + \frac{1}{2\gamma}\|x - \Prox{\gamma g}{x}\|^2 &\le g(x) \\
    g(x) + \frac{1}{2\gamma}\|x - \Prox{\gamma g}{x}\|^2 &\le g(x) \\
    \Prox{\gamma g}{x} &= x.
\end{align*}
This proves that (iv) implies (ii) and finishes the proof.
\end{proof}

\begin{lemma}\label{lemma:same_critical_points}
For $g$ a $\rho$-weakly convex function with $\gamma \rho < 1$, if $g$ is differentiable at $x \in \R^d$, then  $\nabla g(x) = 0$ if and only if $\nabla g^\gamma(x) = 0$.
\end{lemma}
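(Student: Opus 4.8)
The plan is to route both implications through the identity $\nabla g^\gamma(x) = \frac{1}{\gamma}\left(x - \Prox{\gamma g}{x}\right)$ from Proposition~\ref{prop:nabla_g_weakly_cvx_appendix}, which shows that $\nabla g^\gamma(x) = 0$ is equivalent to $\Prox{\gamma g}{x} = x$ (this is also recorded in Lemma~\ref{lemma:caracterization_critical_point_moreau_env}). With this equivalence in hand, the statement reduces to proving that $\nabla g(x) = 0$ if and only if $\Prox{\gamma g}{x} = x$.

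For the forward direction I would assume $\nabla g(x) = 0$ and invoke relation~\eqref{eq:inverse_prox} of Proposition~\ref{prop:notation_prox_operator}, which is stated precisely under differentiability of $g$ at the single point $x$: it gives $\Prox{\gamma g}{x + \gamma \nabla g(x)} = x$, and substituting $\nabla g(x) = 0$ yields $\Prox{\gamma g}{x} = x$. Applying Proposition~\ref{prop:nabla_g_weakly_cvx_appendix} then gives $\nabla g^\gamma(x) = \frac{1}{\gamma}(x - x) = 0$.

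For the converse I would start from $\nabla g^\gamma(x) = 0$, which by Proposition~\ref{prop:nabla_g_weakly_cvx_appendix} forces $\Prox{\gamma g}{x} = x$. By the definition of the proximal operator, $x$ is then the unique minimizer of the function $\phi(y) = \frac{1}{2\gamma}\|x - y\|^2 + g(y)$, which is strongly convex since $g$ is $\rho$-weakly convex with $\gamma \rho < 1$. Because $g$ is differentiable at $x$ and the quadratic part is smooth everywhere, $\phi$ is differentiable at its minimizer $x$, so Fermat's rule gives $\nabla \phi(x) = \frac{1}{\gamma}(x - x) + \nabla g(x) = \nabla g(x) = 0$.

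The one subtlety to treat carefully is that $g$ is differentiable only at the single point $x$, not globally; consequently the optimality argument in the converse must be applied exactly at the minimizer $p = \Prox{\gamma g}{x}$, and the hypothesis $\Prox{\gamma g}{x} = x$ is what places this minimizer at the point where differentiability is available. Since the quadratic term of $\phi$ is smooth on all of $\R^d$, no additional regularity of $g$ is required, and outside this point both implications follow directly from the proximal identities established earlier.
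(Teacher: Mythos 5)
Your proof is correct and follows essentially the same route as the paper: both directions go through the identity $\nabla g^\gamma(x) = \frac{1}{\gamma}\left(x - \Prox{\gamma g}{x}\right)$ of Proposition~\ref{prop:nabla_g_weakly_cvx_appendix}, with the forward implication using $\Prox{\gamma g}{x + \gamma \nabla g(x)} = x$ from Proposition~\ref{prop:notation_prox_operator} and the converse using the first-order optimality condition of the proximal problem at the minimizer, which the paper calls ``the optimal condition of the proximal operator'' and you phrase as Fermat's rule for $\phi(y) = \frac{1}{2\gamma}\|x-y\|^2 + g(y)$. Your explicit remark that differentiability of $g$ at the single point $x$ suffices because the equality $\Prox{\gamma g}{x} = x$ places the minimizer exactly where $g$ is differentiable is a welcome clarification of a point the paper leaves implicit.
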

The previous lemma shows that the critical points of $g$ and $g^\gamma$ are the same. Note that this lemma can be generalized tor non-differentiable functions $g$ using  Clarke's sub-differentials, see~\cite[Corollary 3.4]{hoheiselproximal}. We observe in example~\ref{ex:non_convex_function} that the critical points and the minimizers are preserved by the Moreau envelope.
\begin{proof}
If $\nabla g(x) = 0$, by Proposition~\ref{prop:notation_prox_operator}, we have $\Prox{\gamma g}{x} = x$. Then by Proposition~\ref{prop:nabla_g_weakly_cvx_appendix}, $\nabla g^\gamma(x) = 0$.

Next, if $\nabla g^\gamma(x) = 0$, by Proposition~\ref{prop:nabla_g_weakly_cvx_appendix}, we have $\Prox{\gamma g}{x} = x$. Then by the optimal condition of the proximal operator, we have
\begin{align*}
    \nabla g(\Prox{\gamma g}{x}) + \frac{1}{\gamma} \left( \Prox{\gamma g}{x} -x \right) = 0
\end{align*}
and we obtain $\nabla g(x) = 0$.
\end{proof}

\section{Second derivative of the Moreau envelope and convexity of the image of the proximal operator}\label{sec:second_derivation_mro_env_imgae_prox}

For weakly convex functions, we now prove that the Moreau envelope is twice differentiable if $g$ is twice differentiable and Lipschitz on the image of the proximal operator. These second-order properties have been studied in previous works~\cite{qi1994second, lucet1995formule, lemarechal1997practical} in the convex setting. 
We will also show that the boundary of the  convex envelope of $\Prox{\gamma g}{\R^d}$ is of measure zero (Proposition~\ref{prop:leb_measure_prox_image_estimation}). In other words, the image of the proximity operator of a weakly convex function is almost convex.


\begin{lemma}\label{lemma:nabla_2_g_weakly_cvx}
Let $g$ be a $\rho$-weakly convex function. Assume that  $g$ that is  $\mathcal{C}^2$  and $L_g$-smooth on $\Prox{\gamma g}{\R^d}$, \textit{i.e.} $-L_g I_d \preceq \nabla^2 g \preceq L_g I_d$, with $\rho \gamma < 1$ and $L_g \gamma < 1$. Then we get, for $x \in \R^d$
\begin{align*}
    \mathsf{J}_{\mathsf{Prox}_{\gamma g}}(x) &= \left( I_d + \gamma \nabla^2g(\Prox{\gamma g}{x}) \right)^{-1} \\
    \nabla^2 g^{\gamma}(x) &= \frac{1}{\gamma}\left( I_d - \left( I_d + \gamma \nabla^2g(\Prox{\gamma g}{x}) \right)^{-1} \right),
\end{align*}
where $\mathsf{J}_{\mathsf{Prox}_{\gamma g}}$ states for the Jacobian matrix of $\mathsf{Prox}_{\gamma g}$.
\end{lemma}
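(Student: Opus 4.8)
The plan is to obtain the Jacobian of the proximal operator from the inverse function theorem applied to the map $I_d + \gamma \nabla g$, and then to derive the Hessian of the Moreau envelope by simply differentiating the gradient formula of Proposition~\ref{prop:nabla_g_weakly_cvx_appendix}, namely $\nabla g^\gamma(x) = \frac{1}{\gamma}\left(x - \Prox{\gamma g}{x}\right)$.

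First I would work on the open set $U = \mathsf{Prox}_{\gamma g}(\R^d)$, which is open by Proposition~\ref{prop:notation_prox_operator}. On $U$ the function $g$ is $\mathcal{C}^2$, so $\Phi = I_d + \gamma \nabla g$ is $\mathcal{C}^1$ on $U$ with Jacobian $\mathsf{J}_\Phi = I_d + \gamma \nabla^2 g$. Weak convexity together with inequality~\eqref{eq:ineq_wk_cvx_2_diff} gives $\nabla^2 g \succeq -\rho I_d$, hence $\mathsf{J}_\Phi \succeq (1 - \gamma\rho) I_d \succ 0$ since $\gamma\rho < 1$; in particular $\mathsf{J}_\Phi$ is invertible at every point of $U$. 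Now fix $x \in \R^d$ and set $p = \Prox{\gamma g}{x} \in U$. The first-order optimality condition for the strongly convex problem defining the proximal point (differentiating $y \mapsto \frac{1}{2\gamma}\|x-y\|^2 + g(y)$ at $p$, using that $g$ is differentiable at $p \in U$) reads $\frac{1}{\gamma}(p - x) + \nabla g(p) = 0$, that is $\Phi(p) = x$. By the inverse function theorem, $\Phi$ is a local $\mathcal{C}^1$-diffeomorphism from a neighborhood of $p$ onto a neighborhood of $x$, with local inverse $\Psi$ satisfying $\mathsf{J}_\Psi(x) = (\mathsf{J}_\Phi(p))^{-1}$.

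To conclude the first identity I would identify $\mathsf{Prox}_{\gamma g}$ with $\Psi$ locally. Since $\mathsf{Prox}_{\gamma g}$ is continuous (Proposition~\ref{prop:derivative_in_gamma}) and satisfies $\Phi(\Prox{\gamma g}{y}) = y$ for all $y$ near $x$ (the same optimality condition, as $\Prox{\gamma g}{y} \in U$), the point $\Prox{\gamma g}{y}$ stays in the neighborhood of $p$ where $\Phi$ is a diffeomorphism, hence $\Prox{\gamma g}{y} = \Psi(y)$ there. Thus $\mathsf{Prox}_{\gamma g}$ is $\mathcal{C}^1$ near $x$ and
\begin{align*}
    \mathsf{J}_{\mathsf{Prox}_{\gamma g}}(x) = (\mathsf{J}_\Phi(p))^{-1} = \left(I_d + \gamma \nabla^2 g(\Prox{\gamma g}{x})\right)^{-1}.
\end{align*}
The second identity then follows by differentiating the gradient formula of Proposition~\ref{prop:nabla_g_weakly_cvx_appendix}: since $\mathsf{Prox}_{\gamma g}$ is $\mathcal{C}^1$ near $x$, taking Jacobians of $\nabla g^\gamma(x) = \frac{1}{\gamma}\left(x - \Prox{\gamma g}{x}\right)$ yields
\begin{align*}
    \nabla^2 g^\gamma(x) = \frac{1}{\gamma}\left(I_d - \mathsf{J}_{\mathsf{Prox}_{\gamma g}}(x)\right) = \frac{1}{\gamma}\left(I_d - \left(I_d + \gamma \nabla^2 g(\Prox{\gamma g}{x})\right)^{-1}\right).
\end{align*}

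The main obstacle is establishing the $\mathcal{C}^1$-regularity of $\mathsf{Prox}_{\gamma g}$ rigorously, given that $g$ is assumed $\mathcal{C}^2$ only on the image $U$ and not on all of $\R^d$. The resolution is purely local: openness of $U$ guarantees $g$ is $\mathcal{C}^2$ near $p$, the invertibility of $I_d + \gamma \nabla^2 g$ (from weak convexity and $\gamma\rho < 1$) enables the inverse function theorem, and the previously established continuity of the proximal operator is exactly what lets one identify it with the $\mathcal{C}^1$ local inverse $\Psi$ on a neighborhood of $x$. The upper bound $\nabla^2 g \preceq L_g I_d$ with $L_g \gamma < 1$ is not required for the invertibility above but keeps the matrices $I_d + \gamma \nabla^2 g$ well-conditioned and is used in the subsequent measure-theoretic statement (Proposition~\ref{prop:leb_measure_prox_image_estimation}).
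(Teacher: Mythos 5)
Your proof is correct and follows the same skeleton as the paper's: both obtain $\mathsf{J}_{\mathsf{Prox}_{\gamma g}}$ by differentiating the inverse relation $\mathsf{Prox}_{\gamma g}^{-1} = I_d + \gamma \nabla g$ from Proposition~\ref{prop:notation_prox_operator}, and both obtain $\nabla^2 g^\gamma$ by differentiating the gradient formula of Proposition~\ref{prop:nabla_g_weakly_cvx_appendix}. Where you genuinely add value is at the one step the paper glosses over: the paper simply asserts that ``the previous equation proves that $\mathsf{Prox}_{\gamma g}$ is differentiable if $g$ is twice differentiable,'' whereas you justify this via the inverse function theorem applied to $\Phi = I_d + \gamma\nabla g$ on the open set $\Prox{\gamma g}{\R^d}$, using the lower bound $\mathsf{J}_\Phi \succeq (1-\gamma\rho)I_d$ from weak convexity and the continuity of the proximal operator (Proposition~\ref{prop:derivative_in_gamma}) to identify $\mathsf{Prox}_{\gamma g}$ with the local $\mathcal{C}^1$ inverse $\Psi$; this local identification is exactly the missing rigor. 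A cosmetic difference: the paper differentiates the identity $\nabla g^\gamma = \nabla g \circ \mathsf{Prox}_{\gamma g}$ and then applies the algebraic identity $A(I_d+\gamma A)^{-1} = \frac{1}{\gamma}\left(I_d - (I_d+\gamma A)^{-1}\right)$, while you differentiate $\nabla g^\gamma(x) = \frac{1}{\gamma}\left(x - \Prox{\gamma g}{x}\right)$ and land on the final formula directly --- equivalent, and arguably cleaner. One small correction to your closing remark: you are right that $L_g\gamma < 1$ is not needed for invertibility (weak convexity with $\gamma\rho<1$ suffices), but it is used in Lemma~\ref{lemma:moreau_envelop_smooth} (where the bound $\frac{L_g}{1-L_g\gamma}$ requires $L_g\gamma<1$), not in Proposition~\ref{prop:leb_measure_prox_image_estimation}, which involves no smoothness assumption at all.
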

By Proposition~\ref{prop:notation_prox_operator}, $\Prox{\gamma g}{\R^d}$ is an open set, so the Lipschitness and $\mathcal{C}^2$ properties of $g$ are well defined on this open set.  Notice that 
Lemma~\ref{lemma:nabla_2_g_weakly_cvx} has been shown for convex functions  in~\cite[Theorem 3.4]{ovcharova2010second}.
\begin{proof}
If $g$ is twice differentiable, by Proposition~\ref{prop:notation_prox_operator}, we get that
\begin{align*}
    \Prox{\gamma g}{x} = \left(I_d + \gamma \nabla g \right)^{-1}(x).
\end{align*}
The previous equation proves that $\mathsf{Prox}_{\gamma g}$ is differentiable if $g$ is twice differentiable. By differentiating  $\Prox{\gamma g}{x} + \gamma \nabla g(\Prox{\gamma g}{x})$, we obtain
\begin{align*}
    \mathsf{J}_{\mathsf{Prox}_{\gamma g}} = \left(  I_d + \gamma \nabla^2 g \circ \mathsf{Prox}_{\gamma g} \right)^{-1}.
\end{align*}
Next, by the optimal condition of the proximal operator, we get 
\begin{align*}
    \frac{1}{\gamma} \left( \Prox{\gamma g}{x} - x \right) + \nabla g(\Prox{\gamma g}{x}) = 0.
\end{align*}
Combined with Proposition~\ref{prop:nabla_g_weakly_cvx_appendix}, we obtain
\begin{align}\label{eq:optimal_condition_on_gradient}
    \nabla g^{\gamma}(x) = \nabla g(\Prox{\gamma g}{x}).
\end{align}
Then, we differentiate Equation~\eqref{eq:optimal_condition_on_gradient} to get
\begin{align*}
    \nabla^2 g^{\gamma}(x) &= \nabla^2 g(\Prox{\gamma g}{x}) \mathsf{J}_{\mathsf{Prox}_{\gamma g}}(x) \\
    &=\nabla^2 g(\Prox{\gamma g}{x}) \left(  I_d + \gamma \nabla^2 g(\Prox{\gamma g}{x}) \right)^{-1} \\
    &= \frac{1}{\gamma}\left( I_d - \left( I_d + \gamma \nabla^2g(\Prox{\gamma g}{x}) \right)^{-1} \right),
\end{align*}
which shows the second part of Lemma~\ref{lemma:nabla_2_g_weakly_cvx}.
\end{proof}

We can now deduce that for $\gamma$ sufficiently small, the Moreau envelope is smooth (with a constant independent of $\gamma$) on all $\R^d$ if the function $g$ is smooth on $\Prox{\gamma g}{\R^d}$.

\begin{lemma}\label{lemma:moreau_envelop_smooth}
Let $g$ be a $\rho$-weakly convex function. Assume that $g$ is $\mathcal{C}^2$ and $L_g$-smooth on $\Prox{\gamma g}{\R^d}$ with $L_g \gamma \le \frac{1}{2}$ and $\rho \gamma < 1$. Then $g^{\gamma}$ is $2L_g$-smooth on~$\R^d$.
\end{lemma}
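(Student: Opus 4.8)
The plan is to leverage the closed-form expression for $\nabla^2 g^\gamma$ obtained in Lemma~\ref{lemma:nabla_2_g_weakly_cvx}, and simply bound its operator norm by $2L_g$ uniformly in $x$. Since $g$ is assumed $\mathcal{C}^2$ on $\Prox{\gamma g}{\R^d}$, and the proximal point $\Prox{\gamma g}{x}$ always lies in this image set, the matrix $H := \nabla^2 g(\Prox{\gamma g}{x})$ is well defined, symmetric, and satisfies $-L_g I_d \preceq H \preceq L_g I_d$. By Lemma~\ref{lemma:nabla_2_g_weakly_cvx}, we have
\begin{align*}
    \nabla^2 g^{\gamma}(x) = \frac{1}{\gamma}\left( I_d - \left( I_d + \gamma H \right)^{-1} \right) = H \left( I_d + \gamma H \right)^{-1},
\end{align*}
so the task reduces to a purely spectral estimate on this symmetric matrix.

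First I would diagonalize. Since $H$ is symmetric it has an orthonormal eigenbasis with eigenvalues $\lambda \in [-L_g, L_g]$, and $\nabla^2 g^\gamma(x)$ is diagonal in the same basis with eigenvalues $\varphi(\lambda) = \frac{\lambda}{1 + \gamma \lambda}$. Note that $1 + \gamma\lambda > 0$ for every such $\lambda$, because $\gamma L_g \le \tfrac12 < 1$ guarantees $\gamma|\lambda| \le \gamma L_g < 1$; this is exactly where the hypothesis $L_g\gamma \le \tfrac12$ (in fact $L_g\gamma<1$ already suffices for well-definedness) is used, and it ensures the inverse exists and the function $\varphi$ is continuous on $[-L_g, L_g]$. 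The operator norm $\|\nabla^2 g^\gamma(x)\|$ is then $\max_{\lambda \in [-L_g, L_g]} |\varphi(\lambda)|$.

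Next I would bound $|\varphi(\lambda)|$. The function $\lambda \mapsto \frac{\lambda}{1+\gamma\lambda}$ is increasing on its domain (its derivative is $(1+\gamma\lambda)^{-2} > 0$), so its extremes over $[-L_g, L_g]$ are attained at the endpoints: the maximum value is $\frac{L_g}{1 + \gamma L_g} \le L_g$ and the minimum value is $\frac{-L_g}{1 - \gamma L_g}$. Hence the largest magnitude is $\frac{L_g}{1 - \gamma L_g}$, coming from the negative endpoint. Using $\gamma L_g \le \tfrac12$, which gives $1 - \gamma L_g \ge \tfrac12$, I get
\begin{align*}
    \frac{L_g}{1 - \gamma L_g} \le \frac{L_g}{1/2} = 2 L_g.
\end{align*}
Therefore $\|\nabla^2 g^\gamma(x)\| \le 2L_g$ for all $x \in \R^d$, which is precisely the statement that $g^\gamma$ is $2L_g$-smooth on $\R^d$.

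The computation is routine; the only genuine subtlety to flag is the justification that $\nabla^2 g^\gamma$ even exists globally. Lemma~\ref{lemma:nabla_2_g_weakly_cvx} requires both $\rho\gamma<1$ and $L_g\gamma<1$ (the latter implied here by $L_g\gamma\le\tfrac12$) for the Jacobian and Hessian formulas to hold, and it relies on $g$ being $\mathcal{C}^2$ on the open set $\Prox{\gamma g}{\R^d}$ (open by Proposition~\ref{prop:notation_prox_operator}); I would state at the outset that these hypotheses let me invoke that lemma at every $x$. The main point worth care — rather than a true obstacle — is simply recognizing that the worst eigenvalue is the \emph{negative} one $-L_g$, so the relevant denominator is $1-\gamma L_g$ and not $1+\gamma L_g$; this is why the bound is $2L_g$ and why the hypothesis is phrased as $L_g\gamma\le\tfrac12$ rather than merely $L_g\gamma<1$.
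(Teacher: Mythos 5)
Your proof is correct and follows essentially the same route as the paper: both invoke the Hessian formula of Lemma~\ref{lemma:nabla_2_g_weakly_cvx} and reduce the claim to a spectral bound on $\frac{1}{\gamma}\bigl( I_d - ( I_d + \gamma \nabla^2 g(\Prox{\gamma g}{x}) )^{-1} \bigr)$, with the worst case being the negative eigenvalue endpoint $-L_g/(1-\gamma L_g) \ge -2L_g$ under $\gamma L_g \le \tfrac12$. Your direct diagonalization of $\varphi(\lambda)=\lambda/(1+\gamma\lambda)$ is a slightly cleaner packaging of the paper's monotonicity argument on the bounding functions $u(\gamma)$ and $v(\gamma)$, but the content is identical.
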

\begin{proof}
By Lemma~\ref{lemma:nabla_2_g_weakly_cvx}, we have
\begin{align*}
    \nabla^2 g^{\gamma}(x) &= \frac{1}{\gamma}\left( I_d - \left( I_d + \gamma \nabla^2g(\Prox{\gamma g}{x}) \right)^{-1} \right).
\end{align*}

As we assume that $-L_g I_d \preceq \nabla^2g(\Prox{\gamma g}{x}) \preceq L_g I_d$, we get
\begin{align*}
    \frac{1}{\gamma}\left( 1 - \frac{1}{ 1 - L_g \gamma} \right) I_d &\preceq \nabla^2 g^{\gamma}(x) \preceq \frac{1}{\gamma}\left( 1 - \frac{1}{ 1 + L_g \gamma} \right) I_d\\
    \underbrace{-\frac{L_g}{ 1 - L_g \gamma}}_{:=u(\gamma)}I_d &\preceq \nabla^2 g^{\gamma}(x) \preceq\underbrace{\frac{L_g}{ 1 + L_g \gamma}}_{:=v(\gamma)}I_d 
\end{align*}

Since $u'(\gamma)=-\frac{L_g^2}{(1 - L_g \gamma)^2} \le 0$, $u$ is non-increasing. As  $u(\frac{1}{2L_g}) = -2L_g$, we obtain that $u(\gamma) \ge -2L_g$,  for all $\gamma \in [0, \frac{1}{2L_g}]$.
In the same way, $v'(\gamma)=-\frac{L_g^2}{(1 + L_g \gamma)^2} \le 0$, so
 $v$ is .  Since $v(0) = L_g$, for $\gamma \ge 0$, we have $v(\gamma) \le L_g$. Finally, for $\gamma \in [0, \frac{1}{2L_g}]$, we get
\begin{align*}
    -2L_g I_d \preceq \nabla^2 g^{\gamma}(x) \preceq L_g I_d.
\end{align*}
So $g^{\gamma}$ is $2L_g$-smooth on $\R^d$.
\end{proof}

\begin{lemma}\label{lemma:high_order_diff}
Let $g$ be a $\rho$-weakly convex function and $\mathcal{C}^k$ in $x \in \R^d$ with $k \in \N^\star$. Then for $\rho \gamma < 1$, $g^\gamma$ is $\mathcal{C}^k$ in $x \in \R^d$.
\end{lemma}
Lemma~\ref{lemma:high_order_diff} is a generalization of~\cite[Theorem 3.12]{planiden2019proximal} in the weakly convex setting in.
\begin{proof}
By Proposition~\ref{prop:notation_prox_operator} and Proposition~\ref{prop:nabla_g_weakly_cvx_appendix}, we have that
\begin{align*}
    \nabla g^\gamma = \frac{1}{\gamma}\left(I_d - \left(I_d + \gamma \nabla g \right)^{-1} \right).
\end{align*}
In the previous equation, the right-term is $\mathcal{C}^{k-1}$. Thus we obtain that $g^\gamma$ is $\mathcal{C}^k$.
\end{proof}

We end this document  by showing that the image of the proximity operator of a weakly convex function is almost convex.
\begin{proposition}\label{prop:leb_measure_prox_image_estimation}
Let $g$ be a $\rho$-weakly convex function with $\rho \gamma < 1$. We have 
\begin{align}\label{eq:leb_measure_dom_image_nulle}
    \text{Leb}(\text{dom}(g) \setminus \Prox{\gamma g}{\R^d}) = 0,
\end{align}
with $\text{Leb}$ the Lebesgue measure and $\text{dom}(g) = \{x \in \R^d\, |\, g(x) < +\infty\}$. Moreover, we have that
\begin{align}\label{eq:leb_measure_conv_image_nulle}
    \text{Leb}(\text{Conv}(\Prox{\gamma g}{\R^d}) \setminus \Prox{\gamma g}{\R^d}) = 0,
\end{align}
where $\text{Conv}(\Prox{\gamma g}{\R^d})$ is the convex envelope of $\Prox{\gamma g}{\R^d}$.
\end{proposition}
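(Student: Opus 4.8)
The plan is to push both claims back to the convex case and then use the elementary fact that a convex set has a Lebesgue-negligible boundary. First I would set $g_\rho = g + \frac{\rho}{2}\|\cdot\|^2$, which is convex, and $\mu = \frac{\gamma}{1-\gamma\rho} > 0$. By Lemma~\ref{lemma:technical_result_weakly_convex_moreau_envelop}, $\Prox{\gamma g}{x} = \Prox{\mu g_\rho}{x/(1-\gamma\rho)}$; since $x \mapsto x/(1-\gamma\rho)$ is a bijection of $\R^d$, this gives $\Prox{\gamma g}{\R^d} = \Prox{\mu g_\rho}{\R^d}$. Moreover $\text{dom}(g) = \text{dom}(g_\rho)$ because $\frac{\rho}{2}\|\cdot\|^2$ is finite everywhere, and this common domain is convex. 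Hence it suffices to prove the two measure identities with $g$ replaced by the convex function $g_\rho$ and $\gamma$ by $\mu$.

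Second, I would establish the sandwich $\text{int}(\text{dom}(g_\rho)) \subseteq \Prox{\mu g_\rho}{\R^d} \subseteq \text{dom}(g_\rho)$. The right inclusion is immediate: a proximal point $p = \Prox{\mu g_\rho}{x}$ makes $\frac{1}{2\mu}\|x-p\|^2 + g_\rho(p)$ finite, so $g_\rho(p) < +\infty$. For the left inclusion, at any $p \in \text{int}(\text{dom}(g_\rho))$ the convex function admits a subgradient $v$, i.e. $g_\rho(y) \ge g_\rho(p) + \langle v, y - p\rangle$ for all $y$; taking $x = p + \mu v$ and completing the square yields $\frac{1}{2\mu}\|x-y\|^2 + g_\rho(y) \ge \frac{1}{2\mu}\|x-p\|^2 + g_\rho(p) + \frac{1}{2\mu}\|y-p\|^2$, so $p$ is the unique minimizer and $p = \Prox{\mu g_\rho}{x}$.

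Third, I would invoke (or prove in two lines) that the topological boundary of a convex subset of $\R^d$ is Lebesgue-negligible: if $\text{int}(\text{dom}(g_\rho)) = \emptyset$ then $\text{dom}(g_\rho)$ lies in a proper affine subspace and is itself negligible, making everything trivial; otherwise, shrinking $\overline{\text{dom}(g_\rho)}$ toward an interior point shows $\text{Leb}(\text{dom}(g_\rho) \setminus \text{int}(\text{dom}(g_\rho))) = 0$. Equation~\eqref{eq:leb_measure_dom_image_nulle} then follows from the sandwich, since $\text{dom}(g)\setminus\Prox{\gamma g}{\R^d} \subseteq \text{dom}(g_\rho) \setminus \text{int}(\text{dom}(g_\rho))$. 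For equation~\eqref{eq:leb_measure_conv_image_nulle}, taking convex hulls in the sandwich and using that $\text{dom}(g_\rho)$ is convex gives $\text{int}(\text{dom}(g_\rho)) \subseteq \text{Conv}(\Prox{\gamma g}{\R^d}) \subseteq \text{dom}(g_\rho)$, whence $\text{Conv}(\Prox{\gamma g}{\R^d}) \setminus \Prox{\gamma g}{\R^d} \subseteq \text{dom}(g_\rho) \setminus \text{int}(\text{dom}(g_\rho))$ is negligible as well.

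I expect the main obstacle to be the left inclusion $\text{int}(\text{dom}(g_\rho)) \subseteq \Prox{\mu g_\rho}{\R^d}$, which rests on the existence of a subgradient at every interior point of the domain. To stay in line with the authors' choice of avoiding subdifferential calculus, I would derive this $v$ directly from a supporting hyperplane to the convex epigraph of $g_\rho$ at the boundary point $(p, g_\rho(p))$, noting that $p \in \text{int}(\text{dom}(g_\rho))$ prevents the supporting functional from being vertical and hence produces a genuine affine minorant. The measure-zero-boundary fact is standard but should be stated carefully, including the degenerate empty-interior case.
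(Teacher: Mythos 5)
Your proof is correct, but it takes a genuinely different route from the paper's. The paper works with a full-measure subset of the interior: it cites Lang's theorem for $\text{Leb}(\text{dom}(g) \setminus \text{int}(\text{dom}(g))) = 0$, invokes the almost-everywhere differentiability of the convex function $g + \frac{\rho}{2}\|\cdot\|^2$ on $\text{int}(\text{dom}(g))$, and then uses Proposition~\ref{prop:notation_prox_operator} ($x = \Prox{\gamma g}{x + \gamma \nabla g(x)}$) to place every differentiability point in $\Prox{\gamma g}{\R^d}$. You instead prove the \emph{exact} pointwise inclusion $\text{int}(\text{dom}(g)) \subseteq \Prox{\gamma g}{\R^d} \subseteq \text{dom}(g)$: after reducing to the convex $g_\rho$ via Lemma~\ref{lemma:technical_result_weakly_convex_moreau_envelop} (valid, since $x \mapsto x/(1-\gamma\rho)$ is a bijection, so the two proximal images coincide), you produce for each interior point $p$ a subgradient $v$ and check by completing the square that $p = \Prox{\mu g_\rho}{p + \mu v}$; your algebra here is right, and the strict quadratic lower bound does give uniqueness of the minimizer. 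This buys a stronger and cleaner statement --- the image of the proximal operator sandwiches the interior of the domain, so the only null set in play is the topological boundary of a convex set, which you reprove in a self-contained way (including the degenerate empty-interior case the paper does not discuss separately) rather than citing Lang. The trade-off is that you need subgradient existence at interior points via a supporting hyperplane to the epigraph --- the (convex, not Clarke) subdifferential machinery that the paper deliberately sidesteps, which is precisely why the paper routes through almost-everywhere differentiability instead; your non-verticality argument for the supporting functional is the standard and correct way to close that step. One cosmetic remark: the reduction to $g_\rho$ is not strictly necessary, since your supporting-hyperplane argument could be run directly on $g_\rho$ with the proximal identity of Lemma~\ref{lemma:technical_result_weakly_convex_moreau_envelop} absorbed into it, but as written it is a clean use of the paper's own lemma.
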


Proposition~\ref{prop:leb_measure_prox_image_estimation} has been proved recently in~\cite[Lemma 1(iv)]{renaud2025stabilitylangevindiffusionconvergence}.

\begin{proof}
We denote by $\text{dom}(g)$ the set where $g < + \infty$. By the weak convexity of $g$, we know that $\text{dom}(g)$ is convex. In fact, for $x, y \in \text{dom}(g)$ and $\lambda \in [0,1]$, $g + \frac{\rho}{2}\|\cdot\|^2$ is convex, so we have $g(\lambda x + (1-\lambda)y) \le \lambda g(x) + \frac{\rho\lambda}{2}\|x\|^2 + (1-\lambda)g(y) +\frac{\rho(1-\lambda)}{2}\|x\|^2 - \frac{\rho}{2}\|\lambda x + (1-\lambda)y\|^2 < +\infty$, and $\lambda x + (1-\lambda)y \in \text{dom}(g)$. By~\cite[Theorem 1]{lang1986note}, because $\text{dom}(g)$ is convex, we have that $\text{Leb}(\text{dom}(g) \setminus \text{int}(\text{dom}(g))) = 0$.

By~\cite[Theorem 4.2.3]{hiriart1996convex}, a convex function is differentiable almost everywhere on the interior of its domain. As  $g +\frac{\rho}{2}\|\cdot\|^2$ is convex, it is thus  differentiable almost everywhere on the interior of its domain. As a consequence  $g$ is differentiable almost everywhere on the interior of its domain, $\text{int}(\text{dom}(g))$. Denoting as $\Gamma \subset \text{int}(\text{dom}(g))$ the subset on which $g$ is differentiable, we thus have $\text{Leb}(\text{int}(\text{dom}(g)) \setminus \Gamma) = 0$.

Moreover, for $x \in \Gamma$, $g$ is differentiable at $x$, so by Proposition~\ref{prop:notation_prox_operator}, we have $x = \Prox{\gamma g}{x + \gamma \nabla g(x)} \in \Prox{\gamma g}{\R^d}$. So we get $\Gamma \subset \Prox{\gamma g}{\R^d}$. Therefore, we can deduce that $\text{Leb}(\text{int}(\text{dom}(g)) \setminus \Prox{\gamma g}{\R^d}) = 0$.

Combining the fact that $\text{Leb}(\text{dom}(g) \setminus \text{int}(\text{dom}(g))) = 0$ and $\text{Leb}(\text{int}(\text{dom}(g)) \setminus \Prox{\gamma g}{\R^d}) = 0$, we get that $\text{Leb}(\text{dom}(g) \setminus \Prox{\gamma g}{\R^d}) = 0$, which shows equation~\eqref{eq:leb_measure_dom_image_nulle}.

Moreover for $x \in \R^d$ and $y \in \text{dom}(g)$, by definition of the proximal operator, we get $g(\Prox{\gamma g}{x}) + \frac{1}{2\gamma}\|x - \Prox{\gamma g}{x}\|^2 \le g(y) + \frac{1}{2\gamma}\|x - y\|^2$. So $g(\Prox{\gamma g}{x}) < + \infty$, which gives  $\Prox{\gamma g}{\R^d} \subset \text{dom}(g)$. Because $\text{dom}(g)$ is convex, we get that $\text{Conv}(\Prox{\gamma g}{\R^d}) \subset \text{dom}(g)$. Combined with~\eqref{eq:leb_measure_dom_image_nulle}, we obtain  equation~\eqref{eq:leb_measure_conv_image_nulle}.
\end{proof}

\section{Numerical examples}
In this section, we present various examples of weakly convex functions and their associated Moreau envelope. 

\begin{example}\label{ex:non_convex_function}
We first study the function
\begin{align*}
    h(x) = \left\{
    \begin{array}{ll}
        \frac{1}{4} - \frac{1}{2} x^2 & \mbox{if } |x| \le \frac{1}{2} \\
        \frac{1}{2} (|x| - 1)^2 & \mbox{otherwise.}
    \end{array}
\right.
\end{align*}
$h$ is $\mathcal{C}^1$ and $1$-weakly convex and not convex. After some computation, we obtain that, for $\gamma \in (0, 1)$
\begin{align*}
    h^\gamma(x) &= \left\{
    \begin{array}{ll}
        \frac{1}{4} - \frac{x^2}{2(1 - \gamma)} & \mbox{if } |x| \le \frac{1- \gamma}{2}\\
        \frac{(|x| - 1)^2}{2(1+\gamma)} & \mbox{otherwise.}
    \end{array}
\right. \\
\Prox{\gamma h}{x} &= \left\{
    \begin{array}{ll}
        \frac{x}{1 - \gamma} & \mbox{if } |x| \le \frac{1- \gamma}{2} \\
        \frac{x + \gamma}{1+\gamma} & \mbox{if } x \ge \frac{1- \gamma}{2} \\
        \frac{x - \gamma}{1+\gamma} & \mbox{otherwise.}
    \end{array}
\right.
\end{align*}

On Figure~\ref{fig:an_example}, we present the graph of the function $h$ and $h^\gamma$ for some $\gamma \in (0, 1)$, as well as the corresponding proximal operators. We observe that $h^\gamma$ is weakly convex but not convex. Moreover, the critical points and the global minimum are preserved.

\begin{figure}[!ht]
\centering
\includegraphics[width=\textwidth]{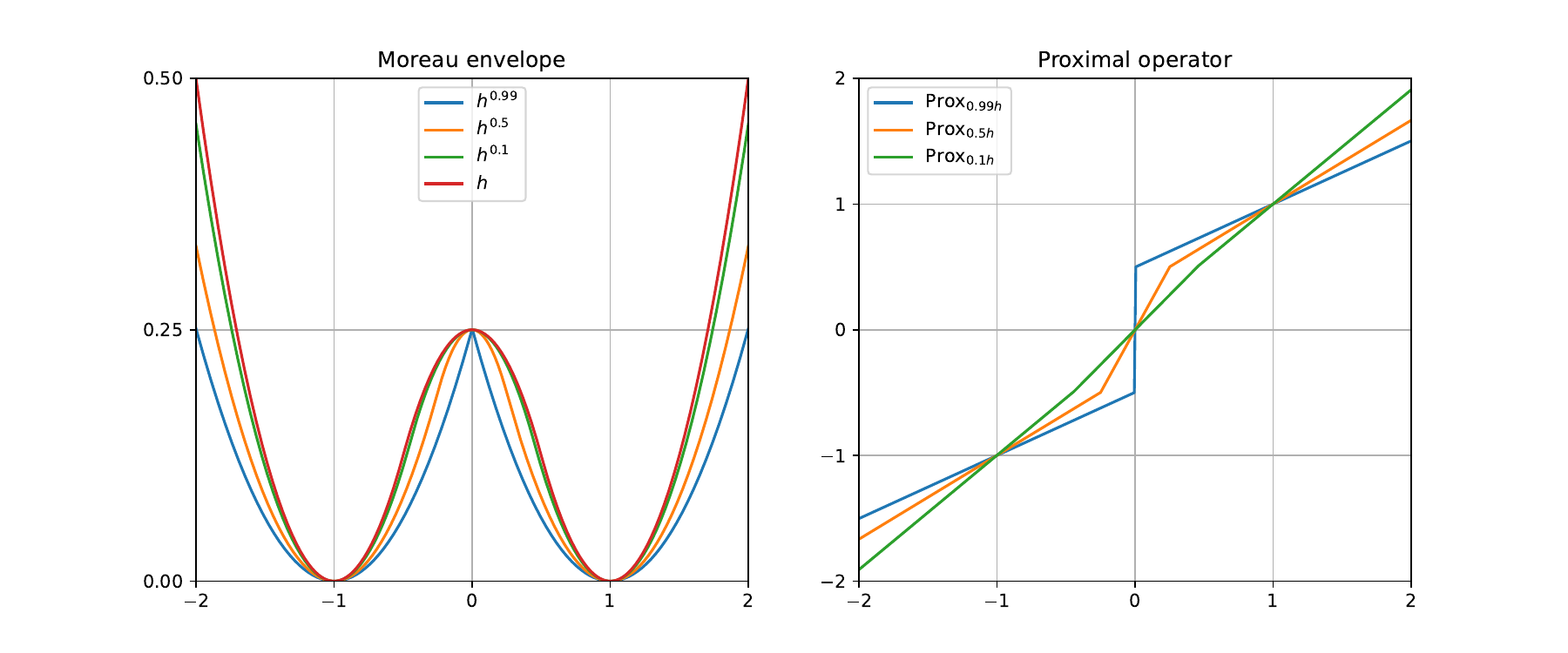}
\caption{Graph of the function $h$, $h^\gamma$ for $\gamma \in \{0.1, 0.5, 0.99\}$ and their associated proximal operators.}
\label{fig:an_example}
\end{figure}
\end{example}

\begin{example}\label{ex:dis_function}
We study the function
\begin{align*}
    f(x) = \left\{
    \begin{array}{ll}
        \frac{1}{2}(x + 1)^2 & \mbox{if } x \le -\frac{1}{2} \\
        \frac{1}{4} - \frac{1}{2} x^2 & \mbox{if } x \in [-\frac{1}{2}, \frac{1}{4}] \\
        \frac{3}{16} + \frac{1}{2}(x - \frac{1}{2})^2 & \mbox{otherwise.}
    \end{array}
\right.
\end{align*}
$f$ is $\mathcal{C}^1$ and $1$-weakly convex and not convex with an unique minimum global and three critical points. Its Moreau envelope writes, for $\gamma \in (0, 1)$
\begin{align*}
    f^\gamma(x) &= \left\{
    \begin{array}{ll}
        \frac{(x + 1)^2}{2(1+\gamma)} & \mbox{if } x \le -\frac{1-\gamma}{2} \\
        \frac{1}{4} - \frac{x^2}{2(1 - \gamma)} & \mbox{if } x \in [-\frac{1-\gamma}{2}, \frac{1-\gamma}{4}] \\
        \frac{3}{16} + \frac{(x - \frac{1}{2})^2}{2(1+\gamma)} & \mbox{otherwise.}
    \end{array}
\right. \end{align*}
The corresponding proximal operator is
\begin{align*}
\Prox{\gamma f}{x} &= \left\{
    \begin{array}{ll}
        \frac{x - \gamma}{1+\gamma} & \mbox{if } x \le -\frac{1-\gamma}{2} \\
        \frac{x}{1 - \gamma} & \mbox{if } x \in [-\frac{1-\gamma}{2}, \frac{1-\gamma}{4}] \\
        \frac{x +\frac{\gamma}{2}}{1+\gamma} & \mbox{otherwise.}
    \end{array}
\right.
\end{align*}

On Figure~\ref{fig:example_dis}, we present the graph of the function $f$ and $f^\gamma$ for some $\gamma \in (0, 1)$. We observe that $f^\gamma$ is weakly convex but not convex. Moreover, the critical points and the global minimum are preserved.

\begin{figure}[!ht]
\centering
\includegraphics[width=\textwidth]{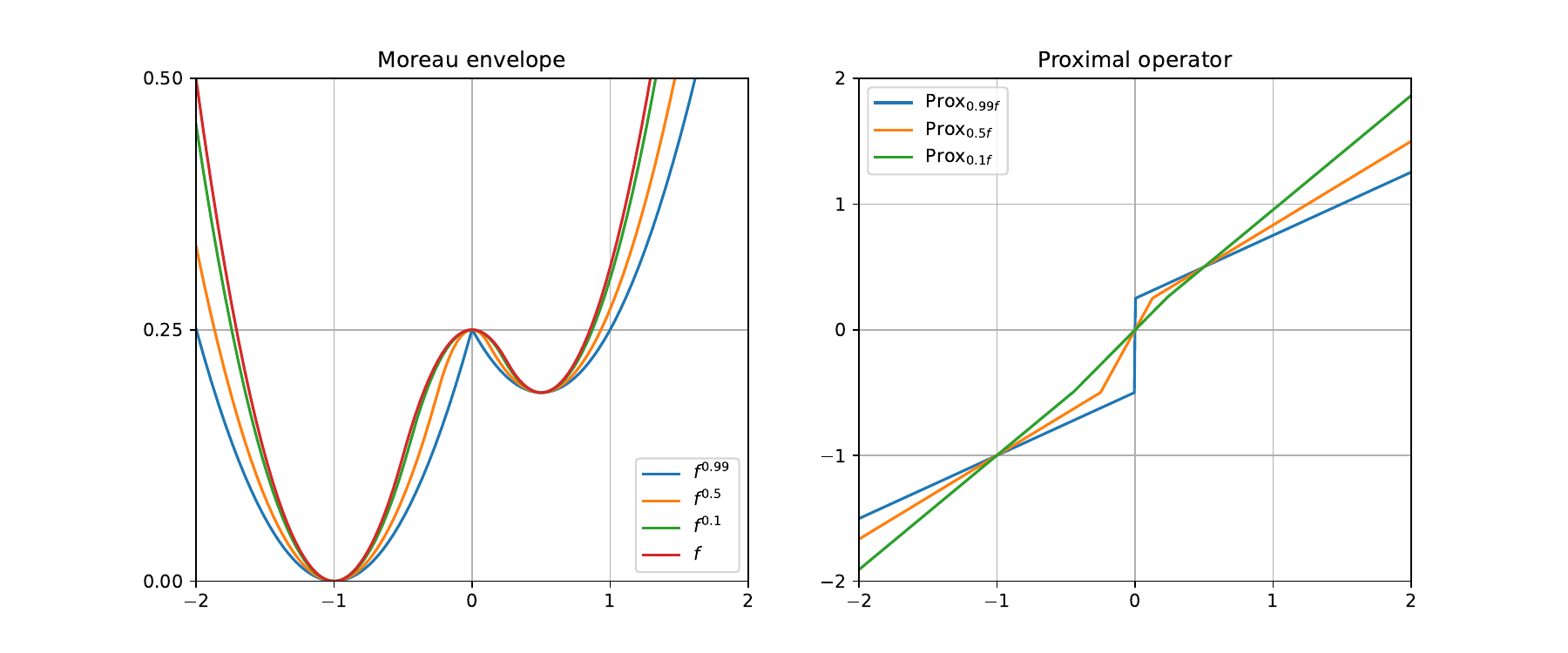}
\caption{Graph of the function $f$, $f^\gamma$ for $\gamma \in \{0.1, 0.25, 0.49\}$ and their associated proximal operators.}
\label{fig:example_dis}
\end{figure}
\end{example}

\begin{example}\label{ex:exp_non_convex}
As a third example, we study the function
\begin{align*}
    u(x) = \frac{1}{2}\left(e^{-x} - x^2\right)
\end{align*}
$u$ is $\mathcal{C}^{\infty}$ and $1$-weakly convex and not convex. After some computation, we obtain that, for $\gamma \in (0, 1)$
\begin{align*}
    u^\gamma(x) &= \frac{1-\gamma}{\gamma} \left(W\left(\frac{\gamma}{2(1-\gamma)}e^{-\frac{x}{1-\gamma}}\right) +\frac{1}{2}W^2\left(\frac{\gamma}{2(1-\gamma)}e^{-\frac{x}{1-\gamma}}\right) \right) - \frac{x^2}{2(1-\gamma)}\\
    \Prox{\gamma u}{x} &= \frac{x}{1 - \gamma} + W\left(\frac{\gamma}{2(1-\gamma)}e^{-\frac{x}{1-\gamma}} \right),
\end{align*}
where $W$ is the Lambert function defined by $W(x)e^{W(x)} = x$.

In this case, $u^{\gamma}$ is $\mathcal{C}^{\infty}$, after computations we get
\begin{align*}
    \frac{\partial u^\gamma}{\partial x}(x) &= -\frac{1}{1-\gamma}\left(x + \frac{1-\gamma}{\gamma} W\left( \frac{\gamma}{2(1-\gamma)} e^{-\frac{x}{1-\gamma}}\right) \right) \\
    \frac{\partial^2 u^\gamma}{\partial^2 x}(x) &= \frac{1}{1-\gamma}\left(\frac{2}{\gamma} \frac{W\left( \frac{\gamma}{2(1-\gamma)} e^{-\frac{x}{1-\gamma}}\right)}{1 + W\left( \frac{\gamma}{2(1-\gamma)} e^{-\frac{x}{1-\gamma}}\right)} - 1\right).
\end{align*}
We see that $\frac{\partial^2 u^\gamma}{\partial^2 x} \ge - \frac{1}{1 - \gamma}$, so that $u^\gamma$ is $\frac{1}{1 - \gamma}$-weakly convex, confirming the result of Lemma~\ref{lemma:moreau_envelop_weakly_convex}.

On Figure~\ref{fig:example_exp_non_cvx}, we present the graph of the function $u$ and $u^\gamma$ for some $\gamma \in (0, 1)$. We observe that $u^\gamma$ is weakly convex but not convex. Moreover, the critical points are preserved.

\begin{figure}[!ht]
\centering
\includegraphics[width=\textwidth]{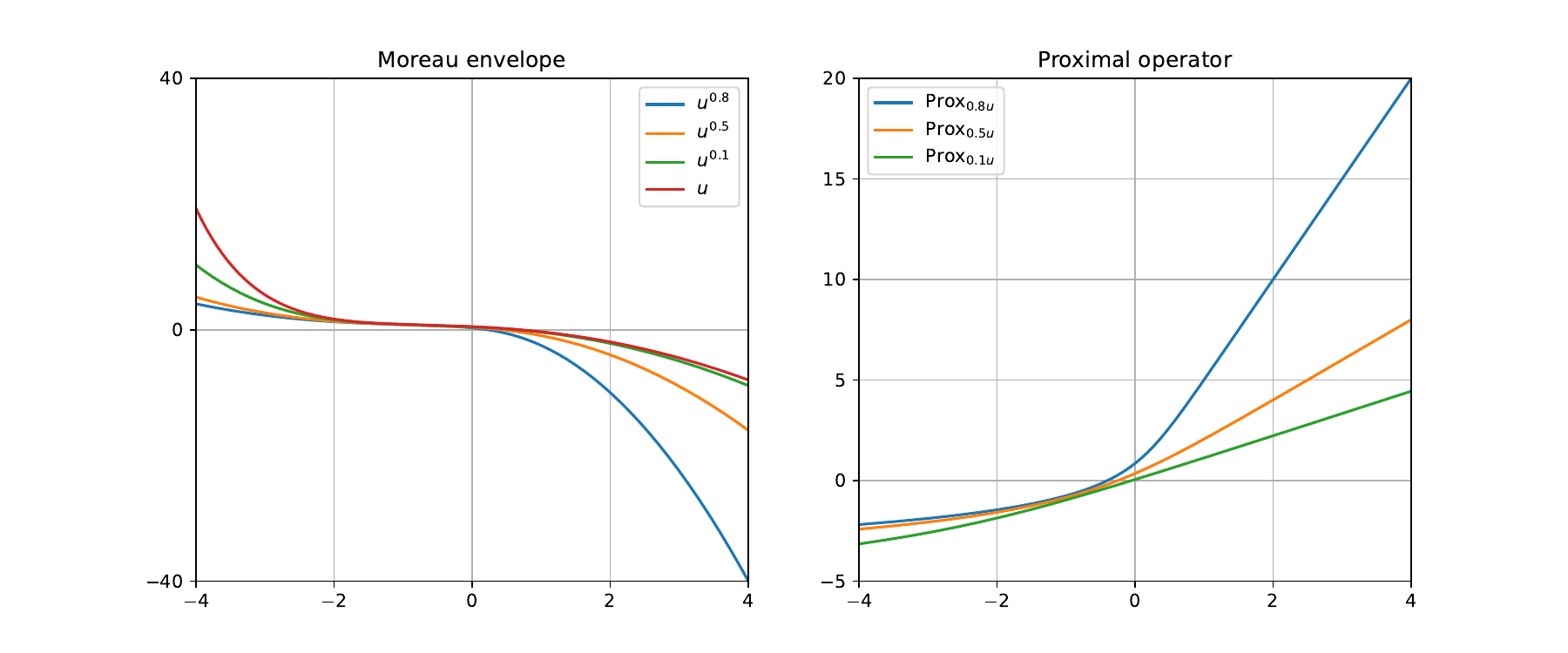}
\caption{Graph of the function $u$, $u^\gamma$ for $\gamma \in \{0.1, 0.5, 0.8\}$ and their associated proximal operators.}
\label{fig:example_exp_non_cvx}
\end{figure}
\end{example}

\begin{example}\label{ex:periodic}
We now introduce the function $\phi$ for $x \in \R$ by
\begin{align}\label{eq:def_phi}
    \phi(x) = \left| x - 2 \lfloor \frac{x+1}{2} \rfloor \right| \in [0,1]
\end{align}
and then  study the function
\begin{align*}
    v(x) = \left\{
    \begin{array}{ll}
        \frac{1}{2}\phi(x) & \mbox{if } \phi(x) \in [0,\frac{1}{2}] \\
        \frac{3}{8} - \frac{1}{2}(\phi(x) - 1)^2 & \mbox{otherwise.}
    \end{array}
    \right.
\end{align*}
The function $v$ is non differentiable on the set $\{2n | n \in \N\}$ and $1$-weakly convex and not convex.
Moreover $v$ is $2$-periodic and even. By its definition in equation~\eqref{eq:moreau_env}, it is clear that the Moreau envelope is also periodic and even. It writes, for $\gamma \in (0, 1)$
\begin{align*}
    v^\gamma(x) &= \left\{
    \begin{array}{ll}
        \frac{\phi^2(x)}{2\gamma} & \mbox{if } \phi(x) \in [0,\frac{\gamma}{2}] \\
        \frac{1}{2}\phi(x) - \frac{\gamma}{8} & \mbox{if } \phi(x) \in [\frac{\gamma}{2}, \frac{1+\gamma}{2}] \\
        \frac{3}{8} - \frac{(\phi(x) - 1)^2}{2(1 - \gamma)}  & \mbox{otherwise}
    \end{array}
    \right.\\
    \end{align*}
and its proximal operator is obtained as
    \begin{align*}
    \Prox{\gamma v}{x} &= \left\{
    \begin{array}{ll}
        0 & \mbox{if } \phi(x) \in [0,\frac{\gamma}{2}] \\
        \phi(x) - \frac{\gamma}{2} & \mbox{if } \phi(x) \in [\frac{\gamma}{2}, \frac{1+\gamma}{2}] \\
        \frac{\phi(x) - \gamma}{1 - \gamma}  & \mbox{otherwise.}
    \end{array}
    \right.
\end{align*}

On Figure~\ref{fig:example_periodic}, we present the graph of the function $v$ and $v^\gamma$ for some $\gamma \in (0, 1)$. We observe that $v^\gamma$ is periodic, pair, weakly convex but not convex. Moreover, the critical points and global minimizers are preserved.

\begin{figure}[!ht]
\centering
\includegraphics[width=\textwidth]{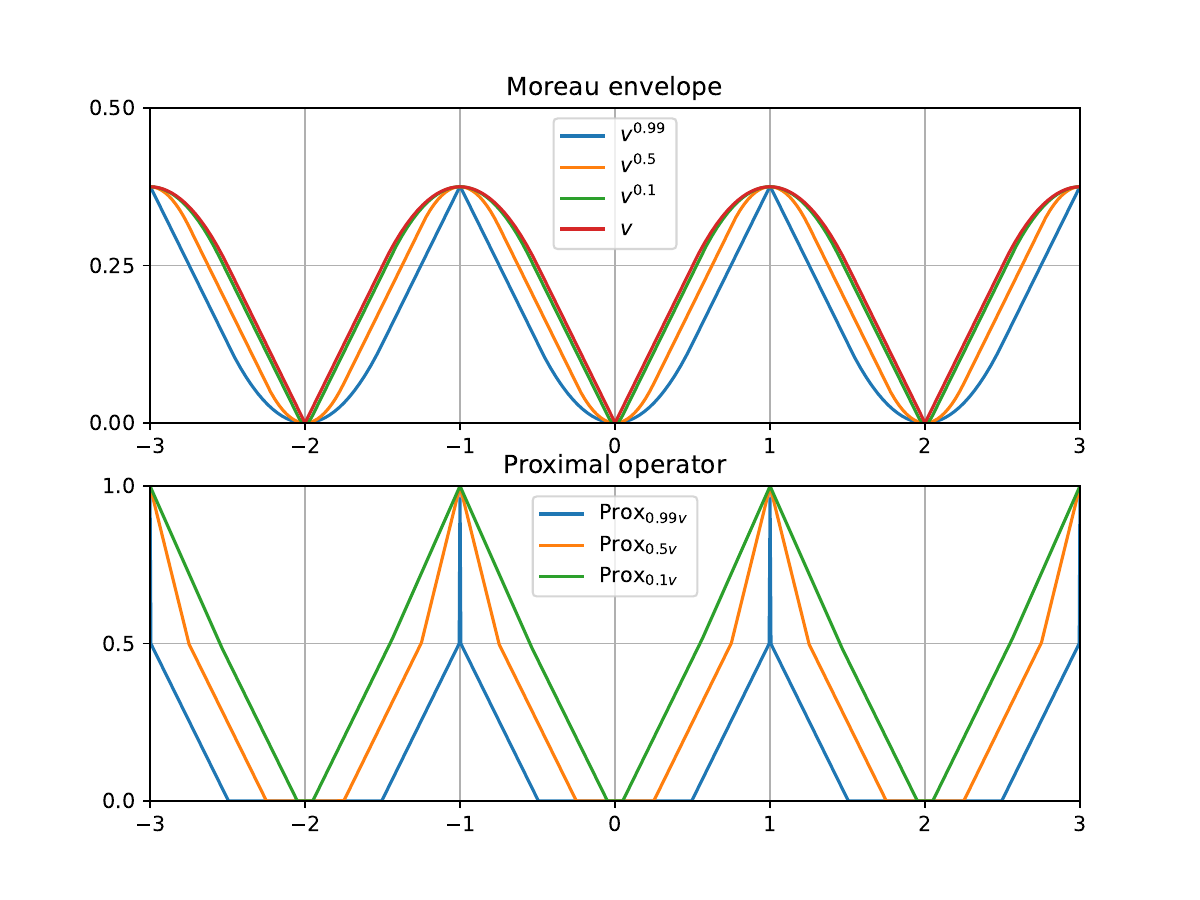}
\caption{Graph of the function $v$, $v^\gamma$ for $\gamma \in \{0.1, 0.5, 0.99\}$ and associated proximal operators.}
\label{fig:example_periodic}
\end{figure}
\end{example}

\begin{example}\label{ex:periodic_non_diff}
We now study the function $\psi = \frac{1}{2}\phi$ introduced in Equation~\eqref{eq:def_phi}
The function $\psi$ is non differentiable on the set $\{n | n \in \N\}$ and not weakly convex.
Moreover $\psi$ is $2$-periodic and even. Therefore the Moreau envelope is also $2$-periodic and even. It writes, for $\gamma \in (0, 1)$
\begin{align*}
    \psi^\gamma(x) &= \left\{
    \begin{array}{ll}
        \frac{\phi^2(x)}{2\gamma} & \mbox{if } \phi(x) \in [0,\frac{\gamma}{2}] \\
        \frac{1}{2}\phi(x) - \frac{\gamma}{8} & \mbox{otherwise}
    \end{array}
    \right.\\
    \end{align*}
and its proximal operator is obtained as
    \begin{align*}
    \Prox{\gamma \psi}{x} &= \left\{
    \begin{array}{ll}
        0 & \mbox{if } \phi(x) \in [0,\frac{\gamma}{2}] \\
        \phi(x) - \frac{\gamma}{2} & \mbox{otherwise.}
    \end{array}
    \right.
\end{align*}

On Figure~\ref{fig:example_periodic_non_diff}, we present the graph of the function $\psi$ and $\psi^\gamma$ for some $\gamma \in (0, 1)$. We observe that $\psi^\gamma$ is periodic, pair but not differentiable. Moreover, the critical points and global minimizers are preserved.

\begin{figure}[!ht]
\centering
\includegraphics[width=\textwidth]{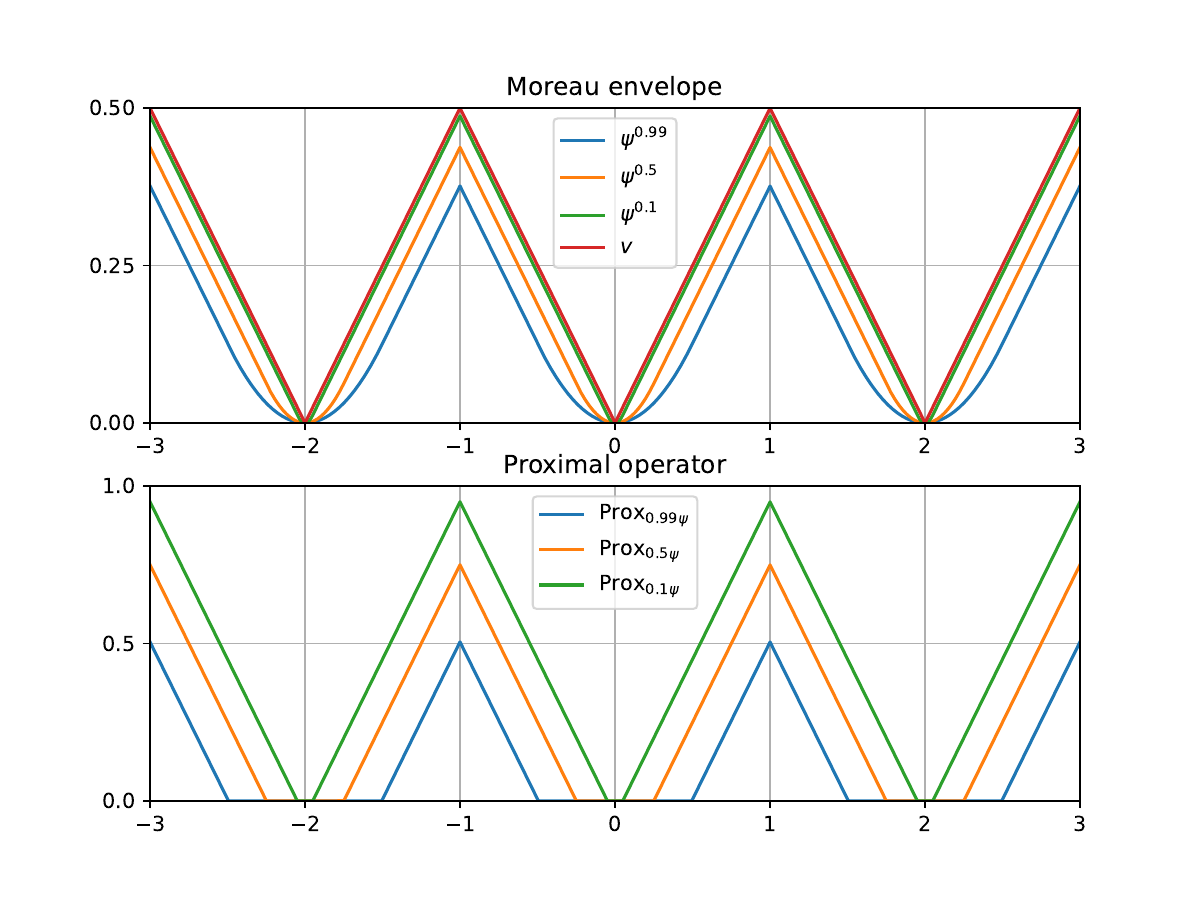}
\caption{Graph of the function $\psi$, $\psi^\gamma$ for $\gamma \in \{0.1, 0.5, 0.99\}$ and associated proximal operators.}
\label{fig:example_periodic_non_diff}
\end{figure}
\end{example}

\begin{example}
In many inverse problem~\cite{chen2001atomic,candes2005decoding}, the restoration task is reformulated as an optimization problem of the form
\begin{align}\label{eq:inv_pb}
    \inf_{x \in \R^d} \|\mathcal{A}(x) - y\|_1 + \phi(x),
\end{align}
with $\phi : \R^d \to \R$ a smooth regularization function, $\mathcal{A}: \R^d \to \R^m$ a degradation operator and $y \in \R^m$ an observation.

However, the previous objective function is not smooth. Therefore, the authors of~\cite{slavakis2011robust,chen2014moreau} propose to replace the convex $g = \|\cdot\|_1$ function by its Moreau envelope $g^\gamma = \frac{1}{\gamma}L_{\gamma}$, with $L_\gamma$ the Hubert loss~\cite{huber1992robust} defined by
\begin{align*}
    L_{\gamma}(x) &= \left\{
    \begin{array}{ll}
        \frac{1}{2}\|x\|_2^2 & \mbox{if } |x|<\gamma\\
        \gamma (\|x\|_1 - \frac{d\gamma}{2}) & \mbox{otherwise,}
    \end{array}
    \right.
\end{align*}
where $d$ is the dimension of the space.
It allows to run gradient descent algorithms to approximate solution of problem~\eqref{eq:inv_pb}. 
The Moreau envelop has also been proposed in the same context for smoothing convex constraints~\cite{yu2017moreau}
\end{example}

\begin{example}
A well-established line of research~\cite{roberts1996exponential} use Langevin algorithm to sample a target distribution defined by
\begin{align*}
    \pi \propto e^{-f-g},
\end{align*}
with $f$ a data-fidelity term and $g$ a weakly-convex regularization. 
Such an algorithm can be written as
\begin{align*}
    x_{k+1} = x_k - \delta \nabla f(x_k) - \delta \nabla g(x_k) + \sqrt{2\delta}z_{k+1},
\end{align*}
with $z_{k+1} \sim \mathcal{N}(0, I_d)$.
However, in the case of non-differentiable regularization, this algorithm can not be run. Therefore, $g$ is approximated by $g^\gamma$~\cite{pereyra2016proximal,durmus2018efficient,luu2021sampling, renaud2025stability,crucinio2025optimal,habring2025diffusion}, and thanks to Proposition~\ref{prop:nabla_g_weakly_cvx_appendix}, the previous algorithm can be written as
\begin{align*}
    x_{k+1} = x_k - \delta \nabla f(x_k) - \frac{\delta}{\gamma}\left( x_k - \Prox{\gamma g}{x_k} \right) + \sqrt{2\delta}z_{k+1}.
\end{align*}
This type of algorithm has recently shown state-of-the art performances for sampling image posterior distributions~\cite{renaud2025stability}.
\end{example}

\section{Conclusion}
In this document, we have presented the main properties of the Moreau envelope for weakly convex function. We show that the Moreau envelope is regular in the sense that it is differentiable with respect to $\gamma \in (0, \frac{1}{\rho})$ and  $x \in \R^d$, and that its derivatives can be explicitly computed. We highlight the intrinsic connections between the Moreau envelope and the convex conjugate. Moreover, we show that the Moreau envelope preserves key objects of interest in optimization, namely the minimizers and the critical points. Moreover, we study the second order properties and demonstrate that the image of the proximal operator is almost convex.

\section*{Acknowledgements}
This study has been carried out with financial support from the French Direction G\'en\'erale de l’Armement.
We thanks Olivier Ley and Antonin Chambolle for the useful discussions and pointing us some references.

\bibliography{ref}
\bibliographystyle{abbrv}

\end{document}